\newtheorem{thm}{Theorem}[section]
\newtheorem{cor}[thm]{Corollary}
\newtheorem{lemma}[thm]{Lemma}
\theoremstyle{definition}
\newtheorem{defi}[thm]{Definition}
\newtheorem{ex}[thm]{Example}
\newtheorem{nota}[thm]{Notation}
\newtheorem{setup}[thm]{Setup}
\newtheorem{const}[thm]{Construction}
\theoremstyle{remark}
\newtheorem{rem}[thm]{Remark}
\begin{document}

\title[the cover ideals of clique-whiskered graphs]{On minimal free resolutions of the cover ideals of clique-whiskered graphs}

\author{Yuji Muta and Naoki Terai}

\address[Yuji Muta]{Department of Mathematics, Okayama University, 3-1-1 Tsushima-naka, Kita-ku, Okayama 700-8530 Japan.}

\email{p8w80ole@s.okayama-u.ac.jp}

\address[Naoki Terai]{Department of Mathematics, Okayama University, 3-1-1 Tsushima-naka, Kita-ku, Okayama 700-8530, Japan.}

\email{terai@okayama-u.ac.jp}


\begin{abstract}
We explicitly construct a minimal free resolution of the cover ideals of clique-whiskered graphs, which were introduced in \cite[Section 3]{cn}. In particular, Cohen--Macaulay chordal graphs, clique corona graphs, and Cohen--Macaulay Cameron--Walker graphs are examples of clique-whiskered graphs. We also introduce multi-clique-whiskered graphs as a generalization of both clique-whiskered graphs and multi-whisker graphs. We prove that multi-clique-whiskered graphs are vertex decomposable and hence sequentially Cohen--Macaulay. Moreover, we provide formulas for the projective dimension and the Castelnuovo--Mumford regularity of their edge ideals. Finally, we construct minimal free resolutions of the cover ideals of both multi-clique-whiskered graphs and very well-covered graphs.
\end{abstract}


\subjclass[2020]{Primary: 13F55, 13H10, 05C75; Secondary: 13D45, 05C90, 55U10}


\keywords{Clique-whiskered graph, clique corona graph, chordal graph, Cameron--Walker graph, multi-clique-whiskered graph, very well-covered graph, edge ideal, cover ideal, minimal free resolution, Betti number, regularity, local cohomology module, vertex decomposable, sequentially Cohen--Macaulay}


\maketitle


\section{Introduction}
Studying the minimal free resolutions of monomial ideals in polynomial rings plays an important role in combinatorial commutative algebra. These resolutions contain a wealth of information and provide several invariants in homological algebra. However, explicitly constructing a minimal free resolution remains a particularly challenging problem. Many researchers have been constructing minimal free resolutions of monomial ideals \cite{ahh, bps, bs, c, cf, j, ek, hh, h1, kty, mm, y}.

Throughout, we assume that graphs mean finite simple graphs, namely, finite undirected graphs without loops or multiple edges and let $\Bbbk$ be an arbitrary field. Let $G=(V(G), E(G))$ be a graph, where $V(G)$ denotes the vertex set of $G$, $E(G)$ denotes the edge set of $G$, and $\Bbbk[V(G)]=\Bbbk[x_1,\ldots, x_{n}]$ be a polynomial ring over $\Bbbk$. Then the {\it edge ideal}, denoted by $I(G)$, is the ideal  of $\Bbbk[V(G)]$ defined by$$I(G)=(x_{i}x_{j}\,\,: \{x_{i}, x_{j}\}\in E(G)).$$This ideal was first introduced by Villarreal in his 1990 paper \cite{v1}. Also, the {\it cover ideal} of $G$, denoted by $J(G)$, is the ideal defined by $$J(G)=(x_{i_{1}}\cdots x_{i_{s}}\,\,:\{i_{1},\ldots, i_{s}\}\mbox{ is a vertex cover of }G).$$Cover ideals correspond to the Alexander dual ideals of edge ideals. 

In this paper, we treat clique-whiskered graphs, which were first introduced by Cook and Nagel in 2012 in the context of combinatorial commutative algebra \cite[Section 3]{cn}. One can see that Cohen--Macaulay chordal graphs, clique corona graphs and Cohen--Macaulay Cameron--Walker graphs are clique-whiskered graphs (see Lemmas \ref{chordal}, \ref{corona}, \ref{CW}).  

For cover ideals, Herzog and Hibi provided an explicit minimal free resolution of Cohen--Macaulay bipartite graphs \cite[Theorem 2.1]{hh}. Later, Mohammadi and Moradi studied a minimal free resolution of unmixed bipartite graphs \cite{mm}. Moreover, Kimura, Terai and Yassemi successfully constructed an explicit minimal free resolution of Cohen--Macaulay very well-covered graphs \cite[Theorem 3.2]{kty}. This result was reproved by Crupi and Ficarra using a technique known as Betti splittings \cite[Theorem 3.5]{cf}. In particular, it is known that whisker graphs are Cohen--Macaulay very well-covered graphs, and thus, an explicit minimal free resolution of whisker graphs can also be determined. In section \ref{resol of clique}, we construct explicitly a minimal free resolution of cover ideals of clique-whiskered graphs (see Theorem \ref{min}). This result leads us to give the graded Betti numbers of the cover ideals of clique-whiskered graphs, and hence we get the graded Betti numbers of the cover ideals of Cohen--Macaulay chordal graphs, clique corona graphs and Cohen--Macaulay Cameron--Walker graphs respectively. Also, we obtain formulas for the Hilbert series of the local cohomology modules of residue rings modulo edge ideals associated with these graphs. 

In section \ref{multi}, we define multi-clique-whiskered graphs as a generalization of clique-whiskered graphs. It is also a generalization of multi-whisker graphs, which were first introduced by Pournaki and the authors of this paper \cite[Section 1]{mpt} in the context of combinatorial commutative algebra. We show that multi-clique-whiskered graphs are vertex decomposable and hence sequentially Cohen--Macaulay, which is a generalization of \cite[Theorem 3.3]{cn} and \cite[Theorem 6.1]{mpt} (see Theorem \ref{seq CM}). Also, we show that the regularity of edge ideals of multi-clique-whiskered graphs is equal to the its induced matching number (see Corollary \ref{reg of multi}).  Moreover, we construct explicit minimal free resolutions of cover ideals of multi-clique-whiskered graphs and very well-covered graphs (see Theorems \ref{min resol2}, \ref{min resol3}). The cover ideals of these graphs do not necessarily admit linear resolutions. As far as we know, an explicit construction of the minimal free resolution has not yet been given in such cases. This result leads us to a formula for the Hilbert series of the local cohomology modules of residue rings modulo edge ideals of multi-whisker graphs \cite[Theorem 4.1]{mpt}, as well as to a different form of the Hilbert series of the local cohomology modules of residue rings modulo edge ideals of very well-covered graphs, as presented in \cite[Theorem 4.4]{kpty}.


\section{Preliminaries}
In this section, we recall several definitions and known results from graph theory and combinatorial commutative algebra that will be used throughout this paper. We refer the reader to \cite{bh, s1, v0, v2} for the detailed information on combinatorial and algebraic background. 

\subsection{Combinatorial background.}
Let $G=(V(G), E(G))$ be a graph, where $V(G)$ denotes the vertex set and $E(G)$ denotes the edge set of $G$. For a subset $W$ of $V(G)$, we denote the induced subgraph on $W$ by $G|_{W}$. A subset $C$ of $V(G)$ is called {\it vertex cover}, if $C\cap e\neq\emptyset$ for every edge $e$ of $G$. In particular, if $C$ is a vertex cover that is minimal with respect to inclusion, then $C$ is called a {\it minimal vertex cover} of $G$. Let ${\rm Min}(G)$ be the set of minimal vertex covers of $G$. A subset $A$ of $V(G)$ is called an {\it independent set}, if no two vertices in $A$ are adjacent to each other. In particular, if $A$ is an independent set of $G$ that is maximal with respect to inclusion, then $A$ is called a {\it maximal independent set} of $G$. Next, let us recall the definition of Stanley--Reisner rings. Let $n$ be a positive integer and let $[n]=\{1,\ldots, n\}$. A {\it simplicial complex }$\Delta$ on $[n]$ is a nonempty subset of the power set $2^{[n]}$ of $[n]$ such that $G\in\Delta$, if $G\subset F$ and $F\in \Delta$. An element of $\Delta$ is called face. For a subset $W$ of $[n]$, we set $$\Delta_{W}=\{F\in\Delta\,\,: F\subset W\}.$$Also, the {\it Alexander dual complex} of $\Delta$, denoted by $\Delta^{\vee}$, defined as$$\Delta^{\vee}=\{F\in 2^{[n]}\,\,: [n]\setminus F\notin\Delta\}.$$The {\it Stanley--Reisner ideal} $I_{\Delta}$ of $\Delta$ is the ideal generated by squarefree monomials corresponding to minimal non-faces of $\Delta$, namely,$$I_{\Delta}=(x_{i_{1}}\cdots x_{i_{r}}\,\,: \{i_{1},\ldots, i_{r}\}\mbox{ is a minimal non-face of }\Delta).$$
Also, $\Bbbk[\Delta]=\Bbbk[x_{1},\ldots, x_{n}]/I_{\Delta}$ is called the {\it Stanley--Reisner ring} of $\Delta$. For simplicial complexes $\Delta$ and $\Delta^{\prime}$ whose vertex sets are disjoint, the {\it join} $\Delta*\Delta^{\prime}$ is the simplicial complex whose faces $F\cup F^{\prime}$, where $F\in\Delta$ and $F^{\prime}\in\Delta^{\prime}$. Let $\Delta$ be a $(d-1)$-dimensional simplicial complex and let $f_{i}$ be the number of faces of $\Delta$ of dimension $i$. Then the sequence $f(\Delta)=(f_{0}, f_{1},\ldots, f_{d})$ is called the $f$-{\it vector} of $\Delta$. By setting $f_{-1}=1$, the $h$-{\it vector} $h(\Delta)=(h_{0}, h_{1},\ldots, h_{d})$ of $\Delta$ is defined by the formula $$\displaystyle\sum_{0\leq i\leq d}f_{i-1}(t-1)^{d-i}=\displaystyle\sum_{0\leq i\leq d}h_{i}t^{d-i}.$$Moreover, the polynomial $h(\Delta, t)=\sum_{0\leq i\leq d}h_{i}t^{i}$ is called the $h$-{\it polynomial} of $\Delta$. It is well-known that the Hilbert series of $\Bbbk[\Delta]$ can be described as $$F(\Bbbk[\Delta], t)=\dfrac{h(\Delta, t)}{(1-t)^{d}}.$$In particular, we see that $h(\Delta*\Delta^{\prime}, t)=h(\Delta, t)\times h(\Delta^{\prime}, t)$. For the relationship between edge ideals and Stanley--Reisner ideals, we have the following notion called an independence complex. The {\it independence complex} of $G$ is the set of independent sets of $G$, which forms a simplicial complex $\Delta(G)$. It is known that the edge ideal of $G$ coincides with the Stanley--Reisner ideal of $\Delta(G)$ (see, for example, \cite[p. 73, Lemma 31]{v0}). A subset $W$ of  $V(G)$ is called a {\it clique}, if the induced subgraph $G|_{W}$ is a complete graph. If $W$ is maximal with respect to inclusion, then we call $W$ a {\it maximal clique} of $G$. A subset $M$ of $E(G)$ is called a {\it matching}, if no two  edges in $M$ share a common vertex. Then the {\it matching number} of $G$, denoted by ${\rm m}(G)$ is defined by $${\rm m}(G)=\max\{|M|\,\,: M\mbox{ is a matching of }G\}.$$ Moreover, if there is no edge in $E(G)\setminus M$ that is contained in the union of edges of $M$, then $M$ is called an {\it induced matching}. Then, the {\it induced matching number} of $G$, denoted by ${\rm im}(G)$, is defined as $${\rm im}(G)=\max\{|M|\,\,: M\mbox{ is an induced matching of }G\}.$$

\subsection{Algebraic background}
Let $S=\Bbbk[x_{1},\ldots, x_{n}]$ be a polynomial ring in $n$ variables over an arbitrary field $\Bbbk$ and let $M$ be a finitely generated graded $S$-module. Then, $M$ admits a {\it graded minimal free resolution} of the form $$0\rightarrow\bigoplus_{j\in\mathbb{Z}}S(-j)^{\beta_{p,j}(M)}\rightarrow\cdots\rightarrow\bigoplus_{j\in\mathbb{Z}}S(-j)^{\beta_{0,j}(M)}\rightarrow M\rightarrow 0,$$
where $S(-j)$ is the graded $S$-module with grading $S(-j)_{k}=S_{-j+k}$. The number $\beta_{i,j}(M)$ is called the $(i,j)$-th {\it graded Betti number} of $M$. The {\it Castelnuovo--Mumford regularity} of $M$ is defined by $${\rm reg}\hspace{0.05cm}M=\max\{j-i\,\,:\beta_{i,j}(M)\neq0\}.$$Also, the {\it projective dimension} of $M$ is defined by $${\rm pd}\hspace{0.05cm}M=\max\{i\,\,:\beta_{i,j}(M)\neq0\mbox{ for some }j\}.$$It is known that the graded Betti numbers can be described in terms of simplicial homology for Stanley--Reisner rings, as shown by Hochster \cite[Theorem 5.1]{h}. Also, they can be described in terms of Alexander dual complex by using the result of Eagon and Reiner known as ``local Alexander duality''\cite[Proposition 1]{er}. 

\begin{thm}[\cite{h, er}]
For a simplicial complex $\Delta$ on the vertex set $[n]$, the $(i,j)$-th graded Betti number of $\Bbbk[\Delta]$ is given by 
$$\beta_{i,j}(\Bbbk[\Delta])=\displaystyle \sum_{\substack{W\subset[n], |W|=j}}{\rm dim}_{\Bbbk}\widetilde{H}_{j-i-1}(\Delta_{W};\Bbbk)$$Moreover, by using ``local Alexander duality'', we have $$\beta_{i,j}(\Bbbk[\Delta])=\displaystyle \sum_{\substack{F\in\Delta^{\vee}, |F|=n-j}}{\rm dim}_{\Bbbk}\widetilde{H}_{i-2}({\rm link}_{\Delta^{\vee}}F;\Bbbk)$$
\end{thm}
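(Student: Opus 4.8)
The plan is to establish the first (Hochster) formula by a direct multigraded computation of $\operatorname{Tor}$ via the Koszul complex, and then to deduce the second (Eagon--Reiner) formula from it by combinatorial Alexander duality. Write $S=\Bbbk[x_1,\dots,x_n]$ and recall that $\beta_{i,j}(\Bbbk[\Delta])=\dim_{\Bbbk}\operatorname{Tor}^{S}_{i}(\Bbbk[\Delta],\Bbbk)_{j}$. Since $I_{\Delta}$ is a squarefree monomial ideal, $\Bbbk[\Delta]$ carries a natural $\mathbb{Z}^{n}$-grading refining the standard $\mathbb{Z}$-grading, and this grading is inherited by $\operatorname{Tor}$. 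It therefore suffices to compute each fine-graded piece $\operatorname{Tor}^{S}_{i}(\Bbbk[\Delta],\Bbbk)_{\mathbf{a}}$ for $\mathbf{a}\in\mathbb{Z}^{n}$ and then to sum the contributions of total degree $j$.

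First I would resolve $\Bbbk$ by the Koszul complex $K_{\bullet}=K_{\bullet}(x_{1},\dots,x_{n})$ and compute $\operatorname{Tor}^{S}_{i}(\Bbbk[\Delta],\Bbbk)=H_{i}(\Bbbk[\Delta]\otimes_{S}K_{\bullet})$. Passing to the strand in a multidegree $\mathbf{a}$, one checks that nonzero contributions occur only when $\mathbf{a}=\mathbf{1}_{W}$ is the indicator vector of a subset $W\subseteq[n]$, because both $\Bbbk[\Delta]$ and the Koszul generators are squarefree. In homological degree $p$ a basis element $e_{\sigma}$ with $\sigma\subseteq W$ survives in multidegree $\mathbf{1}_{W}$ precisely when the monomial $\prod_{v\in W\setminus\sigma}x_{v}$ is nonzero in $\Bbbk[\Delta]$, that is, when $\tau:=W\setminus\sigma$ is a face of $\Delta_{W}$; here $\dim\tau=|W|-p-1$. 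The Koszul differential lowers $p$ by one and enlarges $\tau$ by one vertex, so it agrees up to sign with the simplicial coboundary of the (augmented, hence reduced) cochain complex of $\Delta_{W}$. Consequently, with $j=|W|$, one obtains $\operatorname{Tor}^{S}_{i}(\Bbbk[\Delta],\Bbbk)_{\mathbf{1}_{W}}\cong\widetilde{H}^{\,j-i-1}(\Delta_{W};\Bbbk)\cong\widetilde{H}_{j-i-1}(\Delta_{W};\Bbbk)$, the last identification being universal coefficients over a field. Summing over all $W$ with $|W|=j$ yields the first formula.

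For the second formula I would keep the bijection $W\leftrightarrow F=[n]\setminus W$, so that $|W|=j$ corresponds to $|F|=n-j$. The key combinatorial identity is that, as simplicial complexes on the vertex set $W$, one has $\operatorname{link}_{\Delta^{\vee}}([n]\setminus W)=(\Delta_{W})^{\vee}$, where the Alexander dual on the right is taken relative to $W$. This follows by unwinding the definitions: for $H\subseteq W$ one has $H\in\operatorname{link}_{\Delta^{\vee}}([n]\setminus W)$ iff $[n]\setminus\big(([n]\setminus W)\cup H\big)=W\setminus H\notin\Delta$, which is exactly the condition $H\in(\Delta_{W})^{\vee}$. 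Combinatorial Alexander duality over $\Bbbk$ on the $j$-element vertex set $W$ then gives $\widetilde{H}_{i-2}\big((\Delta_{W})^{\vee};\Bbbk\big)\cong\widetilde{H}^{\,j-(i-2)-3}(\Delta_{W};\Bbbk)=\widetilde{H}^{\,j-i-1}(\Delta_{W};\Bbbk)\cong\widetilde{H}_{j-i-1}(\Delta_{W};\Bbbk)$. Substituting this into the first formula produces the second. The terms with $F\notin\Delta^{\vee}$, equivalently $W\in\Delta$ so that $\Delta_{W}$ is a contractible simplex, contribute zero on both sides, so restricting the sum to $F\in\Delta^{\vee}$ is harmless.

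The step I expect to be the main obstacle is the Koszul-strand identification in the second paragraph: pinning down the homological shift and the reduced index $j-i-1$, verifying that the Koszul differential coincides up to sign with the simplicial coboundary of $\Delta_{W}$, and handling the augmentation carefully so that the empty face is counted and one genuinely obtains \emph{reduced} (co)homology in the low degrees. Once this computation is in place, the remaining ingredients---the combinatorial identity $\operatorname{link}_{\Delta^{\vee}}([n]\setminus W)=(\Delta_{W})^{\vee}$, combinatorial Alexander duality, and universal coefficients---are standard over a field and combine formally to give the stated formulas.
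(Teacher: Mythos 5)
This statement is quoted in the paper as known background from Hochster and Eagon--Reiner, with no proof supplied, so there is nothing internal to compare against. Your argument is the standard proof and is correct: the multigraded Koszul computation identifies $\operatorname{Tor}^{S}_{i}(\Bbbk[\Delta],\Bbbk)_{\mathbf{1}_{W}}$ with $\widetilde{H}^{\,|W|-i-1}(\Delta_{W};\Bbbk)$, and the identity $\operatorname{link}_{\Delta^{\vee}}([n]\setminus W)=(\Delta_{W})^{\vee}$ together with combinatorial Alexander duality converts the first formula into the second, with the $W\in\Delta$ terms correctly discarded on both sides.
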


Moreover, it is known that the description of the Hilbert series of local cohomology modules of Stanley--Reisner rings by Hochster. 

\begin{thm}[\cite{h}]
For a simplicial complex $\Delta$ on the vertex set $[n]$, then 
\begin{align*}
F(H^j_{\mathfrak{m}}(\Bbbk[\Delta]),t)
& =\displaystyle\sum_\ell\dim_\Bbbk[H^j_{\mathfrak{m}}(\Bbbk[\Delta] )]_\ell \ t^\ell \\[0.15cm]
& =\displaystyle \sum_{F \in \Delta} \dim_\Bbbk\widetilde{H}_{j-|F|-1} ({\rm link}_\Delta F; \Bbbk)\left(\frac{t^{-1}}{1-t^{-1}}\right)^{|F|},
\end{align*}
where $\mathfrak{m}$ is the unique homogeneous maximal ideal of $\Bbbk[x_{1},\ldots, x_{n}]$. 
\end{thm}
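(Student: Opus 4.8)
The plan is to prove the finely $\mathbb{Z}^{n}$-graded refinement of the formula and then collapse the $\mathbb{Z}^{n}$-grading to the single $\mathbb{Z}$-grading. Write $A=\Bbbk[\Delta]=S/I_{\Delta}$ with $S=\Bbbk[x_{1},\ldots,x_{n}]$ and $\mathfrak{m}=(x_{1},\ldots,x_{n})$. Since $\mathfrak{m}$ is generated by the variables, $H^{j}_{\mathfrak{m}}(A)$ is the cohomology of the $\mathbb{Z}^{n}$-graded \v{C}ech complex
$$\check{C}^{\bullet}\colon\quad 0\longrightarrow A\longrightarrow\bigoplus_{i}A_{x_{i}}\longrightarrow\bigoplus_{i_{1}<i_{2}}A_{x_{i_{1}}x_{i_{2}}}\longrightarrow\cdots\longrightarrow A_{x_{1}\cdots x_{n}}\longrightarrow 0,$$
whose differentials are homogeneous of degree $0$. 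Hence each multidegree strand $[\check{C}^{\bullet}]_{\mathbf{a}}$, for $\mathbf{a}\in\mathbb{Z}^{n}$, is a complex of finite-dimensional $\Bbbk$-vector spaces with $[H^{j}_{\mathfrak{m}}(A)]_{\mathbf{a}}=H^{j}([\check{C}^{\bullet}]_{\mathbf{a}})$, and the whole problem reduces to computing these cohomologies multidegree by multidegree.

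Next I would carry out the combinatorial identification of each strand. Fix $\mathbf{a}$, let $G_{\mathbf{a}}=\{i:a_{i}<0\}$ be its negative support and $P=\{i:a_{i}>0\}$ its positive support. A direct check of which monomials survive localization shows that the summand $[A_{x_{\sigma}}]_{\mathbf{a}}$ (with $x_{\sigma}=\prod_{i\in\sigma}x_{i}$) is one-dimensional exactly when $\sigma\in\Delta$, $G_{\mathbf{a}}\subseteq\sigma$ and $\sigma\cup P\in\Delta$, and is zero otherwise. When $P=\emptyset$, the surviving indices $\sigma$ are precisely the faces of $\Delta$ containing $G_{\mathbf{a}}$, which correspond bijectively -- after deleting $G_{\mathbf{a}}$ -- to the faces of ${\rm link}_{\Delta}G_{\mathbf{a}}$; tracking the differentials then identifies $[\check{C}^{\bullet}]_{\mathbf{a}}$, up to a homological shift by $|G_{\mathbf{a}}|$, with the augmented simplicial cochain complex of ${\rm link}_{\Delta}G_{\mathbf{a}}$. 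When $P\neq\emptyset$ I would choose $p\in P$ and build a contracting homotopy from the operation of adjoining and deleting $p$, showing that the strand is acyclic. Passing to cohomology and using that over a field reduced simplicial homology and cohomology have equal dimensions yields the fine formula: $[H^{j}_{\mathfrak{m}}(A)]_{\mathbf{a}}=0$ unless $\mathbf{a}\leq\mathbf{0}$ with $G_{\mathbf{a}}={\rm supp}(\mathbf{a})\in\Delta$, in which case
$$\dim_{\Bbbk}[H^{j}_{\mathfrak{m}}(A)]_{\mathbf{a}}=\dim_{\Bbbk}\widetilde{H}_{j-|G_{\mathbf{a}}|-1}({\rm link}_{\Delta}G_{\mathbf{a}};\Bbbk).$$

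Finally I would assemble the Hilbert series by summing over multidegrees. For a fixed face $F\in\Delta$, the multidegrees $\mathbf{a}$ with $G_{\mathbf{a}}={\rm supp}(\mathbf{a})=F$ and $\mathbf{a}\leq\mathbf{0}$ are exactly those with $a_{i}\leq-1$ for $i\in F$ and $a_{i}=0$ for $i\notin F$, each contributing the same dimension $\dim_{\Bbbk}\widetilde{H}_{j-|F|-1}({\rm link}_{\Delta}F;\Bbbk)$ in total degree $\ell=\sum_{i\in F}a_{i}$. Summing the geometric series coordinatewise gives $\prod_{i\in F}(t^{-1}+t^{-2}+\cdots)=\bigl(\tfrac{t^{-1}}{1-t^{-1}}\bigr)^{|F|}$, and collecting the contributions of all faces $F$ reproduces the claimed expression. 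The main obstacle is the middle step: pinning down the precise homological shift $j-|F|-1$ together with the sign conventions, so that $[\check{C}^{\bullet}]_{\mathbf{a}}$ matches the cochain complex of the link exactly, and carrying out the acyclicity argument in the presence of a positive support, which is what forces the local cohomology to be concentrated in the multidegrees recorded above.
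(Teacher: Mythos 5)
The paper does not prove this statement: it is quoted verbatim as Hochster's classical formula, with a citation to \cite{h} and no argument supplied. Your proposal is the standard proof of that formula --- computing $H^{j}_{\mathfrak{m}}(\Bbbk[\Delta])$ strand by strand from the $\mathbb{Z}^{n}$-graded \v{C}ech complex, identifying the strand in a multidegree $\mathbf{a}\leq\mathbf{0}$ with support $F\in\Delta$ with the shifted augmented cochain complex of ${\rm link}_{\Delta}F$, killing the strands with nonempty positive support by a contracting homotopy, and summing the resulting geometric series to get $\bigl(\tfrac{t^{-1}}{1-t^{-1}}\bigr)^{|F|}$ --- and it is correct as outlined, matching the argument in the cited literature (e.g.\ Bruns--Herzog or Stanley) that the authors are implicitly relying on.
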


These two results provide a relationship between the local cohomology modules and the graded Betti numbers of a Stanley--Reisner ring via the Alexander dual complex.

\begin{cor}\label{hilb}
For a simplicial complex $\Delta$ on the vertex set $[n]$, we have 
\begin{align*}
F(H^j_{\mathfrak{m}}(\Bbbk[\Delta]),t)
& =\displaystyle\sum_\ell\dim_\Bbbk[H^i_{\mathfrak{m}}(\Bbbk[\Delta] )]_\ell \ t^\ell \\[0.15cm]
& =\displaystyle\sum_{i}\beta_{i, n-j+i-1}(\Bbbk[\Delta^{\vee}])\left(\frac{t^{-1}}{1-t^{-1}}\right)^{j-i+1} \\[0.15cm]
& =\displaystyle\sum_{i}\beta_{i, n-j+i}(I_{\Delta^{\vee}})\left(\frac{t^{-1}}{1-t^{-1}}\right)^{j-i}.
\end{align*}
\end{cor}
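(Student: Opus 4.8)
The plan is to derive the two nontrivial identities by combining the two theorems of Hochster recalled above, with the local Alexander duality formula serving as the bridge between reduced simplicial homology and graded Betti numbers. The first displayed equality is nothing but the definition of the Hilbert series of the graded module $H^j_{\mathfrak{m}}(\Bbbk[\Delta])$, so all of the content lies in the second and third equalities.

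First I would start from Hochster's formula for the Hilbert series of local cohomology,
$$F(H^j_{\mathfrak{m}}(\Bbbk[\Delta]),t)=\sum_{F\in\Delta}\dim_\Bbbk\widetilde{H}_{j-|F|-1}(\mathrm{link}_\Delta F;\Bbbk)\left(\frac{t^{-1}}{1-t^{-1}}\right)^{|F|},$$
and group the sum according to the cardinality $|F|=s$, so that each group contributes the weight $\left(\tfrac{t^{-1}}{1-t^{-1}}\right)^{s}$ multiplied by the inner sum $\sum_{F\in\Delta,\,|F|=s}\dim_\Bbbk\widetilde{H}_{j-s-1}(\mathrm{link}_\Delta F;\Bbbk)$. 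Next I would apply the local Alexander duality formula of the preceding theorem to the complex $\Delta^\vee$, using that $(\Delta^\vee)^\vee=\Delta$, to obtain $\beta_{i,n-s}(\Bbbk[\Delta^\vee])=\sum_{F\in\Delta,\,|F|=s}\dim_\Bbbk\widetilde{H}_{i-2}(\mathrm{link}_\Delta F;\Bbbk)$. Matching the homological degree forces $i-2=j-s-1$, that is $i=j-s+1$ or equivalently $s=j-i+1$; this identifies the inner sum with $\beta_{i,n-j+i-1}(\Bbbk[\Delta^\vee])$ and rewrites the weight as $\left(\tfrac{t^{-1}}{1-t^{-1}}\right)^{j-i+1}$. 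Re-indexing the outer sum over $i$ then produces the second displayed equality.

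Finally, to pass from the quotient $\Bbbk[\Delta^\vee]$ to the ideal $I_{\Delta^\vee}$, I would invoke the elementary shift $\beta_{i,d}(\Bbbk[\Delta^\vee])=\beta_{i-1,d}(I_{\Delta^\vee})$ valid for $i\geq 1$, which comes from the short exact sequence $0\to I_{\Delta^\vee}\to S\to \Bbbk[\Delta^\vee]\to 0$, and then reindex the summation by $i\mapsto i+1$; this sends the exponent $j-i+1$ to $j-i$ and the degree $n-j+i-1$ to $n-j+i$, giving the third equality. The only real obstacle is bookkeeping: one must keep three index shifts mutually consistent — the homological degree of the reduced homology ($i-2$ against $j-|F|-1$), the internal degree governed by the cardinality constraint $|F|=n-(n-s)$, and the exponent of the weight factor — and check the boundary case $F=\emptyset$, where $\mathrm{link}_\Delta\emptyset=\Delta$ and $s=0$, against $\beta_{j+1,n}(\Bbbk[\Delta^\vee])=\dim_\Bbbk\widetilde{H}_{j-1}(\Delta;\Bbbk)$. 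No homological input beyond the two Hochster formulas and the resolution shift is needed.
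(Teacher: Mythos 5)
Your proposal is correct and is precisely the derivation the paper intends: the corollary is stated without an explicit proof, as an immediate consequence of combining Hochster's local cohomology formula with the Eagon--Reiner form of the Betti number formula applied to $\Delta^{\vee}$, exactly as you do. Your index bookkeeping ($i-2=j-|F|-1$, $|F|=j-i+1$, and the shift $\beta_{i,d}(I_{\Delta^{\vee}})=\beta_{i+1,d}(\Bbbk[\Delta^{\vee}])$) checks out.
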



\section{minimal free resolutions of the cover ideals \\ of clique-whiskered graphs}\label{resol of clique}
In this section, we give the Cohen--Macaulay type of the edge ideals of clique-whiskered graphs and construct an explicit minimal free resolution of the cover ideals of clique-whiskered graphs. As a corollary, we give a formula for the Hilbert series of the local cohomology modules of the residue rings modulo the its edge ideals. First, let us recall the definition of clique-whiskered graphs, which were introduced by Cook and Nagel in the context of combinatorial commutative algebra. 

\begin{defi}\cite[Section 3]{cn}\label{clique-whisker}
Let $G$ be a graph and let $\pi=\{W_{1},\ldots, W_{r}\}$ be a clique partition, namely, $W_{i}$ is a clique of $G$ for all $i$ and  their disjoint union forms $V(G)$. We write $W_{i}=\{x_{i,1},\ldots, 
x_{i,{|W_{i}|}}\}$. Then the graph $G^{\pi}$ on the vertex set $V(G)\cup\{w_{1},\ldots, w_{r}\}$ with the edge set$$E(G)\cup\left(\bigcup_{1\leq i\leq r}\{\{x_{i,j}, w_{i}\}\,\,: x_{i,j}\in W_{i}\}\right)$$
is called the {\it clique-whiskered graph} of $G$ with respect to $\pi$. 
\end{defi}

To determine the Cohen--Macaulay type of $I(G^{\pi})$ and to construct an explicit minimal free resolution of $J(G^{\pi})$, we first introduce some notation. 

\begin{setup}\label{split}
Let $G$ be a graph and let $\pi=\{W_{1},\ldots, W_{r}\}$ be a clique partition. We write $W_{i}=\{x_{i,1},\ldots, x_{i,|W_{i}|}\}$ for all $1\leq i \leq t$ and $W(G)$ for the whisker graph associated with $G$. Here let $y_{i,j}$ be the new vertex attached to $x_{i,j}$ in $W(G)$. For a minimal vertex cover $C$ of $G^{\pi}$, we set $\mathcal{D}(G^{\pi};C)=V(G)\setminus C$. Also, let $$S=\Bbbk[\{x_{i,j}\,\,: 1\leq i\leq n\mbox{ and }1\leq j\leq |W_{i}|\}\cup\{w_{i}\,\,: 1\leq i\leq r\}]$$ be the polynomial ring and let $$S^{\prime}=\Bbbk[\{x_{i,j}, y_{i,j}\,\,: 1\leq i\leq n\mbox{ and }1\leq j\leq |W_{i}|\}$$ be the polynomial ring. Notice that $I(G^{\pi})$, $J(G^{\pi})$ are ideals of $S$ and $I(W(G))$, $J(W(G))$ are ideals of $S^{\prime}$. 
\end{setup}

\begin{ex}
Let $G$ be a graph on the vertex set $\{x_{1,1}, x_{1,2}, x_{1,3}, x_{2,1}\}$ with the edge set $E(G)=\{\{x_{1,1}, x_{1,2}\}, \{x_{1,1}, x_{1,3}\}, \{x_{1,1}, x_{2,1}\}, \{x_{1,2}, x_{1,3}\}, \{x_{1,3}, x_{2,1}\}\}$. Let $\pi=\{\{x_{1,1}, x_{1,2}, x_{1,3}\}, \{x_{2,1}\}\}$ be a clique-partition of $G$.
Then $G$, $G^{\pi}$ and $W(G)$ are
\begin{center}
\begin{tikzpicture}[scale=0.8]
\node at (-11,0) {$G=$};
\coordinate (A1) at (-9,1.2);
\coordinate (A2) at (-7,1.2);
\coordinate (A3) at (-9,-1.2);
\coordinate (A4) at (-7,-1.2);
\fill (A1) circle (3pt);
\fill (A2) circle (3pt);
\fill (A3) circle (3pt);
\fill (A4) circle (3pt);
\node[above left] at (A1) {$x_{1,1}$};
\node[above right] at (A2) {$x_{2,1}$};
\node[below left] at (A3) {$x_{1,2}$};
\node[below right] at (A4) {$x_{1,3}$};
\draw [semithick] (A1)--(A2);
\draw [semithick] (A1)--(A3);
\draw [semithick] (A3)--(A4);
\draw [semithick] (A2)--(A4);
\draw [semithick] (A1)--(A4);
\node at (-5,0) {$G^{\pi}=$};
\coordinate (B1) at (-3,1.2);
\coordinate (B2) at (-1,1.2);
\coordinate (B3) at (-1,-1.2);
\coordinate (B4) at (-3,-1.2);
\coordinate (B5) at (-4,-2.2);
\coordinate (B6) at (0,2.2);
\foreach \pt in {B1,B2,B3,B4,B5,B6}
\fill (\pt) circle (3pt);
\node[above left] at (B1) {$x_{1,1}$};
\node[above left] at (B2) {$x_{2,1}$};
\node[above right] at (B4) {$x_{1,2}$};
\node[below right] at (B3) {$x_{1,3}$};
\node[below] at (B5) {$w_{1}$};
\node[above] at (B6) {$w_{2}$};
\draw [semithick] (B1)--(B2);
\draw [semithick] (B1)--(B4);
\draw [semithick] (B4)--(B3);
\draw [semithick] (B2)--(B3);
\draw [semithick] (B1)--(B3);
\draw [semithick] (B1)--(B5);
\draw [semithick] (B4)--(B5);
\draw [semithick] (B3)--(B5);
\draw [semithick] (B2)--(B6);
\node at (1,0) {$W(G)=$};
\coordinate (C1) at (3,1.2);
\coordinate (C2) at (5,1.2);
\coordinate (C3) at (3,-1.2);
\coordinate (C4) at (5,-1.2);
\coordinate (C5) at (2,-2.2);
\coordinate (C6) at (6,2.2);
\coordinate (C7) at (2,2.2);
\coordinate (C8) at (6,-2.2);
\foreach \pt in {C1,C2,C3,C4,C5,C6,C7,C8}
\fill (\pt) circle (3pt);
\node[below left] at (C1) {$x_{1,1}$};
\node[below right] at (C2) {$x_{2,1}$};
\node[above left] at (C3) {$x_{1,2}$};
\node[above right] at (C4) {$x_{1,3}$};
\node[below] at (C5) {$y_{1,2}$};
\node[above] at (C6) {$y_{2,1}$};
\node[above] at (C7) {$y_{1,1}$};
\node[below] at (C8) {$y_{1,3}$};
\draw [semithick] (C1)--(C2);
\draw [semithick] (C1)--(C3);
\draw [semithick] (C1)--(C4);
\draw [semithick] (C3)--(C4);
\draw [semithick] (C2)--(C4);
\draw [semithick] (C2)--(C6);
\draw [semithick] (C1)--(C7);
\draw [semithick] (C3)--(C5);
\draw [semithick] (C4)--(C8);
\end{tikzpicture}
\end{center}
\end{ex}

In order to give the Cohen--Macaulay type of $I(G^{\pi})$, we present the following relationship between the edge ideals of clique-whiskered graphs and those of whisker graphs. 

\begin{thm}\label{regular}
Assume Setup \ref{split}. We consider the following sequence $${\bf y}=y_{1,2}-y_{1,1}, y_{1,3}-y_{1,1},\ldots, y_{1,|W_{1}|}-y_{1,1},\ldots, y_{r,2}-y_{r,1},\ldots, y_{r,|W_{r}|}-y_{r,1},$$where if $|W_{i}|=1$, then we remove $y_{i, 2}-y_{i,1},\ldots, y_{i,|W_{i}|}-y_{i,1}$ from ${\bf y}$. Then ${\bf y}$ is a regular sequence of $S^{\prime}/I(W(G))$.
\end{thm}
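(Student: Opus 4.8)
The plan is to reduce the regularity of $\mathbf{y}$ to a dimension count, using the Cohen--Macaulayness of whisker graphs together with the standard criterion that, in a Cohen--Macaulay graded ring, a sequence of homogeneous elements lying in the maximal homogeneous ideal is a regular sequence precisely when the Krull dimension drops by the length of the sequence (and in that case the order of the elements is irrelevant). By the classical theorem of Villarreal, $W(G)$ is Cohen--Macaulay for every graph $G$, so $S^{\prime}/I(W(G))$ is a Cohen--Macaulay ring; moreover $W(G)$ is well-covered, the $n$ whisker edges $\{x_{i,j}, y_{i,j}\}$ forming a perfect matching on $V(W(G))$, so every independent set has at most $n := |V(G)| = \sum_{i}|W_{i}|$ elements while the set of leaves $\{y_{i,j}\}$ attains this bound. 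Hence $\dim S^{\prime}/I(W(G)) = n$. Since the entries of $\mathbf{y}$ are linear forms in the homogeneous maximal ideal and their number is $\ell = \sum_{i}(|W_{i}|-1) = n-r$, it suffices to prove that $\dim S^{\prime}/(I(W(G)) + (\mathbf{y})) = n - \ell = r$.

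The heart of the argument is identifying the quotient by $\mathbf{y}$. Passing to $S^{\prime}/(\mathbf{y})$ simply collapses, within each clique $W_{i}$, all the whisker variables $y_{i,1},\ldots, y_{i,|W_{i}|}$ to a single one. Concretely, I would introduce the surjective $\Bbbk$-algebra homomorphism $\phi\colon S^{\prime}\to S$ defined by $\phi(x_{i,j}) = x_{i,j}$ and $\phi(y_{i,j}) = w_{i}$ for all $i,j$. Its kernel is exactly $(\mathbf{y})$, since both $S^{\prime}/(\mathbf{y})$ and $S$ are polynomial rings in $n+r$ variables and $\phi$ factors through this collapse. Under $\phi$, each generator $x_{i,j}x_{k,l}$ of $I(W(G))$ arising from an edge of $G$ is preserved, while each whisker generator $x_{i,j}y_{i,j}$ is sent to the clique-whisker edge $x_{i,j}w_{i}$. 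Therefore $\phi$ carries $I(W(G))$ onto the ideal generated by $E(G)$ together with all the $x_{i,j}w_{i}$, which is precisely $I(G^{\pi})$, and so $S^{\prime}/(I(W(G)) + (\mathbf{y})) \cong S/I(G^{\pi})$.

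It then remains to compute $\dim S/I(G^{\pi})$. Each set $W_{i}\cup\{w_{i}\}$ is a clique of $G^{\pi}$, and these $r$ cliques partition $V(G^{\pi})$, so every independent set of $G^{\pi}$ meets each of them at most once and hence has at most $r$ elements; as $\{w_{1},\ldots, w_{r}\}$ is independent, the independence number equals $r$ and $\dim S/I(G^{\pi}) = r$. Combining this with the isomorphism above gives $\dim S^{\prime}/(I(W(G)) + (\mathbf{y})) = r = n - \ell = \dim S^{\prime}/I(W(G)) - \ell$, and the Cohen--Macaulay criterion recalled in the first paragraph finishes the proof. The only genuinely delicate point is the clean identification $S^{\prime}/(I(W(G)) + (\mathbf{y}))\cong S/I(G^{\pi})$; once this is established, the Cohen--Macaulay input and the two dimension computations are routine, and the exceptional handling of cliques with $|W_{i}| = 1$ (where no linear forms are contributed) is consistent, since such cliques contribute $0 = |W_{i}|-1$ to $\ell$.
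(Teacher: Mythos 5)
Your proof is correct and follows essentially the same route as the paper's: identify $S^{\prime}/(I(W(G))+({\bf y}))$ with $S/I(G^{\pi})$, count Krull dimensions, and invoke the fact that in a Cohen--Macaulay graded ring a homogeneous sequence in the maximal ideal dropping the dimension by its length is a regular sequence. The only (harmless) difference is that you take the Cohen--Macaulay input from Villarreal's theorem on the whisker graph $W(G)$ and apply the criterion directly to $S^{\prime}/I(W(G))$, whereas the paper cites Cook--Nagel's Cohen--Macaulayness of $G^{\pi}$ and extends ${\bf y}$ to a full regular sequence; your version is, if anything, the more direct application of the criterion, and you additionally spell out the isomorphism that the paper merely asserts.
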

\begin{proof}
Notice that ${\bf y}$ consists of $|V(G)|-r$ elements. 
Now, we have an isomorphism $$S/(I(G^{\pi}))\simeq S^{\prime}/(I(W(G))+({\bf y})).$$
Since $\dim S/I(G^{\pi})=|V(G)|+r-|V(G)|=r$, we obtain that $$\dim S^{\prime}/(I(W(G))+({\bf y}))=r=\dim S^{\prime}/I(W(G))-(|V(G)|-r),$$ and hence ${\bf y}$ is a linear system of parameters. Since $S/I(G^{\pi})$ is Cohen--Macaulay by \cite[Corollary 3.5]{cn}, there exists a regular sequence which contains ${\bf y}$ in $S^{\prime}/I(W(G))$. Therefore since ${\bf y}$ is a subsequence of some regular sequence of $S^{\prime}/I(W(G))$, ${\bf y}$ is a regular sequence, as required. 
\end{proof}

\begin{cor}\label{betti}
Assume Setup \ref{split}. Then, we have 
$$\beta_{i,j}(S/I(G^{\pi}))=\beta_{i,j}(S^{\prime}/I(W(G)))\mbox{ for all }i,j.$$
\end{cor}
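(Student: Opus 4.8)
The plan is to deduce the equality of Betti numbers from Theorem~\ref{regular} together with the general principle that reducing a finitely generated graded module modulo a regular sequence of homogeneous linear forms leaves its graded Betti numbers unchanged.

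First I would record the isomorphism already observed in the proof of Theorem~\ref{regular}, namely $S/I(G^{\pi})\simeq S^{\prime}/(I(W(G))+({\bf y}))$, and note that it is an isomorphism of \emph{graded} $\Bbbk$-algebras. Indeed, quotienting $S^{\prime}$ by $({\bf y})$ identifies $y_{i,1}=\cdots=y_{i,|W_{i}|}$ within each clique, and renaming this common variable $w_{i}$ gives $S^{\prime}/({\bf y})\cong S$; under this identification each whisker generator $x_{i,j}y_{i,j}$ of $I(W(G))$ becomes the clique-whisker generator $x_{i,j}w_{i}$, while the generators coming from $E(G)$ are preserved, so $I(W(G))+({\bf y})$ is carried onto $I(G^{\pi})$. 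Since the forms in ${\bf y}$ are homogeneous of degree $1$, the whole identification respects the grading.

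Next I would invoke the single-variable reduction lemma. Let $R$ be a polynomial ring, $M$ a finitely generated graded $R$-module, and $z$ a homogeneous linear form that is a nonzerodivisor on $M$; set $\bar{R}=R/(z)$ and $\bar{M}=M/zM$. From the graded short exact sequence $0\to R(-1)\xrightarrow{z}R\to\bar{R}\to 0$ (a free resolution of $\bar R$ since $R$ is a domain and $z\neq 0$) one computes $\operatorname{Tor}^{R}_{i}(M,\bar{R})=0$ for $i>0$ and $\operatorname{Tor}^{R}_{0}(M,\bar{R})=\bar{M}$, using that $z$ is a nonzerodivisor on $M$; hence $M\otimes^{L}_{R}\bar{R}\simeq\bar{M}$ in the derived category. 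Associativity of the derived tensor product together with $\bar{R}\otimes^{L}_{\bar{R}}\Bbbk\simeq\Bbbk$ then gives graded isomorphisms $\operatorname{Tor}^{\bar{R}}_{i}(\bar{M},\Bbbk)\cong\operatorname{Tor}^{\bar{R}}_{i}(M\otimes^{L}_{R}\bar{R},\Bbbk)\cong\operatorname{Tor}^{R}_{i}(M,\Bbbk)$ for all $i$, whence $\beta_{i,j}(M)=\beta_{i,j}(\bar{M})$ for all $i,j$. (Alternatively one may simply cite the standard fact, e.g.\ \cite{bh}, that graded Betti numbers are stable under reduction by a regular sequence of linear forms.)

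Finally I would apply this lemma iteratively to $R=S^{\prime}$, $M=S^{\prime}/I(W(G))$, and the successive linear forms of ${\bf y}$, which form a regular sequence on $M$ by Theorem~\ref{regular}. After reducing by all $|V(G)|-r$ forms one reaches $\bar{R}=S^{\prime}/({\bf y})\cong S$ and $\bar{M}=S^{\prime}/(I(W(G))+({\bf y}))\cong S/I(G^{\pi})$, yielding $\beta_{i,j}(S^{\prime}/I(W(G)))=\beta_{i,j}(S/I(G^{\pi}))$ for all $i,j$, as desired. The only point requiring genuine care is the bookkeeping in the first step, that is, checking that the degree-one forms ${\bf y}$ are honestly homogeneous so that the reduction is graded and that the graded isomorphism $S^{\prime}/({\bf y})\cong S$ carries $I(W(G))$ onto $I(G^{\pi})$; once this is in place the homological input is entirely standard.
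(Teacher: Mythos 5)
Your proposal is correct and follows exactly the route the paper intends: the corollary is stated as an immediate consequence of Theorem~\ref{regular}, via the graded isomorphism $S/I(G^{\pi})\simeq S^{\prime}/(I(W(G))+({\bf y}))$ and the standard fact that graded Betti numbers are preserved when one reduces modulo a regular sequence of linear forms. Your explicit derived-tensor-product justification of that standard fact is sound but not something the paper spells out.
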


\begin{cor}\label{CM type}
Assume Setup \ref{split}. Then the Cohen--Macaulay type of $I(G^{\pi})$ is equal to 
$${\rm type}(S/I(G^{\pi}))=|\{C\,\,: C\mbox{ is a minimal vertex cover of }G\}|.$$
\end{cor}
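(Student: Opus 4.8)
The plan is to compute the Cohen--Macaulay type as the $\Bbbk$-dimension of the socle of an artinian reduction, and then to identify that socle with the set of maximal independent sets of $G$. Since $S/I(G^{\pi})$ is Cohen--Macaulay of Krull dimension $r$ (as established in the proof of Theorem \ref{regular}), its type is unchanged after modding out a linear system of parameters, and for the resulting artinian ring the type is exactly the dimension of its socle. I would use the $r$ linear forms $\theta_{i} = w_{i} + \sum_{j=1}^{|W_{i}|} x_{i,j}$, one for each block $W_i$ of the clique partition.

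First I would verify that $\theta_{1},\dots,\theta_{r}$ is a linear system of parameters and identify the quotient explicitly. Substituting $w_{i} = -\sum_{j} x_{i,j}$, each whisker relation $x_{i,j}w_{i}$ becomes $-x_{i,j}^{2}-\sum_{k\neq j}x_{i,j}x_{i,k}$; because $W_{i}$ is a clique, every $x_{i,j}x_{i,k}$ with $k\neq j$ already lies in $I(G)$, so modulo $I(G)$ this relation contributes precisely $x_{i,j}^{2}$. Hence
$$A := S/\bigl(I(G^{\pi}) + (\theta_{1},\dots,\theta_{r})\bigr)\;\cong\;\Bbbk[x_{i,j}]\big/\bigl(I(G) + (x_{i,j}^{2})\bigr),$$
which is artinian; as there are $r$ forms and $\dim S/I(G^{\pi}) = r$, the $\theta_{i}$ form a linear system of parameters, and by Cohen--Macaulayness $\mathrm{type}(S/I(G^{\pi})) = \dim_{\Bbbk}\operatorname{socle}(A)$. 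Alternatively, one may first invoke Corollary \ref{betti} to replace $S/I(G^{\pi})$ by $S'/I(W(G))$ and run the same reduction with the forms $x_{i,j}+y_{i,j}$, which produces the same artinian ring $A$.

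It then remains to compute $\operatorname{socle}(A)$ combinatorially. A $\Bbbk$-basis of $A$ consists of the squarefree monomials $x_{F}=\prod_{v\in F}v$ indexed by the independent sets $F$ of $G$, since $x_{i,j}^{2}=0$ discards non-squarefree monomials and $I(G)$ discards non-independent supports. For a variable $x_{i,j}$, the product $x_{i,j}\cdot x_{F}$ is either $0$ or the basis monomial $x_{F\cup\{x_{i,j}\}}$, the latter occurring exactly when $F\cup\{x_{i,j}\}$ is still independent; as distinct basis monomials are sent to distinct nonzero monomials, the socle is spanned by monomials, and $x_{F}$ lies in the socle precisely when no vertex can be added to $F$ without destroying independence, i.e. when $F$ is a maximal independent set of $G$. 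Thus $\dim_{\Bbbk}\operatorname{socle}(A)$ equals the number of maximal independent sets of $G$, which under complementation is the number of minimal vertex covers of $G$, as claimed.

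The step I expect to require the most care is the reduction itself: checking that $\theta_{1},\dots,\theta_{r}$ is genuinely a system of parameters (equivalently that $A$ is artinian) and that passing to $A$ preserves the type, both of which rest on the Cohen--Macaulayness of $S/I(G^{\pi})$ from \cite[Corollary 3.5]{cn}. The clique hypothesis enters crucially here, since it is exactly what collapses the mixed terms $x_{i,j}x_{i,k}$ into $I(G)$ and yields the clean squarefree artinian quotient $\Bbbk[x_{i,j}]/(I(G)+(x_{i,j}^{2}))$. The socle computation afterwards is essentially bookkeeping, once one notes that each variable acts on the monomial basis either by annihilation or by an injective shift, so that no cancellation can produce socle elements outside the monomial span.
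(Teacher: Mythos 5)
Your proof is correct, but it takes a genuinely different route from the paper. The paper's own argument is a two-line reduction: it invokes Corollary \ref{betti} (itself a consequence of the regular sequence ${\bf y}$ from Theorem \ref{regular}) to get ${\rm type}(S/I(G^{\pi}))={\rm type}(S^{\prime}/I(W(G)))$, and then cites \cite[Corollary 4.4]{crt} for the fact that the type of the whisker graph equals the number of minimal vertex covers of $G$. You instead perform the artinian reduction directly on $S/I(G^{\pi})$ with the forms $\theta_{i}=w_{i}+\sum_{j}x_{i,j}$ and compute the socle of the resulting ring $\Bbbk[x_{i,j}]/(I(G)+(x_{i,j}^{2}))$ by hand; your observation that the clique hypothesis on $W_{i}$ absorbs the cross terms $x_{i,j}x_{i,k}$ into $I(G)$ is exactly the right mechanism, and your socle argument (each variable acts on the monomial basis by annihilation or an injective shift, so the socle is spanned by the $x_{F}$ with $F$ a maximal independent set of $G$) is sound. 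What your approach buys is self-containedness: you need only the Cohen--Macaulayness of $S/I(G^{\pi})$ from \cite[Corollary 3.5]{cn}, and in effect you reprove the cited result of \cite{crt} as the special case where all cliques are singletons. What the paper's approach buys is brevity, since Theorem \ref{regular} and Corollary \ref{betti} are already in place for other purposes. One small remark: your parenthetical that one could alternatively pass to $S^{\prime}/I(W(G))$ via Corollary \ref{betti} and reduce by the forms $x_{i,j}+y_{i,j}$ is essentially the paper's first step, so the two arguments diverge only in whether the final count is computed or cited.
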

\begin{proof}
By Corollary \ref{betti}, we have ${\rm type}(S/I(G^{\pi}))={\rm type}(S^{\prime}/I(W(G)))$. From \cite[Corollary 4.4]{crt}, ${\rm type}(S^{\prime}/I(W(G)))=|\{C\,\,: C\mbox{ is a minimal vertex cover of }G\}|$, as required. 
\end{proof}

In the rest of this section, we construct an explicit minimal free resolution of $J(G^{\pi})$. To this end, we present several lemmas. First, we establish the following relationship between the minimal vertex covers of $G^{\pi}$ and those of $W(G)$. 

\begin{lemma}\label{min}
Assume Setup \ref{split}. Then there exists a one-to-one correspondence between the set of minimal vertex covers of $G^{\pi}$ and that of $W(G)$. 
\end{lemma}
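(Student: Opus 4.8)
The plan is to show that both families of minimal vertex covers are parametrized by the independent sets of $G$, and then to read off the desired correspondence by composing the two parametrizations. Throughout I would write $Q_{i}=W_{i}\cup\{w_{i}\}$ for the clique of $G^{\pi}$ obtained from $W_{i}$; this is a complete graph on $|W_{i}|+1$ vertices, so for any independent set $A$ of $G$ we automatically have $|A\cap W_{i}|\le 1$ for every $i$.

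First I would analyze ${\rm Min}(W(G))$. Given a minimal vertex cover $C$ of $W(G)$, set $A=\{x_{i,j}\,:\,x_{i,j}\notin C\}=V(G)\setminus C$. Since $C$ covers every edge of $G$, the set $A$ is independent; and since each whisker edge $\{x_{i,j},y_{i,j}\}$ must be covered, $x_{i,j}\in A$ forces $y_{i,j}\in C$, while minimality forces $y_{i,j}\notin C$ whenever $x_{i,j}\in C$, because $y_{i,j}$ has degree one and its only edge is already covered by $x_{i,j}$. Hence $C=(V(G)\setminus A)\cup\{y_{i,j}\,:\,x_{i,j}\in A\}$, and conversely a direct check shows that this formula produces a minimal vertex cover for every independent set $A$ of $G$ (removing any vertex uncovers the corresponding whisker edge). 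This gives a bijection between ${\rm Min}(W(G))$ and the independent sets of $G$.

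Next I would analyze ${\rm Min}(G^{\pi})$. For a minimal vertex cover $C$, the central claim is that $C$ omits exactly one vertex of each clique $Q_{i}$: at most one vertex is omitted because $Q_{i}$ is complete, and at least one is omitted because if $Q_{i}\subseteq C$ then $w_{i}$ would be redundant, its only neighbors being the vertices of $W_{i}$, all of which lie in $C$, contradicting minimality. Writing $A=\mathcal{D}(G^{\pi};C)=V(G)\setminus C$ for the omitted vertices of $G$, the cover condition on the edges of $G$ shows $A$ is independent, and the omitted vertex of $Q_{i}$ is $w_{i}$ when $A\cap W_{i}=\emptyset$ and the single vertex of $A\cap W_{i}$ otherwise. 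Thus $C$ is determined by $A$, namely $C=(V(G)\setminus A)\cup\{w_{i}\,:\,A\cap W_{i}\neq\emptyset\}$. Conversely, for any independent set $A$ I would define this same set and verify it is a minimal vertex cover: no $w_{i}$ is removable because the edge joining $w_{i}$ to the omitted vertex of $W_{i}$ is covered only by $w_{i}$, and no retained $x_{i,j}$ is removable because either the edge $\{x_{i,j},w_{i}\}$ (when $w_{i}$ is omitted) or a clique edge of $W_{i}$ to its omitted vertex is covered only by $x_{i,j}$. This yields a bijection between ${\rm Min}(G^{\pi})$ and the independent sets of $G$.

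Composing the two bijections gives the desired one-to-one correspondence, explicitly $C\mapsto(V(G)\setminus A)\cup\{w_{i}\,:\,A\cap W_{i}\neq\emptyset\}$ where $A=V(G)\setminus C$. The main obstacle is the characterization of ${\rm Min}(G^{\pi})$, and in particular the ``exactly one omitted vertex per clique'' claim: the redundancy argument for $w_{i}$ is precisely what forces minimality to interact correctly with the clique structure, and the subsequent minimality verification requires a small case split according to whether the omitted vertex of $Q_{i}$ is $w_{i}$ or a vertex of $W_{i}$, with the degenerate case $|W_{i}|=1$ handled separately. Once this characterization is in place, the matching with independent sets of $G$, and hence the correspondence between the two families, follows cleanly.
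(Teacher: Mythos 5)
Your proof is correct, and it establishes the same underlying correspondence as the paper: your composite map $C\mapsto(V(G)\setminus A)\cup\{w_i : A\cap W_i\neq\emptyset\}$ agrees with the paper's map $\psi$ once one notes that $y_{i,j}\in C$ for some $j$ is equivalent to $A\cap W_i\neq\emptyset$. The difference is organizational: the paper writes down the two maps $\varphi$ and $\psi$ directly and verifies $\psi\circ\varphi=\mathrm{id}$ and $\varphi\circ\psi=\mathrm{id}$, simply asserting (``Notice that\dots'') that the images are minimal vertex covers, whereas you factor both sets through the independent sets of $G$. Your route buys a cleaner account of exactly the points the paper glosses over --- that a minimal vertex cover of $G^{\pi}$ omits precisely one vertex from each clique $W_i\cup\{w_i\}$ (the non-redundancy of $w_i$ being the key observation), and the symmetric degree-one argument for the whiskers of $W(G)$ --- at the cost of an extra intermediate parametrization. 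Either way the correspondence is the same, and your version supplies the minimality verifications in full.
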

\begin{proof}
For a minimal vertex cover $C$ of $G^{\pi}$, we set$$C^{\prime}=(C\setminus\{w_{1},\ldots, w_{r}\})\cup\left(\bigcup_{1\leq i\leq r}\{y_{i,j}\,\,: 1\leq j\leq |W_{i}|\mbox{ with }x_{i,j}\notin C\}\right).$$Notice that $C^{\prime}$ is a minimal vertex cover of $W(G)$ and for any $w_{i}\in C$,  $y_{i,j}$ is uniquely determined since $W_{i}$ is a complete graph for all $i$. Also, for a minimal vertex cover of $D^{\prime}$ of $W(G)$, we set$$D=\left(D^{\prime}\setminus\bigcup_{1\leq i\leq r}\{y_{i,j}\,\,: 1\leq j\leq |W_{i}|\}\right)\cup\left(\bigcup_{1\leq i \leq r}\{w_{i}\,\,: y_{i,j}\in D^{\prime}\mbox{ for some }j\}\right).$$
Notice that $D$ is a minimal vertex cover of $G^{\pi}$. With this notation, we define the map $$\varphi:{\rm Min}(G^{\pi})\rightarrow{\rm Min}(W(G))\mbox{ and }\psi: {\rm Min}(W(G))\rightarrow{\rm Min}(G^{\pi})$$by $\varphi(C)=C^{\prime}$ and $\psi(D^{\prime})=D$. Now, we have 
\begin{align*}
\psi\circ\varphi(C)&=\psi\left((C\setminus\{w_{1},\ldots, w_{r}\})\cup(\bigcup_{1\leq i\leq r}\{y_{i,j}\,\,: 1\leq j\leq |W_{i}|\mbox{ with }x_{i,j}\notin C\})\right) \\
&= (C\setminus\{w_{1},\ldots, w_{r}\})\cup\{w_{i}\,\,: y_{i,j}\in \varphi(C)\mbox{ for some } j\} \\
& = C
\end{align*}
Conversely, we have 
\begin{align*}
\varphi\circ\psi(D^{\prime})&=\varphi\left((D^{\prime}\setminus\bigcup_{1\leq i\leq r}\{y_{i,j}\,\,: 1\leq j\leq |W_{i}|\})\cup(\bigcup_{1\leq i \leq r}\{w_{i}\,\,: y_{i,j}\in D^{\prime}\mbox{ for some }j\})\right) \\
&= \left(D^{\prime}\setminus\bigcup_{1\leq i\leq r}\{y_{i,j}\,\,: 1\leq j\leq |W_{i}|\}\right)\cup\left(\bigcup_{1\leq i\leq r}\{y_{i,j}\,\,: x_{i,j}\in\psi(D^{\prime})\}\right) \\
& = D^{\prime}, 
\end{align*}
which completes the proof. 
\end{proof}

In \cite{cf}, Crupi and Ficarra defined the set $\mathcal{C}(W(G);C^{\prime})=\{x_{i,j}\,\,:y_{i,j}\in C^{\prime}\}$ for a minimal vertex cover $C^{\prime}$ of $W(G)$ to study minimal free resolutions of cover ideals of Cohen--Macaulay very well-covered graphs. Also, they denoted by $\left(\begin{smallmatrix}
\mathcal{C}(W(G) ; C^{\prime}) \\ i \end{smallmatrix}\right)$ the set of all subsets of size $i$ of $\mathcal{C}(W(G) ; C^{\prime})$ for all $0\leq i\leq |\mathcal{C}(W(G) ; C^{\prime})|$. With this notation, we have the following lemma. 

\begin{lemma}\label{compo}
Assume Setup \ref{split}. With the notation above, we have $\mathcal{D}(G^{\pi};C)=\mathcal{C}(W(G);C^{\prime}).$
\end{lemma}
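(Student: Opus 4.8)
The plan is to prove the set equality by directly unfolding both definitions and reading off the membership condition for the $y$-vertices of $C'$. First I would recall that, since the vertex set of $G$ is exactly $V(G)=\{x_{i,j}\,:\,1\le i\le r,\ 1\le j\le |W_i|\}$, the set $\mathcal{D}(G^{\pi};C)=V(G)\setminus C$ consists precisely of those $x_{i,j}$ with $x_{i,j}\notin C$.

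Next I would analyze the structure of $C'=\varphi(C)$ produced in Lemma \ref{min}, which is the union of $C\setminus\{w_{1},\ldots,w_{r}\}$ and $\bigcup_{1\le i\le r}\{y_{i,j}\,:\,x_{i,j}\notin C\}$. The key observation is that the first set contributes only $x$-type vertices: the $w_{i}$ have been deleted, and a minimal vertex cover $C$ of $G^{\pi}$ contains no $y$-vertices at all. Consequently every $y$-vertex appearing in $C'$ must come from the second set, so $y_{i,j}\in C'$ if and only if $x_{i,j}\notin C$.

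Combining these two points gives, straight from the definition $\mathcal{C}(W(G);C')=\{x_{i,j}\,:\,y_{i,j}\in C'\}$, the chain of equalities $\mathcal{C}(W(G);C')=\{x_{i,j}\,:\,x_{i,j}\notin C\}=V(G)\setminus C=\mathcal{D}(G^{\pi};C)$, which is the asserted identity.

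The statement is essentially a bookkeeping identity, so I do not expect a serious obstacle; the only point requiring care is to confirm that a $y$-vertex lands in $C'$ for no reason other than the corresponding $x$-vertex lying outside $C$ — that is, that the two parts of the union defining $C'$ do not interfere — and this follows immediately from the disjointness of the three vertex types $x_{i,j}$, $y_{i,j}$, $w_{i}$ together with the fact that $C'$ (respectively $C$) contains no $w$-vertices (respectively $y$-vertices).
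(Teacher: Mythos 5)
Your proof is correct, and it is exactly the verification the paper intends: the paper states Lemma \ref{compo} without proof, treating it as immediate from the definition of $\varphi(C)=C'$ in Lemma \ref{min}, and your unwinding (that $y_{i,j}\in C'$ precisely when $x_{i,j}\notin C$, since $C\subseteq V(G)\cup\{w_1,\ldots,w_r\}$ contributes no $y$-vertices) is the intended bookkeeping.
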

For a  subset $C$ of $V(G^{\pi})$, we set $C_{x}=\{(i,j)\,\,: x_{i,j}\in C\}$ and $C_{w}=\{k\,\,: w_{k}\in C\}$. Then we set 
$${\bf z}_{C}={\bf x}_{C_{x}}{\bf w}_{C_{w}},$$
where ${\bf x}_{C_{x}}=\prod_{(i,j)\in C_{x}} x_{i,j}$ and ${\bf w}_{C_{w}}=\prod_{k\in C_{w}}w_{k}$. 

We construct a minimal free resolution of the cover ideal $J(G^{\pi})$. 

\begin{const}\label{const1}
Assume Setup \ref{split}. We consider the following ordering of vertices $G$: 
$$x_{1,1}<x_{1,2}<\cdots<x_{1,|W_{1}|}<x_{2,1}<\cdots<x_{2,|W_{2}|}<\cdots<x_{r,1}<\cdots<x_{r,|W_{r}|}.$$
Under this ordering of vertices of $V(G)$, let $$\mathbb{F}:\cdots\longrightarrow F_{i}\overset{d_{i}}{\longrightarrow} F_{i-1}\overset{d_{i-1}}{\longrightarrow}\cdots \overset{d_{2}}{\longrightarrow}F_{1}\overset{d_{1}}{\longrightarrow}F_{0}\overset{d_{0}}{\longrightarrow}J(G^{\pi})\rightarrow0$$
be the complex
\begin{enumerate}
\item[-]whose $i$-th free module $F_{i}$ has a basis the symbols ${\bf f}(C ; \sigma)$ having multidegree ${\bf z}_{C}{\bf x}_{\sigma}$, where $C\in{\rm Min}(G^{\pi})$ and 
$\sigma\in\left(\begin{smallmatrix}\mathcal{D}(G^{\pi} ; C) \\ i\end{smallmatrix}\right)$ \\
\item[-]and whose differential is given by $d_{0}({\bf f}(C ; \sigma))={\bf z}_{C}$ for $i=0$ and for $i>0$ is defined as follows:$$d_{i}({\bf f}(C ; \sigma))=\displaystyle\sum_{x_{i,j}\in\sigma}(-1)^{\alpha(\sigma;x_{i,j})}[w_{i}{\bf f}((C\setminus\{w_{i}\})\cup\{x_{i,j}\};\sigma\setminus\{x_{i,j}\})-x_{i,j}{\bf f}(C;\sigma\setminus\{x_{i,j}\})],$$
\end{enumerate}
where $\alpha(\sigma;x_{i,j})= |\{x_{p,q}\in\sigma\,\,: x_{p,q}>x_{i,j}\}|.$
\end{const}

\begin{thm}\label{free resol}
The complex $(\mathbb{F},d_{\bullet})$ given in Construction \ref{const1} is a minimal free resolution of $J(G^{\pi})$. 
\end{thm}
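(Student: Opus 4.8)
The plan is to establish three properties of $(\mathbb{F}, d_\bullet)$: that it is a complex, that it is minimal, and that it is acyclic. Minimality is immediate from the shape of the differential, since every nonzero entry of the matrix of each $d_i$ is of the form $\pm x_{i,j}$ or $\pm w_i$ and hence lies in the graded maximal ideal of $S$. Homogeneity is equally transparent: for $x_{i,j}\in\sigma\subseteq\mathcal{D}(G^{\pi};C)=V(G)\setminus C$ the edge $\{x_{i,j},w_i\}$ forces $w_i\in C$, so in the first summand $\mathbf{z}_{(C\setminus\{w_i\})\cup\{x_{i,j}\}}=\mathbf{z}_C\, x_{i,j}/w_i$ and the factor $w_i$ restores the degree lost by deleting $x_{i,j}$ from $\sigma$, while in the second summand the explicit factor $x_{i,j}$ does the same; thus each $d_i$ is homogeneous of multidegree preserving $\mathbf{z}_C\mathbf{x}_\sigma$. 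Consequently $\mathbb{F}$ is automatically a minimal complex, and the whole problem reduces to proving that it is a complex and that it is acyclic.

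That $\mathbb{F}$ is a complex amounts to the identity $d_{i-1}\circ d_i=0$, which I would verify by a direct sign computation. Expanding $d_{i-1}d_i(\mathbf{f}(C;\sigma))$ produces terms indexed by ordered pairs of distinct vertices $x_{i,j},x_{k,l}\in\sigma$ removed successively; for each unordered pair there are contributions coming from the two summands of each application of the differential, and the exponents $\alpha(\sigma;x_{i,j})$ supply exactly the Koszul signs needed to make the contribution of removing $x_{i,j}$ before $x_{k,l}$ cancel that of the opposite order. The only point requiring care is the interaction between the ``replace $w_i$ by $x_{i,j}$ in $C$'' operation and the plain multiplication by $x_{i,j}$: one must check that performing the two such moves in either order yields the same cover and matching signs. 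This is bookkeeping rather than genuine difficulty.

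The heart of the proof is acyclicity, and here I would mirror the Betti-splitting approach of Crupi and Ficarra \cite{cf}, inducting on the number $r$ of cliques in the partition $\pi$. Singling out the last clique $W_r$ and its whisker $w_r$, one splits $J(G^{\pi})=J_1+J_2$, where $J_1$ is generated by the covers containing $w_r$ and $J_2$ by those avoiding $w_r$ (the latter necessarily containing all of $W_r$, since every edge $\{x_{r,j},w_r\}$ must be covered). The subcomplex of $\mathbb{F}$ spanned by the basis elements $\mathbf{f}(C;\sigma)$ with $w_r\in C$ and the quotient complex on those with $w_r\notin C$ should be identified with (shifts of) the analogous complexes for $J_1$, $J_2$ and $J_1\cap J_2$, each built from a clique-whiskered graph on fewer cliques and hence acyclic by induction. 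One then checks that the connecting maps are precisely the displayed differential, so that the Betti-splitting identity $\beta_{i,j}(J(G^{\pi}))=\beta_{i,j}(J_1)+\beta_{i,j}(J_2)+\beta_{i-1,j}(J_1\cap J_2)$ holds and the resulting iterated mapping cone is a minimal resolution equal to $\mathbb{F}$. The one-to-one correspondence of Lemma \ref{min} together with $\mathcal{D}(G^{\pi};C)=\mathcal{C}(W(G);C^{\prime})$ from Lemma \ref{compo} guarantees that the ranks $\sum_{C}\binom{|\mathcal{D}(G^{\pi};C)|}{i}$ of the modules $F_i$ match those in the Crupi--Ficarra resolution of $J(W(G))$, giving a consistency check at each inductive step.

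The main obstacle is exactly this acyclicity argument. The tempting shortcut of specializing the Crupi--Ficarra resolution of $J(W(G))$ along the ring map $\phi\colon y_{i,j}\mapsto w_i$, whose kernel is the regular sequence $\mathbf{y}$ of Theorem \ref{regular}, fails verbatim: that substitution sends the squarefree generator $\mathbf{z}_{C^{\prime}}$ of $J(W(G))$ to $\mathbf{x}_{C_x}\prod_i w_i^{k_i}$, which is not squarefree as soon as a clique contributes several whisker vertices to $C^{\prime}$, so $\phi(J(W(G)))\neq J(G^{\pi})$ and base change along the non-flat map $\phi$ does not transport the resolution. Acyclicity must therefore be proved intrinsically for $G^{\pi}$, either through the mapping-cone induction above or, alternatively, by a $\mathbb{Z}^{n}$-graded argument showing that each multidegree strand $\mathbb{F}_{\mathbf{a}}$ is exact in positive homological degree. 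I expect the sign management in the mapping cone, and the verification that every intermediate ideal is again the cover ideal of a clique-whiskered graph on a smaller partition, to be where essentially all of the work lies.
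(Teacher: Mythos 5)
There is a genuine problem, and it lies precisely in the paragraph where you reject the specialization approach: that rejection is based on a false premise, and it is what forces you onto an inductive route whose hardest step you then leave unproved. A \emph{minimal} vertex cover $C^{\prime}$ of $W(G)$ contains at most one whisker vertex $y_{i,j}$ per clique $W_i$: its complement is a maximal independent set $A$ of $W(G)$, $y_{i,j}\in C^{\prime}$ iff $x_{i,j}\in A$, and $A\cap V(G)$ is an independent set of $G$, hence meets the clique $W_i$ in at most one vertex. So every exponent $k_i$ in your $\prod_i w_i^{k_i}$ is at most $1$, the image $\phi(\mathbf{z}_{C^{\prime}})$ is squarefree, and $\phi(J(W(G)))=J(G^{\pi})$ (this is exactly the content of Lemma \ref{min}). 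The non-flatness of $\phi$ is also not an obstruction: what is needed is $\mathrm{Tor}^{S^{\prime}}_{>0}(S^{\prime}/J(W(G)),S^{\prime}/(\mathbf{y}))=0$, i.e.\ that $\mathbf{y}$ is a regular sequence on $S^{\prime}/J(W(G))$, and the paper proves exactly this by an $f$-vector/$h$-vector computation comparing $\Delta(W(G))$ with $\Delta(G^{\pi})*\Gamma$ and invoking Stanley's Hilbert-series criterion. Once that is known, the Crupi--Ficarra resolution of $J(W(G))$ reduces modulo $\mathbf{y}$ to a minimal free resolution of $J(G^{\pi})$, which is identified with Construction \ref{const1} via Lemmas \ref{min} and \ref{compo}. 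In short, the ``tempting shortcut'' you dismiss is the paper's actual (and correct) proof; the only nontrivial input it requires is the regularity of $\mathbf{y}$ on $S^{\prime}/J(W(G))$, which your proposal neither notices nor supplies.

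Because you discard that route, your argument stands or falls with the Betti-splitting induction, and as written that part is a plan rather than a proof. You do not verify that $J_1$, $J_2$ and $J_1\cap J_2$ are (shifts of) cover ideals of clique-whiskered graphs on smaller partitions — note that removing the whisker $w_r$ alone does not leave a clique-whiskered graph, and the covers avoiding $w_r$ must contain all of $W_r$, so $J_2$ is a variable-multiple of a cover ideal of a different graph whose identification needs care; you do not check that the decomposition is in fact a Betti splitting (e.g.\ via linear quotients or the splitting criteria of Francisco--H\`a--Van Tuyl); and you explicitly defer the sign management of the mapping cone. Since acyclicity is, as you say yourself, the heart of the theorem, the proposal is incomplete at exactly the point where completeness is required.
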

\begin{proof}
Notice that $S^{\prime}/J(W(G))+({\bf y})\simeq S/J(G^{\pi})$, where ${\bf y}$ is defined in Lemma \ref{regular}. First, we show that ${\bf y}$ is a regular sequence of $S^{\prime}/J(W(G))$. Let $$\Gamma=\langle u_{1,1},\ldots, u_{1, |W_{1}|-1},\ldots, u_{r,1},\ldots, u_{r,|W_{r}|-1}\rangle$$ be the simplicial complex, where if $|W_{i}|=1$, then we remove $u_{i,1},\ldots, u_{i, |W_{i}-1|}$ from $\Gamma$. We consider the join $\Delta(G^{\pi})*\Gamma$. Fix a non-negative integer $k$. Since $$f_{k-1}(\Delta(G^{\pi}))=\displaystyle\sum_{\ell=0}^{k}f_{\ell-1}(\Delta(G))\begin{pmatrix}r-\ell \\ k-\ell \end{pmatrix}$$and $\Delta(G^{\pi}) * \Gamma$ is the independence complex of the graph obtained from $G^{\pi}$ by adding $(|V(G)| - r)$ isolated vertices, we have
\begin{align*} 
f_{k-1}(\Delta(G^{\pi}*\Gamma))&=\displaystyle\sum_{\ell=0}^{k}f_{\ell-1}(\Delta(G))\begin{pmatrix}
(r-\ell)+(|V(G)|-r) \\ k-\ell\end{pmatrix} \\
&=\displaystyle\sum_{\ell=0}^{k}f_{\ell-1}(\Delta(G))\begin{pmatrix}
|V(G)|-\ell \\ k-\ell
\end{pmatrix} \\
&=f_{k-1}(\Delta(W(G))).
\end{align*}
Hence, we obtain that $h_{k}(\Delta(W(G)))=h_{k}(\Delta(G^{\pi})*\Gamma)$, and thus, $h(\Delta(G^{\pi})^{\vee}*\Gamma)=h((\Delta(G^{\pi})*\Gamma)^{\vee})=h(\Delta(W(G)))$. Therefore, since $\Bbbk[\Delta(G^{\pi})^{\vee}*\Gamma]$ is a polynomial extension of $\Bbbk[\Delta(G^{\pi})]$, we obtain that 
\begin{align*}
F(S^{\prime}/J(W(G)), t) & = \dfrac{h(\Delta(W(G))^{\vee},t)}{(1-t)^{2|V(G)|-2}} \\[0.1cm]
& = \dfrac{h(\Delta(G^{\pi})^{\vee}*\Gamma,t)}{(1-t)^{2|V(G)|-2}} \\[0.1cm]
& = \dfrac{F(S/J(G^{\pi}), t)}{(1-t)^{|V(G)|-r}} \\[0.1cm] 
& = \dfrac{F(S^{\prime}/(J(W(G))+({\bf y})), t)}{(1-t)^{|V(G)|-r}}.
\end{align*}
By \cite[Corollary 3.2]{s0}, this equality implies that ${\bf y}$ is a regular sequence of $S^{\prime}/J(W(G))$. Therefore, the minimal free resolution of $S/J(G^{\pi})$ is the induced resolution of the minimal free resolution of $S^{\prime}/J(W(G))$. By \cite[Theorem 3.5]{cf}, the minimal free resolution of $J(W(G))$ is$$\mathbb{F}^{W(G)}:\cdots\longrightarrow F_{i}^{W(G)}\overset{d_{i}^{W(G)}}{\longrightarrow} F_{i-1}^{W(G)}\overset{d_{i-1}^{W(G)}}{\longrightarrow}\cdots \overset{d_{1}^{W(G)}}{\longrightarrow}F_{0}^{W(G)}\overset{d_{0}^{W(G)}}{\longrightarrow}J(W(G))\rightarrow0,$$where 
\begin{enumerate}
\item[-] whose $i$-th free module $F_{i}$ has a basis the symbols ${\bf f}(C^{\prime} ; \sigma)$ having multidegree ${\bf z}_{C^{\prime}}{\bf x}_{\sigma}$, where $C^{\prime}\in\mathcal{C}(W(G))$ and 
$\sigma\in\left(\begin{smallmatrix}\mathcal{C}(W(G) ; C^{\prime}) \\ i\end{smallmatrix}\right)$ \\
\item[-]and whose differential is given by $d_{0}^{W(G)}({\bf f}(C^{\prime} ; \sigma))={\bf z}_{C^{\prime}}$ for $i=0$ and for $i>0$ is defined as follows:$$d_{i}({\bf f}(C^{\prime} ; \sigma))=\displaystyle\sum_{x_{i,j}\in\sigma}(-1)^{\alpha(\sigma;x_{i,j})}[y_{i,j}{\bf f}((C^{\prime}\setminus\{y_{i,j}\})\cup\{x_{i,j}\};\sigma\setminus\{x_{i,j}\})-x_{i,j}{\bf f}(C^{\prime};\sigma\setminus\{x_{i,j}\})],$$
\end{enumerate}
where $\alpha(\sigma;x_{i,j})= |\{x_{p,q}\in\sigma\,\,: x_{p,q}>x_{i,j}\}|.$ By identifying $y_{i,j}$ with $w_{i}$ for all $i$ and Lemmas \ref{min}, \ref{compo},  we obtain that the minimal free resolution of $J(G^{\pi})$ is the complex $(\mathbb{F},d_{\bullet})$, as required. 
\end{proof}

As corollaries, we give graded Betti numbers, the projective dimension and the regularity of clique-whiskered graphs. 

\begin{cor}\label{Betti}
Assume Setup \ref{split}. Then we have 
\begin{enumerate}
\item $\beta_{i,i+|V(G)|}(J(G^{\pi}))=\beta_{i}(J(G^{\pi}))=\displaystyle\sum_{C\in{\rm Min}(G^{\pi})}
\begin{pmatrix}
|V(G)\setminus C| \\ i
\end{pmatrix}
\mbox{ for all }i,$
\item ${\rm pd}(J(G^{\pi}))=\max\{|V(G)\setminus C|\,\,:C\in{\rm Min}(G^{\pi})\}.$
\end{enumerate}
In particular, the graded Betti numbers of $J(G^{\pi})$ do not depend upon the characteristic of the underlying field $\Bbbk$. 
\end{cor}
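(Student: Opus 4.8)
The plan is to read everything off the explicit minimal free resolution $\mathbb{F}$ produced in Theorem \ref{free resol}, since for a minimal resolution the graded Betti numbers are exactly the ranks of the free modules together with the internal degrees of their basis elements. First I would record that, by Construction \ref{const1}, the module $F_i$ has a $\Bbbk$-basis indexed by the pairs $(C,\sigma)$ with $C\in{\rm Min}(G^{\pi})$ and $\sigma\in\binom{\mathcal{D}(G^{\pi};C)}{i}$. Since $\mathcal{D}(G^{\pi};C)=V(G)\setminus C$ by Setup \ref{split}, counting these pairs immediately gives $\beta_i(J(G^{\pi}))=\operatorname{rank}F_i=\sum_{C\in{\rm Min}(G^{\pi})}\binom{|V(G)\setminus C|}{i}$, which is the numerical content of part (1).

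Next I would pin down the single internal degree in which each $F_i$ lives. The basis element ${\bf f}(C;\sigma)$ carries multidegree ${\bf z}_C{\bf x}_{\sigma}$, whose total degree is $|C|+|\sigma|=|C|+i$. The crucial observation is that $|C|=|V(G)|$ for every $C\in{\rm Min}(G^{\pi})$. This holds because $G^{\pi}$ is Cohen--Macaulay by \cite[Corollary 3.5]{cn}, hence unmixed, so all of its minimal vertex covers share a common cardinality; and $V(G)$ itself is one such cover, since it meets every edge of $G$ as well as every whisker edge $\{x_{i,j},w_i\}$, while dropping any $x_{i,j}$ uncovers $\{x_{i,j},w_i\}$, so $V(G)$ is minimal of size $|V(G)|$. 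Consequently every generator of $F_i$ sits in degree $i+|V(G)|$, whence $\beta_{i,j}(J(G^{\pi}))=0$ unless $j=i+|V(G)|$ and $\beta_{i,i+|V(G)|}(J(G^{\pi}))=\beta_i(J(G^{\pi}))$; this finishes part (1).

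For part (2), the projective dimension is the largest $i$ with $F_i\neq0$, i.e. the largest $i$ for which $\binom{|V(G)\setminus C|}{i}\neq0$ for some $C$, which is exactly $\max\{|V(G)\setminus C|:C\in{\rm Min}(G^{\pi})\}$. The characteristic-independence is then immediate: the resolution $\mathbb{F}$ is built combinatorially from the data ${\rm Min}(G^{\pi})$ and $\mathcal{D}(G^{\pi};C)$ with the same ranks over any field $\Bbbk$, so none of the $\beta_{i,j}$ can depend on the characteristic.

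Since nearly all of this is bookkeeping on top of Theorem \ref{free resol}, the only genuinely substantive step is the uniform degree computation $|C|=|V(G)|$; the mild subtlety there is that it rests on unmixedness of $G^{\pi}$, which I would justify once through the Cohen--Macaulay input of \cite{cn} rather than re-derive combinatorially. Everything else is a direct transcription of the ranks and multidegrees of the complex in Construction \ref{const1}.
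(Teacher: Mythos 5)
Your proposal is correct and follows exactly the route the paper intends: the corollary is stated without proof as an immediate consequence of Theorem \ref{free resol}, obtained by reading the ranks and multidegrees off the basis $\{{\bf f}(C;\sigma)\}$ of Construction \ref{const1}. Your one substantive addition --- verifying that every $C\in{\rm Min}(G^{\pi})$ has $|C|=|V(G)|$ via unmixedness of $G^{\pi}$ and the observation that $V(G)$ is itself a minimal vertex cover --- is a correct and appropriate justification of the single-degree claim that the paper leaves implicit.
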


Thanks to Corollary \ref{Betti}, we can easily prove the result of \cite[Corollary 3.6]{lj}. 

\begin{cor}[\cite{lj}, Corollary 3.6]\label{reg of clique-whisker}
Let $G$ be a graph and let $\pi$ be a clique-partition. Then we have ${\rm reg}(S/I(G^{\pi}))={\rm im}(G^{\pi}).$
\end{cor}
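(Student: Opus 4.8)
The plan is to combine the projective-dimension formula of Corollary \ref{Betti} with Alexander duality, and then to identify the resulting combinatorial quantity with the induced matching number of $G^{\pi}$.

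First I would pass from the cover ideal to the edge ideal. Since $J(G^{\pi})$ is the Alexander dual of $I(G^{\pi})$, Terai's duality gives ${\rm reg}(I(G^{\pi}))={\rm pd}(S/J(G^{\pi}))$. Using ${\rm reg}(I(G^{\pi}))={\rm reg}(S/I(G^{\pi}))+1$ and ${\rm pd}(S/J(G^{\pi}))={\rm pd}(J(G^{\pi}))+1$, this rewrites as ${\rm reg}(S/I(G^{\pi}))={\rm pd}(J(G^{\pi}))$. Feeding in Corollary \ref{Betti}(2), I obtain
$${\rm reg}(S/I(G^{\pi}))=\max\{|V(G)\setminus C|\,\,: C\in{\rm Min}(G^{\pi})\},$$
so it remains to show that this maximum equals ${\rm im}(G^{\pi})$.

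Second, I would prove $\max\{|V(G)\setminus C|\}\leq{\rm im}(G^{\pi})$ by turning each minimal vertex cover into an induced matching. Fix $C\in{\rm Min}(G^{\pi})$ and put $\mathcal{D}=\mathcal{D}(G^{\pi};C)=V(G)\setminus C$. I would record three facts: (i) since $C\cap V(G)$ covers every edge of $G$ (a whisker cannot cover an edge of $G$), the set $\mathcal{D}$ is independent in $G$; (ii) any two vertices of a common clique $W_{i}$ are adjacent, so $\mathcal{D}$ meets each $W_{i}$ in at most one vertex; (iii) whenever $x_{i,j}\in\mathcal{D}$, the edge $\{x_{i,j},w_{i}\}$ forces $w_{i}\in C$. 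Writing $\mathcal{D}=\{x_{i_{1},j_{1}},\ldots,x_{i_{m},j_{m}}\}$ with pairwise distinct clique indices $i_{1},\ldots,i_{m}$, the whisker edges $\{x_{i_{k},j_{k}},w_{i_{k}}\}$ form a matching of size $m$, and I would check it is induced: the vertices $w_{i_{k}}$ are pairwise nonadjacent and each $w_{i_{k}}$ is adjacent only to $W_{i_{k}}$, while the $x_{i_{k},j_{k}}$ are pairwise nonadjacent by (i). Hence $G^{\pi}$ has an induced matching of size $m=|V(G)\setminus C|$, giving the desired inequality.

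Finally, the reverse bound ${\rm im}(G^{\pi})\leq{\rm reg}(S/I(G^{\pi}))$ is Katzman's general lower bound for the regularity of an edge ideal. Chaining the three relations yields
$${\rm im}(G^{\pi})\leq{\rm reg}(S/I(G^{\pi}))=\max\{|V(G)\setminus C|\,\,:C\in{\rm Min}(G^{\pi})\}\leq{\rm im}(G^{\pi}),$$
forcing equality throughout. The only delicate step is the verification in the second paragraph that the chosen whisker edges genuinely form an \emph{induced} matching; this rests entirely on $\mathcal{D}$ being an independent set of $G$ that meets each clique at most once, which is precisely where the clique-partition structure of $\pi$ is used, and I expect it to be the main (though routine) obstacle.
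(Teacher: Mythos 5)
Your proof is correct and follows essentially the same route as the paper: Katzman's lower bound, Terai's duality ${\rm reg}(S/I(G^{\pi}))={\rm pd}(J(G^{\pi}))$, the formula from Corollary \ref{Betti}(2), and the observation that $V(G)\setminus C$ is independent in $G$. The only difference is that you explicitly carry out the verification that the whisker edges over $V(G)\setminus C$ form an induced matching, a step the paper leaves implicit.
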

\begin{proof} 
It is known that ${\rm reg}(S/I(G^{\pi}))\geq{\rm im}(G^{\pi})$ by \cite[Lemma 2.2]{k}. 
Let $C$ be a minimal vertex cover of $G^{\pi}$ such that $|V(G)\setminus C|={\rm pd}(J(G^{\pi}))$. Since $C$ is a vertex cover of $G$, $V(G)\setminus C$ is an independent set of $G$. From \cite[Corollary 0.3]{t}, we have 
$${\rm reg}(S/I(G^{\pi}))={\rm pd}(J(G^{\pi}))=|V(G)\setminus C|\leq{\rm im}(G^{\pi}),$$
which completes the proof. 
\end{proof}

Finally, we give a formula for the Hilbert series of the local cohomology modules of the residue rings modulo the edge ideals of clique-whiskered graphs by Corollaries \ref{hilb} and \ref{Betti}. Notice that clique-whiskered graphs are Cohen--Macaulay and $\dim S/I(G^{\pi})=r$, where $r$ is the number of elements of the clique-partition. 

\begin{cor}
Let $G$ be a graph and let $\pi$ be a clique-partition. Then the Hilbert series of $H^{r}_{\mathfrak{m}}(S/I(G^{\pi}))$ is given by the following formula, where $r$ is the number of elements of $\pi$: 
\begin{align*}
F(H^r_{\mathfrak{m}}(S/I(G^{\pi})),t)
& =\displaystyle\sum_j\dim_\Bbbk[H^r_{\mathfrak{m}}(S/I(G^{\pi}))]_{\ell} \ t^{\ell} \\[0.15cm]
& =\displaystyle\sum_{i}\displaystyle\sum_{C\in{\rm Min}(G^{\pi})}
\begin{pmatrix}
|V(G)\setminus C| \\ i
\end{pmatrix}\left(\frac{t^{-1}}{1-t^{-1}}\right)^{r-i}.
\end{align*}
\end{cor}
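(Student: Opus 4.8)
The plan is to recognize the statement as a direct specialization of Corollary \ref{hilb} to the independence complex $\Delta=\Delta(G^{\pi})$, combined with the explicit Betti numbers from Corollary \ref{Betti}. First I would record the three structural facts that make the machinery applicable. Writing $\Delta=\Delta(G^{\pi})$, we have $\Bbbk[\Delta]=S/I(G^{\pi})$ since the edge ideal is the Stanley--Reisner ideal of the independence complex, and since the cover ideal is the Alexander dual of the edge ideal, the Alexander dual complex satisfies $I_{\Delta^{\vee}}=J(G^{\pi})$. The ambient polynomial ring $S$ has $n=|V(G^{\pi})|=|V(G)|+r$ variables, because $G^{\pi}$ is obtained from $G$ by adjoining the $r$ whisker vertices $w_{1},\dots,w_{r}$.

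Next, I would invoke the Cohen--Macaulayness of $G^{\pi}$, established in \cite[Corollary 3.5]{cn} and already used in Theorem \ref{regular}. Since $\dim S/I(G^{\pi})=r$, the local cohomology module $H^{j}_{\mathfrak{m}}(S/I(G^{\pi}))$ vanishes for all $j\neq r$, so the only instance of Corollary \ref{hilb} that carries information is $j=r$. Applying the last line of Corollary \ref{hilb} with $j=r$ and $n=|V(G)|+r$ then yields
\begin{align*}
F(H^{r}_{\mathfrak{m}}(S/I(G^{\pi})),t)
=\sum_{i}\beta_{i,\,n-r+i}(J(G^{\pi}))\left(\frac{t^{-1}}{1-t^{-1}}\right)^{r-i}.
\end{align*}
The index of the Betti number simplifies as $n-r+i=|V(G)|+i$, which is precisely the internal degree appearing in Corollary \ref{Betti}.

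Finally I would substitute the value $\beta_{i,\,i+|V(G)|}(J(G^{\pi}))=\sum_{C\in{\rm Min}(G^{\pi})}\binom{|V(G)\setminus C|}{i}$ from Corollary \ref{Betti} into the displayed expression, which reproduces the claimed formula verbatim. I do not expect any genuine obstacle here: the whole argument is a bookkeeping composition of two corollaries together with the vanishing of all but the top local cohomology. The one point that requires care is the index shift, namely verifying that the number of variables $n$ exceeds $|V(G)|$ by exactly $r$ so that $n-r+i$ collapses to $i+|V(G)|$. This is exactly the degree in which the cover ideal is generated, reflecting the fact that every minimal vertex cover of $G^{\pi}$ has cardinality $|V(G)|$ and hence that $J(G^{\pi})$ has a linear resolution, consistent with the concentration of its Betti numbers in the strand $j=i+|V(G)|$.
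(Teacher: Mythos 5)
Your proposal is correct and follows exactly the route the paper intends: the corollary is stated as an immediate consequence of Corollary \ref{hilb} applied with $j=r$ and $n=|V(G)|+r$ (using Cohen--Macaulayness and $\dim S/I(G^{\pi})=r$), followed by substitution of the Betti numbers from Corollary \ref{Betti}. The index bookkeeping $n-r+i=|V(G)|+i$ is verified correctly, so there is nothing to add.
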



\section{Applications to Cohen--Macaulay chordal graphs, clique corona graphs and Cohen--Macaulay Cameron--Walker graphs}
In this section, we investigate Cohen--Macaulay chordal graphs, clique corona graphs and Cohen--Macaulay Cameron--Walker graphs by using Theorem \ref{free resol}. First, we treat Cohen--Macaulay chordal graphs. For a Cohen--Macaulay chordal graphs, it is known that the following characterization by Herzog, Hibi and Zheng. 

\begin{thm}[\cite{hhz}, Theorem 2.1]
Let $G$ be a chordal graph and let $F_{1},\ldots, F_{m}$ be maximal cliques which admit a free vertex. 
Then the following conditions are equivalent: 
\begin{enumerate}
\item $G$ is Cohen--Macaulay, 
\item $G$ is unmixed, 
\item $V(G)$ is the disjoint union of $F_{1},\ldots, F_{m}$. 
\end{enumerate}
\end{thm}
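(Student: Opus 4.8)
The plan is to prove the cycle of implications $(1)\Rightarrow(2)\Rightarrow(3)\Rightarrow(1)$. Throughout I would use the standard translation that a vertex $v$ is a free vertex of a maximal clique $F$ precisely when $v$ is simplicial, in which case $F=N_G[v]$; thus the maximal cliques admitting a free vertex are exactly the sets $N_G[v]$ with $v$ simplicial, and a chordal graph always possesses such a vertex (Dirac). The implication $(1)\Rightarrow(2)$ is then the general fact that a Cohen--Macaulay ring is equidimensional: since $S/I(G)=\Bbbk[\Delta(G)]$, Cohen--Macaulayness forces the independence complex $\Delta(G)$ to be pure, i.e. all maximal independent sets have the same cardinality, which is exactly unmixedness of $G$.

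For $(3)\Rightarrow(1)$ I would argue as follows. If $V(G)=F_1\sqcup\cdots\sqcup F_m$ with each $F_i$ a maximal clique carrying a free vertex $v_i$, then any maximal independent set $A$ meets each $F_i$ in at most one vertex (it is a clique) and in at least one vertex (otherwise $A\cup\{v_i\}$ would be independent, contradicting maximality), so $A$ is a transversal of the partition and $|A|=m$. Hence $\Delta(G)$ is pure of dimension $m-1$. Since $G$ is chordal, $\Delta(G)$ is vertex decomposable (Woodroofe), so $S/I(G)$ is sequentially Cohen--Macaulay; a pure sequentially Cohen--Macaulay complex is Cohen--Macaulay, giving $(1)$. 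Alternatively one runs the vertex-decomposability induction directly using the simplicial vertex $v_1$ as a shedding vertex: its link is $\Delta(G\setminus N[v_1])=\Delta(G|_{F_2\cup\cdots\cup F_m})$, which again has the required disjoint-clique form and is handled by induction, while its deletion $\Delta(G\setminus v_1)$ is vertex decomposable because $G\setminus v_1$ is chordal.

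The substance is $(2)\Rightarrow(3)$, which I would split into disjointness and coverage of the free-vertex cliques. For disjointness, suppose two distinct such cliques $F=N[v]$ and $F'=N[v']$ share a vertex $u$; then $u\neq v,v'$, the vertex $u$ is adjacent to both $v$ and $v'$, while $v\not\sim v'$. Extend $\{u\}$ to a maximal independent set $B$; since $v,v'\in N(u)$ we have $v,v'\notin B$, and because $F,F'\subseteq N[u]$ while $B\cap N[u]=\{u\}$, the set $(B\setminus\{u\})\cup\{v,v'\}$ is again independent, of cardinality $|B|+1$. Extending it produces a maximal independent set strictly larger than $B$, contradicting unmixedness. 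For coverage I would induct on $|V(G)|$: pick a simplicial vertex $v$, set $F=N[v]$ and $G'=G|_{V\setminus F}$. One checks that $G'$ is chordal and that sending a maximal independent set $A'$ of $G'$ to $A'\cup\{v\}$ yields a maximal independent set of $G$, so that unmixedness of $G$ descends to $G'$; the induction then writes $V(G')$ as a disjoint union of its free-vertex cliques, and adjoining $F$ covers $V(G)$.

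The main obstacle is the transfer step in this coverage argument: a free-vertex (simplicial) clique of $G'$ need not a priori be a maximal clique of $G$, nor need its simplicial vertex remain simplicial in $G$, since deleting $F$ can both destroy maximality (some vertex of $F$ may be adjacent to the whole clique) and create new simplicial vertices. I expect to resolve this by showing, with the disjointness result in hand, that any such clique of $G'$ either already is a free-vertex clique of $G$ or its failure forces two free-vertex cliques of $G$ to overlap---contradicting disjointness---so that the inductive partition of $V(G')$ lifts verbatim to the free-vertex cliques of $G$. Pinning down this dichotomy cleanly, rather than the two routine endpoints of the cycle, is where the real work lies.
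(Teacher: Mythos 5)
This statement is quoted from Herzog--Hibi--Zheng \cite[Theorem 2.1]{hhz} and the paper supplies no proof of its own, so there is nothing internal to compare against; judging your proposal on its own terms, it is an outline with a genuine hole rather than a proof. The pieces you do carry out are fine: $(1)\Rightarrow(2)$ is standard, the disjointness half of $(2)\Rightarrow(3)$ (extending $\{u\}$ to a maximal independent set and swapping in $v,v'$) is correct, and $(3)\Rightarrow(1)$ works via purity of $\Delta(G)$ together with Woodroofe/Francisco--Van Tuyl and the fact that a pure sequentially Cohen--Macaulay complex is Cohen--Macaulay. (Your ``alternative'' direct induction for $(3)\Rightarrow(1)$ is, however, wrong as stated: a simplicial vertex $v_1$ with $N[v_1]=F_1$ need not be a shedding vertex --- for the path $v_1-u-w-v_2$ the set $\{w\}$ is a facet of $\mathrm{del}_{\Delta}(v_1)$ but not of $\Delta$ --- so keep only the first route.)

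The real gap is the coverage half of $(2)\Rightarrow(3)$, which is the substance of the theorem and which you explicitly leave unresolved. Deleting $F=N[v]$ for a simplicial $v$ and inducting on $G'=G|_{V\setminus F}$ founders on exactly the transfer problem you name: a maximal clique of $G'$ with a free vertex need not be a maximal clique of $G$ (a vertex of $F$ may dominate it), and its simplicial vertex in $G'$ need not be simplicial in $G$. The ``dichotomy'' you hope will repair this --- that any failure forces two free-vertex cliques of $G$ to overlap --- is not formulated, and it is not clear it is even the right statement: the failure mode involves a vertex of $F$ adjacent to all of a clique of $G'$, which does not obviously produce two overlapping free-vertex cliques of $G$. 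Unmixedness of $G$ (not just of $G'$) must be invoked again at this point, and the argument that does so is precisely what is missing. Until that step is pinned down, the proposal establishes only three of the four implications and hence does not prove the theorem.
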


By using this characterization, we have the following lemma. 

\begin{lemma}\label{chordal}
Let $G$ be a Cohen--Macaulay chordal graph and let $F_{1},\ldots, F_{m}$ be maximal cliques which admit a free vertex. Then we have $$G=(G-\{w_{1},\ldots, w_{m}\})^{\pi},$$ where each $w_{i}$ is a free vertex contained in $F_{i}$ and $\pi=\{F_{i}\setminus\{w_{i}\}\,\,: 1\leq i\leq m\}.$
\end{lemma}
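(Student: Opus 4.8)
The plan is to verify the claimed identity $G = (G - \{w_1,\ldots,w_m\})^\pi$ by checking that the right-hand side, built from the clique-whiskering construction in Definition~\ref{clique-whisker}, reproduces exactly the vertex set and edge set of $G$. Write $H = G - \{w_1,\ldots,w_m\}$ for the graph obtained by deleting the chosen free vertices, and set $W_i = F_i \setminus \{w_i\}$. The first thing I would establish is that $\pi = \{W_1,\ldots,W_m\}$ is genuinely a clique partition of $H$ in the sense required by the definition: each $W_i$ is a clique of $H$ (being a subset of the clique $F_i$, with the induced subgraph remaining complete after deleting a vertex), and by condition (3) of the Herzog--Hibi--Zheng theorem the sets $F_1,\ldots,F_m$ partition $V(G)$, so the $W_i$ are pairwise disjoint and their union is $V(G) \setminus \{w_1,\ldots,w_m\} = V(H)$. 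This is the routine but essential bookkeeping that legitimizes forming $H^\pi$ at all.

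Next I would compare vertex sets. By construction $H^\pi$ has vertex set $V(H) \cup \{w_1',\ldots,w_m'\}$, where each $w_i'$ is the new whisker vertex attached to $W_i$. Since $V(H) = V(G) \setminus \{w_1,\ldots,w_m\}$ and we are attaching exactly $m$ new vertices, identifying the whisker vertex $w_i'$ of $H^\pi$ with the deleted free vertex $w_i$ of $G$ gives a bijection $V(H^\pi) \to V(G)$. The content of the lemma is then that this identification is an isomorphism of graphs, so the real work is the edge comparison.

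For the edges I would argue by the two inclusions, using the defining formula $E(H^\pi) = E(H) \cup \bigl(\bigcup_i \{\{x, w_i'\} : x \in W_i\}\bigr)$. The edges of $E(H)$ are exactly the edges of $G$ not incident to any deleted $w_i$, so these match on the nose. For the whisker edges $\{x, w_i'\}$ with $x \in W_i = F_i \setminus \{w_i\}$: under the identification $w_i' = w_i$ these become $\{x, w_i\}$ with $x \in F_i$, and since $F_i$ is a clique of $G$ every such pair is an edge of $G$. Conversely, I would use that $w_i$ is a \emph{free} vertex of $F_i$, meaning $w_i$ lies in no maximal clique other than $F_i$; hence every edge of $G$ incident to $w_i$ has its other endpoint in $F_i \setminus \{w_i\} = W_i$, and each such edge is recovered as a whisker edge of $H^\pi$. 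This two-sided matching is the heart of the proof.

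The main obstacle I anticipate is the converse direction of the edge inclusion, namely showing no edges of $G$ are lost or spuriously created: one must confirm that the deleted vertices $w_i$ have \emph{all} their $G$-neighbors inside $W_i$ (so that deleting them destroys no edge that is not restored as a whisker) and that there are no edges of $G$ between distinct free vertices $w_i$ and $w_j$. Both facts hinge on the precise meaning of ``free vertex'' in a maximal clique of a chordal graph---a free vertex belongs to a unique maximal clique---combined with the disjointness of the $F_i$ given by the Cohen--Macaulay characterization. Once these adjacency constraints are pinned down, the edge sets coincide and the stated equality follows.
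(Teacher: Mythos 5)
Your proof is correct and follows exactly the direct verification the paper has in mind; in fact the paper states this lemma with no proof at all, treating it as immediate from the Herzog--Hibi--Zheng characterization. Your edge-by-edge check --- in particular, using that a free vertex lies in a unique maximal clique so that all neighbors of $w_i$ lie in $F_i\setminus\{w_i\}$, and that the disjointness of the $F_i$ rules out edges between distinct free vertices --- supplies precisely the details the paper omits.
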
 

Thanks to Theorem \ref{free resol} and Lemma \ref{chordal}, we give an explicit minimal free resolution of 
the cover ideals of Cohen--Macaulay chordal graphs. 

\begin{thm}\label{min free of chordal}
Let $G$ be a Cohen--Macaulay chordal graph. Then the complex $(F, d_{\bullet})$ given in Construction \ref{const1} is a minimal free resolution of $J(G)$ under the assumption on Lemma \ref{chordal}. 
\end{thm}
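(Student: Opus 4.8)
The plan is to reduce Theorem \ref{min free of chordal} entirely to the already-established Theorem \ref{free resol}. The key observation is that Lemma \ref{chordal} exhibits any Cohen--Macaulay chordal graph $G$ as a clique-whiskered graph: writing $G = (G - \{w_{1},\ldots,w_{m}\})^{\pi}$ with $\pi = \{F_{i}\setminus\{w_{i}\} : 1\leq i\leq m\}$, the graph $H := G - \{w_{1},\ldots,w_{m}\}$ together with the clique partition $\pi$ satisfies Definition \ref{clique-whisker}, and the whiskers $w_{i}$ play exactly the role of the added vertices in the clique-whiskering construction. Thus $G = H^{\pi}$ in the notation of Setup \ref{split}, and $J(G) = J(H^{\pi})$.

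Once this identification is made, the proof is essentially a citation. First I would verify that $\pi$ is genuinely a clique partition of $H$: by part (3) of the Herzog--Hibi--Zheng theorem, $V(G)$ is the disjoint union of the maximal cliques $F_{1},\ldots,F_{m}$, so removing one free vertex $w_{i}$ from each $F_{i}$ leaves the sets $F_{i}\setminus\{w_{i}\}$ forming a disjoint partition of $V(H) = V(G)\setminus\{w_{1},\ldots,w_{m}\}$, and each $F_{i}\setminus\{w_{i}\}$ is a clique of $H$ since it is an induced subgraph of the complete graph $G|_{F_{i}}$. Next I would confirm that the edge set of $H^{\pi}$ matches that of $G$: the whisker edges $\{x_{i,j}, w_{i}\}$ for $x_{i,j}\in F_{i}\setminus\{w_{i}\}$ are precisely the edges of $G$ joining $w_{i}$ to the rest of its clique $F_{i}$ (here we use that $w_{i}$ is a \emph{free} vertex, so its only neighbors lie in $F_{i}$), and the edges of $H$ are the remaining edges of $G$. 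With $G = H^{\pi}$ established, Theorem \ref{free resol} applied to $H$ with clique partition $\pi$ says exactly that the complex $(\mathbb{F}, d_{\bullet})$ of Construction \ref{const1} is a minimal free resolution of $J(H^{\pi}) = J(G)$, which is the desired conclusion.

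The only subtlety, and the step I would be most careful about, is checking that $w_{i}$ being a free vertex forces its neighborhood in $G$ to be contained in $F_{i}$, so that no spurious edges from $w_{i}$ are missed when we declare them to be whisker edges. A free vertex of $F_{i}$ is one belonging to no other maximal clique, so all of its neighbors lie in $F_{i}$; this guarantees that attaching $w_{i}$ as a whisker to the clique $F_{i}\setminus\{w_{i}\}$ recovers exactly the edges incident to $w_{i}$ in $G$, with none left over and none invented. I would also note in passing that the ordering of vertices required by Construction \ref{const1} is simply inherited from any chosen ordering of the cliques $F_{i}\setminus\{w_{i}\}$, so no additional compatibility needs to be arranged.

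There is essentially no hard analytic or homological content remaining, since the construction and minimality of the resolution were already carried out in full generality in Theorem \ref{free resol}; the entire burden of this theorem is the combinatorial translation provided by Lemma \ref{chordal}. Accordingly, the proof I would write is short: invoke Lemma \ref{chordal} to write $G = H^{\pi}$, verify the clique-partition and edge-set conditions of Definition \ref{clique-whisker} as above, and then apply Theorem \ref{free resol} directly.
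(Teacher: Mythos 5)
Your proposal is correct and matches the paper's approach exactly: the paper also derives this theorem immediately by combining Lemma \ref{chordal} (which exhibits $G$ as the clique-whiskered graph $(G-\{w_{1},\ldots,w_{m}\})^{\pi}$) with Theorem \ref{free resol}, offering no further argument. Your additional verification that the free vertices' neighborhoods lie inside their cliques is a reasonable elaboration of why Lemma \ref{chordal} holds, but it introduces no new ideas beyond what the paper intends.
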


We give corollaries for Cohen--Macaulay chordal graphs by using Theorem \ref{min free of chordal}. 

\begin{cor}
Let $G$ be a Cohen--Macaulay chordal graph and let $F_{1},\ldots, F_{m}$ be maximal cliques which admit a free vertex, $w_{i}$ be a free vertex contained in $F_{i}$ for all $i$. Then we have 
\begin{enumerate}
\item $\beta_{i}(J(G))=\displaystyle\sum_{C\in{\rm Min}(G)}\begin{pmatrix}|V(G)\setminus(\{w_{1},\ldots, w_{m}\}\cup C)| \\ i\end{pmatrix}\mbox{ for all }i,$\item ${\rm pd}(J(G^{\pi}))=\max\{|V(G)\setminus\{w_{1},\ldots, w_{m}\}\cup C)|\,\,:C\in{\rm Min}(G)\}.$
\end{enumerate}
In particular, the graded Betti numbers of $J(G)$ do not depend upon the characteristic of the underlying field $\Bbbk$
\end{cor}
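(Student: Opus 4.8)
The plan is to deduce both formulas directly from Corollary \ref{Betti}, reinterpreting $G$ as a clique-whiskered graph by means of Lemma \ref{chordal}. Set $H = G - \{w_{1},\ldots,w_{m}\}$ and $\pi = \{F_{i}\setminus\{w_{i}\} : 1\leq i\leq m\}$; then Lemma \ref{chordal} gives $G = H^{\pi}$, with each $w_{i}$ serving as the whisker vertex attached to the clique $F_{i}\setminus\{w_{i}\}$ of $H$. In the notation of Setup \ref{split} and Corollary \ref{Betti}, the base graph is $H$ and the clique-whiskered graph is $G = H^{\pi}$, so that $V(H) = V(G)\setminus\{w_{1},\ldots,w_{m}\}$ and ${\rm Min}(H^{\pi}) = {\rm Min}(G)$.

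With this dictionary, I would apply Corollary \ref{Betti}(1) to $H^{\pi}$, which yields $\beta_{i}(J(G)) = \sum_{C\in{\rm Min}(G)}\binom{|V(H)\setminus C|}{i}$. The only remaining point is the bookkeeping identity $V(H)\setminus C = (V(G)\setminus\{w_{1},\ldots,w_{m}\})\setminus C = V(G)\setminus(\{w_{1},\ldots,w_{m}\}\cup C)$, which converts this into formula (1). The projective dimension formula (2) follows in exactly the same fashion from Corollary \ref{Betti}(2), with the binomial sum replaced by $\max\{|V(H)\setminus C| : C\in{\rm Min}(G)\}$ and the same substitution applied inside.

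Finally, the characteristic independence of the graded Betti numbers of $J(G)$ is inherited verbatim from the corresponding assertion in Corollary \ref{Betti}: the formula just established writes each $\beta_{i}(J(G))$ as a sum of binomial coefficients indexed by the minimal vertex covers of $G$ together with the distinguished free vertices, with no reference to $\Bbbk$.

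Since the argument is essentially a translation exercise, I do not anticipate any serious obstacle. The one place demanding care is the notational matching of Corollary \ref{Betti} — where the relevant ground set is that of the \emph{base} graph $H$, not of the whiskered graph $G = H^{\pi}$ — so that the set $\{w_{1},\ldots,w_{m}\}$ is correctly excised inside the binomial coefficient. I would also remark that although the correspondence of Lemma \ref{chordal} rests on a choice of the maximal cliques $F_{i}$ and free vertices $w_{i}$, the invariants $\beta_{i}(J(G))$ and ${\rm pd}(J(G))$ are intrinsic to $G$, so any admissible choice returns the same values.
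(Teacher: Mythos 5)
Your proposal is correct and follows essentially the same route as the paper: the paper obtains this corollary by specializing its general clique-whiskered result (Theorem \ref{min free of chordal}, i.e.\ Theorem \ref{free resol} via Lemma \ref{chordal}), and your derivation through Corollary \ref{Betti} with base graph $H=G-\{w_{1},\ldots,w_{m}\}$ is exactly that specialization. The translation $V(H)\setminus C=V(G)\setminus(\{w_{1},\ldots,w_{m}\}\cup C)$ and the identification ${\rm Min}(H^{\pi})={\rm Min}(G)$ are the only points requiring care, and you handle both correctly.
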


\begin{cor}
Let $G$ be a Cohen--Macaulay chordal graph and let $F_{1},\ldots, F_{m}$ be maximal cliques which admit a free vertex, $w_{i}$ be a free vertex contained in $F_{i}$ for all $i$, and let $S=\Bbbk[V(G)]$. Then the Hilbert series of $H^{m}_{\mathfrak{m}}(S/I(G))$ is given by the following formula: 
\begin{align*}
F(H^m_{\mathfrak{m}}(S/I(G)),t)&=\displaystyle\sum_j\dim_\Bbbk[H^m_{\mathfrak{m}}(S/I(G))]_{\ell} \ t^{\ell} \\[0.15cm]
& =\displaystyle\sum_{i}\displaystyle\sum_{C\in{\rm Min}(G)}
\begin{pmatrix}
|(V(G)\setminus\{w_{1},\ldots, w_{m}\})\setminus C| \\ i
\end{pmatrix}\left(\frac{t^{-1}}{1-t^{-1}}\right)^{m-i}.
\end{align*}
\end{cor}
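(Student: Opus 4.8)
The plan is to obtain this formula as a direct specialization of the general Hilbert series formula for clique-whiskered graphs established earlier in this section, combined with Lemma \ref{chordal}. First I would invoke Lemma \ref{chordal} to write $G = H^{\pi}$, where $H = G - \{w_{1},\ldots, w_{m}\}$ and $\pi = \{F_{i}\setminus\{w_{i}\} : 1\leq i\leq m\}$ is a clique partition of $H$ with exactly $m$ parts. Thus the Cohen--Macaulay chordal graph $G$ is precisely the clique-whiskered graph of the base graph $H$, with the free vertices $w_{1},\ldots, w_{m}$ playing the role of the whisker vertices, so that $V(H) = V(G)\setminus\{w_{1},\ldots, w_{m}\}$ and ${\rm Min}(H^{\pi}) = {\rm Min}(G)$. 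Applying the general Hilbert series corollary to $H$ with its partition $\pi$, the number $r$ of parts becomes $m$, and the quantity $|V(H)\setminus C|$ appearing there becomes $|(V(G)\setminus\{w_{1},\ldots, w_{m}\})\setminus C|$. Substituting these identifications directly yields the asserted formula.

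For completeness I would also record the underlying mechanism, which reproves the statement from scratch and makes the degree bookkeeping transparent. Since $G = H^{\pi}$ is Cohen--Macaulay with $\dim S/I(G) = m$, the local cohomology $H^{j}_{\mathfrak{m}}(S/I(G))$ vanishes for $j\neq m$, so only $H^{m}_{\mathfrak{m}}$ contributes. Writing $n = |V(G)| = |V(H)| + m$ and applying Corollary \ref{hilb} to $\Delta = \Delta(G)$ --- so that $\Bbbk[\Delta] = S/I(G)$ and $I_{\Delta^{\vee}} = J(G)$ --- gives
$$F(H^{m}_{\mathfrak{m}}(S/I(G)), t) = \sum_{i}\beta_{i,\, n-m+i}(J(G))\left(\frac{t^{-1}}{1-t^{-1}}\right)^{m-i}.$$
The key observation is that $n-m+i = |V(H)| + i$, which is exactly the single internal degree in which the Betti numbers of $J(G) = J(H^{\pi})$ are concentrated by Corollary \ref{Betti}; hence $\beta_{i,\,n-m+i}(J(G)) = \beta_{i}(J(G)) = \sum_{C\in{\rm Min}(G)}\binom{|(V(G)\setminus\{w_{1},\ldots, w_{m}\})\setminus C|}{i}$. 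Substituting this into the display completes the argument.

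The only genuine care needed is bookkeeping, not homological input. One must keep the base graph $H$ and the clique-whiskered graph $G = H^{\pi}$ notationally distinct, verify via Cohen--Macaulayness that no cohomological degree other than $m$ appears, and confirm that the degree shift $n-m+i$ lands precisely on the unique nonvanishing strand of the linear resolution of $J(G)$ (whose $i$-th free module is generated in degree $|V(H)| + i$, as one reads off from Construction \ref{const1}). Once these identifications are in place, the result follows from Corollaries \ref{hilb} and \ref{Betti} together with Lemma \ref{chordal}, with no further computation required.
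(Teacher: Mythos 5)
Your proposal is correct and matches the paper's (implicit) argument: the paper states this corollary as a direct consequence of Lemma \ref{chordal} together with the general clique-whiskered results of Section \ref{resol of clique}, which is exactly your specialization $G=H^{\pi}$ with $r=m$ and $V(H)=V(G)\setminus\{w_{1},\ldots,w_{m}\}$. Your supplementary derivation via Corollaries \ref{hilb} and \ref{Betti}, including the check that $n-m+i=|V(H)|+i$ lands on the unique nonzero strand of $J(G)$, is precisely the mechanism the paper uses to obtain its general Hilbert series formula, so no new content or gap is present.
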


Next, we investigate clique corona graphs, which were introduced by Hoang and Pham in the context of combinatorial commutative algebra. Let us recall the definition of clique corona graphs. 

\begin{defi}[\cite{hp}, Section 2]
For a graph $G$ on the vertex set $X_{[h]}=\{x_{1},\ldots, x_{h}\}$, we set$$\mathcal{H}=\{H_{i}\,\,: x_{i}\in V(G)\},$$where $H_{i}$ is a non-empty graph indexed by the vertex $x_{i}$. The {\it corona graph} $G\circ\mathcal{H}$ of $G$ and $\mathcal{H}$ is the disjoint union of $G$ and $H_{i}$, with additional edges joining each vertex $x_{i}$ to all vertices $H_{i}$. In particular, $G\circ\mathcal{H}$ is called the {\it clique corona graph}, if $H_{i}$ is the complete graph for all $i$. 
\end{defi}

By the definition of clique-corona graphs, we have the following lemma. 

\begin{lemma}\label{corona}
Let $G$ be a graph on the vertex set $X_{[h]}=\{x_{1},\ldots, x_{h}\}$ and let $\mathcal{H}=\{H_{i}\,\,: x_{i}\in V(G)\}$. We set $H_{i}=K_{m_{i}}$ on the vertex set $\{y_{i,1},\ldots, y_{i,m_{i}}\}$. Then we have
$$G\circ\mathcal{H}=(G\circ\mathcal{H}-\{y_{1,m_{1}}, \ldots, y_{h,m_{h}}\})^{\pi},$$
where $\pi=\{(K_{m_{i}}\cup\{x_{i}\})\setminus\{y_{i,m_{i}}\}\,\,: 1\leq i\leq h\}$. 
\end{lemma}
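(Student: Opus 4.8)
The plan is to exhibit an explicit isomorphism between $G\circ\mathcal{H}$ and the clique-whiskered graph $(G')^{\pi}$, where $G'=G\circ\mathcal{H}-\{y_{1,m_1},\ldots,y_{h,m_h}\}$, by identifying the whisker vertex $w_i$ attached to the $i$-th clique of $\pi$ with the deleted vertex $y_{i,m_i}$. Since both sides are built directly from the definitions, the argument is essentially a careful bookkeeping of vertices and edges rather than anything substantial.

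First I would record the relevant vertex and edge sets explicitly. The vertex set of $G\circ\mathcal{H}$ is $\{x_1,\ldots,x_h\}\cup\bigcup_{i}\{y_{i,1},\ldots,y_{i,m_i}\}$, and its edges are those of $G$, all pairs within each $K_{m_i}$, and the corona edges $\{x_i,y_{i,j}\}$ for $1\le j\le m_i$. Deleting $y_{1,m_1},\ldots,y_{h,m_h}$ then yields $G'$, whose vertex set is $\{x_1,\ldots,x_h\}\cup\bigcup_i\{y_{i,1},\ldots,y_{i,m_i-1}\}$ and whose edges are the edges of $G\circ\mathcal{H}$ not incident to any $y_{i,m_i}$.

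Next I would verify that $\pi$ is a genuine clique partition of $G'$. Writing $W_i=(K_{m_i}\cup\{x_i\})\setminus\{y_{i,m_i}\}=\{x_i,y_{i,1},\ldots,y_{i,m_i-1}\}$, the sets $W_1,\ldots,W_h$ are pairwise disjoint and their union is exactly $V(G')$; moreover each $W_i$ induces a complete graph in $G'$, since the vertices $y_{i,1},\ldots,y_{i,m_i-1}$ are mutually adjacent inside $K_{m_i}$ and $x_i$ is joined to each of them by a corona edge. (The degenerate case $m_i=1$ gives $W_i=\{x_i\}$, a single vertex, which is trivially a clique and corresponds to an ordinary whisker.)

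Finally I would construct $(G')^{\pi}$ from Definition \ref{clique-whisker}: it adjoins a new vertex $w_i$ to each clique $W_i$ and joins $w_i$ to every vertex of $W_i$. The crucial step is the identification $w_i\leftrightarrow y_{i,m_i}$ together with the identity map on $V(G')$, and the key observation making it work is that the neighborhood of $y_{i,m_i}$ in $G\circ\mathcal{H}$ is exactly $W_i$, namely $\{x_i\}$ together with $\{y_{i,j}:j<m_i\}$. Under this identification the whisker edges $\{w_i,u\}$ with $u\in W_i$ correspond precisely to the edges of $G\circ\mathcal{H}$ incident to $y_{i,m_i}$, while $E(G')$ accounts for all remaining edges. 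The main (though routine) obstacle is thus the edge count: confirming that the whisker edges and the deleted-vertex edges match bijectively across the two descriptions, so that no edge is omitted or double-counted and the two graphs coincide on the nose.
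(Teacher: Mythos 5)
Your proposal is correct: the identification $w_i\leftrightarrow y_{i,m_i}$, the check that each $W_i=\{x_i,y_{i,1},\ldots,y_{i,m_i-1}\}$ is a clique partitioning $V(G')$, and the observation that the neighborhood of $y_{i,m_i}$ in $G\circ\mathcal{H}$ is exactly $W_i$ are precisely what is needed. The paper states this lemma without proof, treating it as immediate from the definitions, so your argument is simply a careful written-out version of the intended verification.
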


Thanks to Theorem \ref{free resol} and Lemma \ref{corona}, we give an explicit minimal free resolution of 
the cover ideals of clique-corona graphs. 

\begin{thm}\label{min free of corona}
Let $G$ be a graph on the vertex set $X_{[h]}=\{x_{1},\ldots, x_{h}\}$ and let $\mathcal{H}=\{H_{i}\,\,: x_{i}\in V(G)\}$. We set $H_{i}=K_{m_{i}}$ on the vertex set $\{y_{i,1},\ldots, y_{i,m_{i}}\}$. Then the complex $(F, d_{\bullet})$ given in Construction \ref{const1} is a minimal free resolution of $J(G\circ\mathcal{H})$ under the assumption on Lemma \ref{corona}. 
\end{thm}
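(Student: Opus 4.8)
The plan is to obtain the statement as an immediate consequence of Theorem~\ref{free resol}, using the identification of clique corona graphs with clique-whiskered graphs supplied by Lemma~\ref{corona}. Once we recognize $G\circ\mathcal{H}$ as a clique-whiskered graph, the complex in Construction~\ref{const1} is already known to be a minimal free resolution by Theorem~\ref{free resol}, so nothing further needs to be computed.

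First I would apply Lemma~\ref{corona} to write $G\circ\mathcal{H}=(G')^{\pi}$, where $G'=G\circ\mathcal{H}-\{y_{1,m_{1}},\ldots,y_{h,m_{h}}\}$ and $\pi=\{(K_{m_{i}}\cup\{x_{i}\})\setminus\{y_{i,m_{i}}\}\,\,:\,1\leq i\leq h\}$. This realizes the clique corona graph as the clique-whiskered graph associated with the base graph $G'$ and the clique partition $\pi$; concretely, each deleted vertex $y_{i,m_{i}}$ serves as the whisker vertex $w_{i}$ attached to the clique $W_{i}=(K_{m_{i}}\cup\{x_{i}\})\setminus\{y_{i,m_{i}}\}$. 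Next I would invoke Theorem~\ref{free resol} for the pair $(G',\pi)$: since that theorem asserts that the complex $(\mathbb{F},d_{\bullet})$ built in Construction~\ref{const1} from the combinatorial data of $(G')^{\pi}$ is a minimal free resolution of $J((G')^{\pi})$, and since $(G')^{\pi}=G\circ\mathcal{H}$, it follows at once that $(\mathbb{F},d_{\bullet})$ is a minimal free resolution of $J(G\circ\mathcal{H})$.

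The only step requiring attention---and hence the main, if modest, obstacle---is to reconcile the notation of Construction~\ref{const1} with that of the corona setting. In Construction~\ref{const1} the whisker vertices are written $w_{i}$ and the clique vertices $x_{i,j}$, whereas here the whiskers are the deleted vertices $y_{i,m_{i}}$ and each clique $W_{i}$ is made up of $x_{i}$ together with $y_{i,1},\ldots,y_{i,m_{i}-1}$. Once the correspondence $w_{i}\leftrightarrow y_{i,m_{i}}$ is fixed and the remaining clique vertices are relabeled as the $x_{i,j}$, Construction~\ref{const1} produces precisely the complex in the statement, which completes the argument.
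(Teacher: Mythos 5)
Your proposal is correct and follows exactly the route the paper intends: the paper states this theorem as an immediate consequence of Lemma~\ref{corona} (which realizes $G\circ\mathcal{H}$ as the clique-whiskered graph $(G\circ\mathcal{H}-\{y_{1,m_{1}},\ldots,y_{h,m_{h}}\})^{\pi}$) together with Theorem~\ref{free resol}, giving no further argument. Your additional remark about relabeling the whisker vertices $w_{i}\leftrightarrow y_{i,m_{i}}$ is a sensible clarification of the same identification.
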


We give corollaries for clique corona graphs by using Theorem \ref{min free of corona}. 

\begin{cor}\label{clique-Betti}
Let $G$ be a graph on the vertex set $X_{[h]}=\{x_{1},\ldots, x_{h}\}$ and let $\mathcal{H}=\{H_{i}\,\,: x_{i}\in V(G)\}$. We set $H_{i}=K_{m_{i}}$ on the vertex set $\{y_{i,1},\ldots, y_{i,m_{i}}\}$. Then we have 
\begin{enumerate}
\item $\beta_{i}(J(G\circ\mathcal{H}))=\displaystyle\sum_{C\in{\rm Min}(G\circ\mathcal{H})}
\begin{pmatrix}
|V(G\circ\mathcal{H})\setminus(C\cup\{y_{1,m_{1}},\ldots, y_{h,m_{h}}\})| \\ i
\end{pmatrix}
\mbox{ for all }i,$
\item ${\rm pd}(J(G))=\max\{|V(G\circ\mathcal{H})\setminus(C\cup\{y_{1,m_{1}},\ldots, y_{h,m_{h}}\})|\,\,:C\in{\rm Min}(G\circ\mathcal{H})\}.$
\end{enumerate}
In particular, the graded Betti numbers of $J(G\circ\mathcal{H})$ do not depend upon the characteristic of the underlying field $\Bbbk$. 
\end{cor}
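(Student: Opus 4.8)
The statement to prove is Corollary~\ref{clique-Betti}, which records the graded Betti numbers and projective dimension of $J(G\circ\mathcal{H})$ for a clique corona graph. The entire strategy is to reduce this to the already-established machinery for clique-whiskered graphs, namely Corollary~\ref{Betti} and Theorem~\ref{min free of corona}. The plan is as follows. First I would invoke Lemma~\ref{corona}, which writes $G\circ\mathcal{H}$ in the form $G'^{\pi}$ where $G' = G\circ\mathcal{H}-\{y_{1,m_{1}},\ldots, y_{h,m_{h}}\}$ and $\pi=\{(K_{m_{i}}\cup\{x_{i}\})\setminus\{y_{i,m_{i}}\}\,\,: 1\leq i\leq h\}$. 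This is the crucial identification that brings $G\circ\mathcal{H}$ into the scope of the clique-whiskered formalism; the deleted vertices $y_{i,m_{i}}$ play exactly the role of the whisker vertices $w_{i}$ in Definition~\ref{clique-whisker}.

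Once the identification is in place, I would directly apply Corollary~\ref{Betti} to the graph $G'^{\pi} = G\circ\mathcal{H}$. That corollary gives
$$\beta_{i}(J(G'^{\pi}))=\displaystyle\sum_{C\in{\rm Min}(G'^{\pi})}\begin{pmatrix}|V(G')\setminus C| \\ i\end{pmatrix}\mbox{ for all }i,$$
together with the matching formula for the projective dimension. The remaining task is purely bookkeeping: I must translate $V(G')$ and ${\rm Min}(G'^{\pi})$ back into the language of $G\circ\mathcal{H}$. Since $V(G') = V(G\circ\mathcal{H})\setminus\{y_{1,m_{1}},\ldots, y_{h,m_{h}}\}$ and ${\rm Min}(G'^{\pi}) = {\rm Min}(G\circ\mathcal{H})$, for a minimal vertex cover $C$ of $G\circ\mathcal{H}$ we have $|V(G')\setminus C| = |V(G\circ\mathcal{H})\setminus(C\cup\{y_{1,m_{1}},\ldots, y_{h,m_{h}}\})|$, which is exactly the binomial upper index appearing in the claimed formula. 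Substituting this yields part (1), and the projective dimension formula in part (2) follows by taking the maximum over $C$ of these quantities, precisely as in part (2) of Corollary~\ref{Betti}. The characteristic-independence assertion is inherited verbatim from the corresponding statement in Corollary~\ref{Betti}.

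I do not expect any genuine obstacle here, since the result is a transcription of Corollary~\ref{Betti} through the dictionary supplied by Lemma~\ref{corona}. The one point demanding care is the set-theoretic identity $V(G')\setminus C = V(G\circ\mathcal{H})\setminus(C\cup\{y_{1,m_{1}},\ldots, y_{h,m_{h}}\})$ when $C$ ranges over minimal vertex covers of $G\circ\mathcal{H}$: I should confirm that the deleted vertices $y_{i,m_{i}}$ are never themselves needed as whisker-free vertices of $G'$ that must be excluded from $V(G')$, and that the correspondence ${\rm Min}(G'^{\pi}) = {\rm Min}(G\circ\mathcal{H})$ is the literal equality of vertex-cover sets rather than merely a bijection (it is, since $G'^{\pi}$ and $G\circ\mathcal{H}$ are the same graph). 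Verifying this identity is routine given Lemma~\ref{corona}, so the corollary then follows immediately.
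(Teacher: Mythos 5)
Your proposal is correct and follows essentially the same route as the paper: the paper also obtains this corollary by identifying $G\circ\mathcal{H}=(G\circ\mathcal{H}-\{y_{1,m_{1}},\ldots,y_{h,m_{h}}\})^{\pi}$ via Lemma~\ref{corona} and then specializing the general clique-whiskered formulas (Theorem~\ref{min free of corona}, i.e.\ Corollary~\ref{Betti} in this setting). The translation $|V(G')\setminus C|=|V(G\circ\mathcal{H})\setminus(C\cup\{y_{1,m_{1}},\ldots,y_{h,m_{h}}\})|$ with ${\rm Min}((G')^{\pi})={\rm Min}(G\circ\mathcal{H})$ is exactly the bookkeeping the paper leaves implicit.
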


We obtain the following theorem by Corollary \ref{reg of clique-whisker} and Lemma \ref{corona}. 

\begin{cor}[\cite{hp}, Theorem 3.5]
Let $G$ be a graph on the vertex set $X_{[h]}$ and let $\mathcal{H}=\{H_{i}\,\,: x_{i}\in V(G)\}$. If $G\circ\mathcal{H}$ is the clique corona graph, then
$${\rm reg}(S/I(G\circ\mathcal{H}))={\rm im}(G\circ\mathcal{H}).$$
\end{cor}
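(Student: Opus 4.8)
The plan is to observe that the statement is an immediate specialization of the regularity formula for clique-whiskered graphs, since Lemma \ref{corona} exhibits every clique corona graph as a clique-whiskered graph. Thus the whole proof amounts to rewriting $G\circ\mathcal{H}$ in the form $(G')^{\pi}$ and quoting Corollary \ref{reg of clique-whisker}.

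First I would invoke Lemma \ref{corona}. Setting $G'=G\circ\mathcal{H}-\{y_{1,m_{1}},\ldots,y_{h,m_{h}}\}$ and $\pi=\{(K_{m_{i}}\cup\{x_{i}\})\setminus\{y_{i,m_{i}}\}\,\,:1\leq i\leq h\}$, that lemma gives the identity $G\circ\mathcal{H}=(G')^{\pi}$. Here the removed vertices $y_{1,m_{1}},\ldots,y_{h,m_{h}}$ play exactly the role of the added whisker vertices $w_{1},\ldots,w_{h}$ of Definition \ref{clique-whisker}, and $\pi$ is a genuine clique-partition of $G'$ by construction, each block being complete because $H_{i}=K_{m_{i}}$ is complete and $x_{i}$ is joined to all of $H_{i}$ in the corona.

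Next I would apply Corollary \ref{reg of clique-whisker} to the graph $G'$ equipped with the clique-partition $\pi$. That corollary asserts ${\rm reg}(S/I((G')^{\pi}))={\rm im}((G')^{\pi})$ with no extra hypotheses; its proof rests only on the projective-dimension computation of Corollary \ref{Betti} together with the general bounds of \cite[Lemma 2.2]{k} and \cite[Corollary 0.3]{t}. Substituting the identity $(G')^{\pi}=G\circ\mathcal{H}$ from the first step into both sides then yields ${\rm reg}(S/I(G\circ\mathcal{H}))={\rm im}(G\circ\mathcal{H})$, as desired.

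I do not expect a genuine obstacle: the entire combinatorial content is already encapsulated in Lemma \ref{corona}, so the only point to check is that $\pi$ indeed partitions $V(G')$ into cliques and that each whiskered vertex $y_{i,m_{i}}$ is adjacent to every vertex of its block $(K_{m_{i}}\cup\{x_{i}\})\setminus\{y_{i,m_{i}}\}$. Both facts are immediate from the definition of the clique corona and the completeness of each $H_{i}=K_{m_{i}}$, so the deduction is purely a matter of matching notation.
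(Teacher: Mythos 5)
Your proposal is correct and follows exactly the paper's route: the paper derives this corollary by combining Lemma \ref{corona} (which exhibits $G\circ\mathcal{H}$ as the clique-whiskered graph $(G\circ\mathcal{H}-\{y_{1,m_{1}},\ldots,y_{h,m_{h}}\})^{\pi}$) with Corollary \ref{reg of clique-whisker}. Your additional checks that $\pi$ is a genuine clique partition and that each $y_{i,m_{i}}$ is adjacent to all of its block are the right details to verify, and they match what the paper leaves implicit.
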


\begin{cor}
Let $G$ be a graph on the vertex set $X_{[h]}$ and let $\mathcal{H}=\{H_{i}\,\,: x_{i}\in V(G)\}$, $S=\Bbbk[V(G\circ\mathcal{H})]$. If $G\circ\mathcal{H}$ is the clique corona graph, then the Hilbert series of $H^{h}_{\mathfrak{m}}(S/I(G\circ\mathcal{H}))$ is given by the following formula: 
\begin{align*}
&F(H^h_{\mathfrak{m}}(S/I(G\circ\mathcal{H})),t) \\[0.15cm]
& =\displaystyle\sum_j\dim_\Bbbk[H^h_{\mathfrak{m}}(S/I(G\circ\mathcal{H}))]_{\ell} \ t^{\ell} \\[0.15cm]
&=\displaystyle\sum_{i}\displaystyle\sum_{C\in{\rm Min}(G)}
\begin{pmatrix}
|V(G\circ\mathcal{H})\setminus(C\cup\{y_{1,m_{1}},\ldots, y_{h,m_{h}}\})| \\ i
\end{pmatrix}\left(\frac{t^{-1}}{1-t^{-1}}\right)^{h-i}.
\end{align*}
\end{cor}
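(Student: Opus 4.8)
The plan is to derive the formula as an immediate specialization of Corollary \ref{hilb}, with the Betti numbers supplied by Corollary \ref{clique-Betti}. First I would record the structural facts. Write $\Delta=\Delta(G\circ\mathcal{H})$ for the independence complex, so that $I_{\Delta}=I(G\circ\mathcal{H})$; since the cover ideal is the Alexander dual of the edge ideal, $I_{\Delta^{\vee}}=J(G\circ\mathcal{H})$. By Lemma \ref{corona}, $G\circ\mathcal{H}$ is the clique-whiskered graph $(G')^{\pi}$ of the base graph $G'=G\circ\mathcal{H}-\{y_{1,m_{1}},\ldots,y_{h,m_{h}}\}$ with respect to a clique partition $\pi$ with $h$ parts. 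Hence $S/I(G\circ\mathcal{H})$ is Cohen--Macaulay by \cite[Corollary 3.5]{cn} with $\dim S/I(G\circ\mathcal{H})=h$; in particular $H^{j}_{\mathfrak{m}}(S/I(G\circ\mathcal{H}))$ vanishes for $j\neq h$, so only the case $j=h$ needs to be treated.

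Next I would apply Corollary \ref{hilb} to $\Delta$ with $n=|V(G\circ\mathcal{H})|=h+\sum_{i}m_{i}$ and $j=h$. Using $I_{\Delta^{\vee}}=J(G\circ\mathcal{H})$, this gives
\[
F(H^{h}_{\mathfrak{m}}(S/I(G\circ\mathcal{H})),t)=\sum_{i}\beta_{i,\,n-h+i}(J(G\circ\mathcal{H}))\left(\frac{t^{-1}}{1-t^{-1}}\right)^{h-i}.
\]
The one point requiring care is the internal degree $n-h+i$. The base graph satisfies $|V(G')|=n-h=\sum_{i}m_{i}$, and by Corollary \ref{Betti}(1) applied to $(G')^{\pi}=G\circ\mathcal{H}$ the graded Betti numbers of $J(G\circ\mathcal{H})$ are concentrated in the single internal degree $i+|V(G')|$, where they equal the total Betti number $\beta_{i}(J(G\circ\mathcal{H}))$. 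Since $n-h+i=|V(G')|+i$ is exactly that degree, I obtain $\beta_{i,\,n-h+i}(J(G\circ\mathcal{H}))=\beta_{i}(J(G\circ\mathcal{H}))$.

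Finally I would substitute the explicit value of $\beta_{i}(J(G\circ\mathcal{H}))$ from Corollary \ref{clique-Betti}, namely $\sum_{C\in{\rm Min}(G\circ\mathcal{H})}\binom{|V(G\circ\mathcal{H})\setminus(C\cup\{y_{1,m_{1}},\ldots,y_{h,m_{h}}\})|}{i}$, which yields the asserted formula. There is no genuine analytic difficulty: the argument is purely a combination of two earlier corollaries, and the only step that must be checked carefully is the degree-matching identity $n-h=|V(G')|$, which ensures that the unique nonzero strand of graded Betti numbers of the cover ideal lands precisely in the degrees selected by Corollary \ref{hilb} at cohomological index $j=h$.
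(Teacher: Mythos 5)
Your argument is correct and is exactly the route the paper intends: apply Corollary \ref{hilb} with $j=h$ to $\Delta(G\circ\mathcal{H})$, observe via Corollary \ref{Betti} (for the base graph $G'=G\circ\mathcal{H}-\{y_{1,m_{1}},\ldots,y_{h,m_{h}}\}$, so that $n-h=|V(G')|$) that the graded Betti numbers of $J(G\circ\mathcal{H})$ live in the single strand picked out there, and substitute the values from Corollary \ref{clique-Betti}. The only discrepancy is that your sum runs over ${\rm Min}(G\circ\mathcal{H})$, consistent with Corollary \ref{clique-Betti}, while the statement writes ${\rm Min}(G)$; this appears to be a typo in the paper rather than a gap in your proof.
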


To give corollaries for Cameron--Walker graphs, let us first recall the definition of Cameron--Walker graphs. According to \cite[Theorem 1]{cw} and \cite[Remark 0.1]{hhko}, for a graph $G$, the equality ${\rm im}(G) = {\rm m}(G)$ holds if and only if $G$ is one of the following graphs: 
\begin{enumerate}
\item a star graph, or 
\item a star triangle, or   
\item a connected finite graph consisting of a connected bipartite graph with vertex partition $\{x_{1}, \ldots, x_{n}\} \cup \{y_{1}, \ldots, y_{m}\}$ such that there is at least one leaf edge attached to each vertex $x_{i}$ and that there may be possibly some pendant triangles attached to each vertex $y_{j}$.
\end{enumerate} 

\begin{defi}
A finite connected simple graph $G$ is said to be a {\em Cameron--Walker graph}, 
if ${\rm im}(G) = {\rm m}(G)$ and if $G$ is neither a star graph nor a star triangle. 
\end{defi}

It is known that the following characterization of Cohen--Macaulay Cameron--Walker graphs by Kimura, Hibi, Higashitani and O' Keefe. 

\begin{thm}[\cite{hhko}, Theorem 1.3]
For a Cameron--Walker graph $G$, 
the following five conditions are equivalent: 
\begin{enumerate}
\item $G$ is unmixed. 
\item $G$ is Cohen--Macaulay. 
\item $G$ is unmixed and shellable. 
\item $G$ is unmixed and vertex decomposable. 
\item $G$ consists of a connected bipartite graph with vertex partition 
$[n] \sqcup [m]$ such that 
there is exactly one leaf edge attached to each vertex
$i \in [n]$ and that there is exactly one pendant triangle
attached to each vertex $j \in [m]$.
\end{enumerate}
\end{thm}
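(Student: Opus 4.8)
The plan is to prove the cycle of implications $(4)\Rightarrow(3)\Rightarrow(2)\Rightarrow(1)\Rightarrow(5)\Rightarrow(4)$. The three implications $(4)\Rightarrow(3)\Rightarrow(2)\Rightarrow(1)$ are formal and hold for the independence complex $\Delta(G)$ of any graph: vertex decomposability implies shellability, a pure shellable complex is Cohen--Macaulay, and a Cohen--Macaulay Stanley--Reisner ring forces $\Delta(G)$ to be pure, i.e. $G$ unmixed. Since conditions $(3)$ and $(4)$ already carry the word ``unmixed'' (equivalently, $\Delta(G)$ pure), no extra work is needed here: $(4)$ gives unmixed $+$ shellable $=(3)$, pure shellable gives $(2)$, and $(2)$ gives $(1)$. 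Thus the entire combinatorial content of the theorem lies in tying the homological conditions to the explicit shape $(5)$, which I would establish through $(1)\Rightarrow(5)$ and $(5)\Rightarrow(4)$.

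For $(1)\Rightarrow(5)$, recall that $G$ is unmixed iff all maximal independent sets have the same cardinality, and I would exploit this through a sequence of local ``swap'' arguments. First, if some $x_i$ carried two leaves $\ell,\ell'$, pick a maximal independent set $A\ni x_i$; then $A'=(A\setminus\{x_i\})\cup\{\ell,\ell'\}$ is independent with $|A'|=|A|+1$, and extending $A'$ to a maximal independent set contradicts unmixedness. Hence each $x_i$ has exactly one leaf (at least one being guaranteed by the definition of a Cameron--Walker graph). The identical argument applied to two pendant triangles $\{y_j,p_1,q_1\}$ and $\{y_j,p_2,q_2\}$ at a single $y_j$ (swap $y_j$ out, $p_1,p_2$ in) shows each $y_j$ carries at most one pendant triangle. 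Finally, to rule out a triangle-free $y_{j_0}$, encode a maximal independent set $A$ by its ``core'' data $(I,J)$ with $I=\{i:x_i\in A\}$ and $J=\{j:y_j\in A\}$; using that each $x_i$ now has exactly one leaf, one gets $|A|=|I|+|J|+\sum_{i\notin I}1+\sum_{j\notin J}b_j$, where $b_j\in\{0,1\}$ records the pendant triangle at $y_j$. Comparing the maximal independent set with $I=\emptyset$ (all $y_j$ in), of size $m+n$, with one that includes a single $x_{i^\ast}$ adjacent to $y_{j_0}$ and excludes its $y$-neighbors $N(x_{i^\ast})$ yields $\sum_{j\in N(x_{i^\ast})}b_j=|N(x_{i^\ast})|$, so every such $b_j$, including $b_{j_0}$, equals $1$; this contradicts $b_{j_0}=0$ and forces exactly one pendant triangle per $y_j$, which is precisely $(5)$.

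For $(5)\Rightarrow(4)$, unmixedness is immediate from the same size formula: with every $a_i=b_j=1$ one computes $|A|=|I|+|J|+(n-|I|)+(m-|J|)=n+m$ for every admissible choice of core data, so all maximal independent sets have size $n+m$. Vertex decomposability I would prove by induction on the number of vertices, taking as shedding vertex a bipartite vertex $x_i$: because its private leaf $\ell_i$ becomes isolated in $G\setminus x_i$, it lies in every maximal independent set of $G\setminus x_i$ while being absent from $G\setminus N[x_i]$, which is exactly the shedding condition. It then remains to see that $G\setminus x_i$ (in which $\ell_i$ is isolated) and $G\setminus N[x_i]$ (in which each pendant triangle at a deleted neighbor $y_j$ collapses to a disjoint edge) are again vertex decomposable.

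The hard part will be this last induction: although the shedding condition comes for free once one notices the isolated leaf, the two smaller graphs $G\setminus x_i$ and $G\setminus N[x_i]$ are no longer Cameron--Walker graphs of type $(5)$ on the nose---they acquire isolated vertices and pendant edges and may disconnect. The clean way around this is to strengthen the inductive hypothesis, proving vertex decomposability for the broader class of graphs that are disjoint unions of isolated vertices, single edges, and type-$(5)$ Cameron--Walker pieces (a class one checks is closed under both $v\mapsto G\setminus v$ and $v\mapsto G\setminus N[v]$ at a leaf-bearing bipartite vertex); isolated vertices contribute cone points and disjoint edges contribute joins, both harmless for vertex decomposability. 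Once the inductive class is chosen correctly, each reduction stays inside it and the recursion terminates at edgeless graphs, whose independence complexes are simplices and hence vertex decomposable.
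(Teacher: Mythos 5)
This theorem is imported by the paper from Hibi--Higashitani--Kimura--O'Keefe \cite{hhko} and is stated without proof, so there is no in-paper argument to compare against; your proposal is a self-contained proof whose architecture --- the formal chain $(4)\Rightarrow(3)\Rightarrow(2)\Rightarrow(1)$, swap and counting arguments for $(1)\Rightarrow(5)$, and an induction with a leaf-bearing bipartite vertex as shedding vertex for $(5)\Rightarrow(4)$ --- is sound. The counting step is correct: once each $x_i$ carries exactly one leaf and each $y_j$ at most one pendant triangle, a maximal independent set contains exactly one vertex from each pair $\{x_i,\ell_i\}$ and, when $b_j=1$, exactly one vertex from the triangle $\{y_j,p_j,q_j\}$; this yields both the contradiction forcing $b_{j_0}=1$ (note every $y_{j_0}$ does have a bipartite neighbour $x_{i^\ast}$, since $n,m\ge 1$ and the bipartite part is connected) and, once all $b_j=1$, the uniform size $n+m$ giving unmixedness in $(5)\Rightarrow(4)$.

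The one genuine defect is the closure claim for your strengthened inductive class. The class ``isolated vertices, single edges, and type-$(5)$ pieces'' is \emph{not} closed under $G\mapsto G\setminus x_i$: if some $y_j$ has $x_i$ as its only neighbour in the bipartite part (for instance when that part is a star centred at $x_i$), then $G\setminus x_i$ acquires a component equal to the triangle on $\{y_j,p_j,q_j\}$, which is a star triangle and hence neither an isolated vertex, nor a single edge, nor a type-$(5)$ Cameron--Walker piece. The fix is one line --- enlarge the class to allow triangle components, whose independence complexes are three isolated points and hence vertex decomposable --- but as written the induction does not close on itself. Relatedly, the recursion terminates at disjoint unions of points, edges and triangles rather than at edgeless graphs; the terminal independence complexes are joins of zero-dimensional complexes, which are vertex decomposable but are not simplices as you assert.
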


Using this characterization, we introduce some notation and state a lemma.

 \begin{nota}\label{CW-nota}
Let $G$ be a Cohen--Macaulay Cameron--Walker graph. Let denote $G_{\rm bip}$ the bipartite part of $G$, namely,  $G_{\rm bip}$ is the induced subgraph of $G$ on $\{x_{1},\ldots, x_{n}\}\cup\{y_{1},\ldots, y_{m}\}$. Also, let $w_{i}$ be the free vertex attached to $x_{i}$ for all $i$ and $\{z_{j}, w_{n+j}\}$ and $z_{j}, w_{n+j}$ be the pendant attached to $y_{j}$ for all $j$. 
\end{nota}

\begin{lemma}\label{CW}
Let $G$ be a Cohen--Macaulay Cameron--Walker graph. Assume the condition in Notation \ref{CW-nota}. Then, we have $$G=(G|_{V(G_{\rm bip})\cup\{z_{1},\ldots, z_{m}\}})^{\pi},$$where $\pi=\{x_{i}\,\,: 1\leq i\leq n\}\cup\{\{y_{j}, z_{j}\}\,\,: 1\leq j\leq m\}.$
\end{lemma}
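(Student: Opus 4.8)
The plan is to verify directly that the clique-whiskered graph construction $(G|_{V(G_{\rm bip})\cup\{z_1,\ldots,z_m\}})^{\pi}$ produces exactly $G$, by comparing vertex sets and edge sets under the stated clique partition $\pi$. First I would set $H=G|_{V(G_{\rm bip})\cup\{z_1,\ldots,z_m\}}$, the induced subgraph obtained by deleting the free vertices $w_1,\ldots,w_{n+m}$. By Notation \ref{CW-nota}, each $w_i$ (for $1\le i\le n$) is a leaf attached only to $x_i$, and each $w_{n+j}$ is the triangle-vertex attached only to $y_j$ and $z_j$; deleting all of them leaves the bipartite part $G_{\rm bip}$ together with the vertices $z_1,\ldots,z_m$, where each $z_j$ retains its edge $\{y_j,z_j\}$. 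So $V(H)=\{x_1,\ldots,x_n\}\cup\{y_1,\ldots,y_m\}\cup\{z_1,\ldots,z_m\}$, and one checks that $\pi$ as defined is genuinely a clique partition of $H$: the singletons $\{x_i\}$ are trivially cliques, each pair $\{y_j,z_j\}$ is a clique since $\{y_j,z_j\}\in E(H)$, and these sets are pairwise disjoint with union equal to $V(H)$.

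Next I would compute $H^{\pi}$ from Definition \ref{clique-whisker} and match it with $G$ edge by edge. The whiskering adds one new vertex per block of $\pi$: a whisker $w_i$ attached to the singleton block $\{x_i\}$ for each $1\le i\le n$, and a whisker attached to the block $\{y_j,z_j\}$, joined to both $y_j$ and $z_j$, for each $1\le j\le m$. Identifying these new whisker vertices with $w_1,\ldots,w_n$ and $w_{n+1},\ldots,w_{n+m}$ respectively, the edge set of $H^{\pi}$ is $E(H)$ together with the edges $\{x_i,w_i\}$ for $1\le i\le n$ and the edges $\{y_j,w_{n+j}\},\{z_j,w_{n+j}\}$ for $1\le j\le m$. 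These last two edges, together with $\{y_j,z_j\}$ already in $E(H)$, form precisely the pendant triangle on $y_j,z_j,w_{n+j}$, while the edges $\{x_i,w_i\}$ are exactly the leaf edges. Thus $E(H^{\pi})=E(G)$ and the vertex sets agree, giving $G=H^{\pi}$ as claimed.

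The only genuinely delicate point, and the step I expect to be the main obstacle, is bookkeeping the whisker vertices correctly so that the new vertex attached to the block $\{y_j,z_j\}$ is identified with $w_{n+j}$ and produces a triangle rather than a path. The clique-whiskering operation attaches a single new vertex to \emph{all} elements of a block, so on the two-element block $\{y_j,z_j\}$ it creates edges to both $y_j$ and $z_j$; combined with the edge $\{y_j,z_j\}$ surviving in $H$, this yields the pendant triangle required by condition (5) of the characterization. I would make this explicit to confirm that no spurious edges are introduced and that the degree conditions (one leaf per $x_i$, one pendant triangle per $y_j$) are exactly reproduced. Once the edge-set equality is established, the lemma follows immediately.
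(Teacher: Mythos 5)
Your proposal is correct and is exactly the direct verification the paper has in mind: the paper states Lemma \ref{CW} without proof, treating it as an immediate consequence of condition (5) of the characterization of Cohen--Macaulay Cameron--Walker graphs, and your edge-by-edge check (including the observation that whiskering the two-element block $\{y_j,z_j\}$ recreates the pendant triangle) is precisely that omitted argument. No gaps.
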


Thanks to Theorem \ref{free resol} and Lemma \ref{CW}, we give an explicit minimal free resolution of 
the cover ideals of Cohen--Macaulay Cameron--Walker graphs. 

\begin{thm}\label{min free of CW}
Let $G$ be a Cohen--Macaulay Cameron--Walker graph. Then the complex $(F, d_{\bullet})$ given in Construction \ref{const1} is a minimal free resolution of $J(G)$ under the assumption on Lemma \ref{CW}. 
\end{thm}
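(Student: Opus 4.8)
The plan is to reduce the statement directly to Theorem \ref{free resol} by means of the structural description of Cohen--Macaulay Cameron--Walker graphs furnished by Lemma \ref{CW}. Once $G$ is recognized as a clique-whiskered graph $H^{\pi}$, Theorem \ref{free resol} immediately yields that the complex of Construction \ref{const1} is a minimal free resolution of $J(H^{\pi})=J(G)$, and no further homological work is required; the entire task is to match the Cameron--Walker data to the hypotheses of Setup \ref{split}.

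First I would invoke Lemma \ref{CW} to write $G=H^{\pi}$ with $H=G|_{V(G_{\rm bip})\cup\{z_{1},\ldots, z_{m}\}}$ and $\pi=\{x_{i}\,\,:1\leq i\leq n\}\cup\{\{y_{j}, z_{j}\}\,\,:1\leq j\leq m\}$. To present this in the form demanded by Setup \ref{split} and Construction \ref{const1}, I would relabel the vertices of $H$ by the $r=n+m$ blocks of $\pi$: set $x_{i,1}:=x_{i}$ for the singleton cliques $W_{i}=\{x_i\}$ with $1\leq i\leq n$, and $x_{n+j,1}:=y_{j}$, $x_{n+j,2}:=z_{j}$ for the two-element cliques $W_{n+j}=\{y_j,z_j\}$ with $1\leq j\leq m$. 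I would then verify that $\pi$ is genuinely a clique partition of $H$: the blocks are pairwise disjoint and cover $V(H)$ by construction, each singleton is trivially a clique, and each $\{y_{j},z_{j}\}$ is an edge of $H$ (since $z_{j}$ lies on the pendant triangle at $y_{j}$) and hence a two-clique. Under the whiskering of Definition \ref{clique-whisker}, the new vertex attached to $W_{i}$ recovers exactly the leaf $w_{i}$ at $x_{i}$ for $1\leq i\leq n$, and the one attached to $W_{n+j}$ recovers the apex $w_{n+j}$ of the pendant triangle at $y_{j}$; thus $H^{\pi}=G$.

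With this identification in hand, Theorem \ref{free resol} applies verbatim to $H^{\pi}$ and shows that the complex $(\mathbb{F}, d_{\bullet})$ of Construction \ref{const1}, built with respect to the admissible ordering $x_{1}<\cdots<x_{n}<y_{1}<z_{1}<\cdots<y_{m}<z_{m}$ in the relabeled variables, is a minimal free resolution of $J(H^{\pi})=J(G)$. The hard part is only the bookkeeping: one must confirm that the index conventions of Setup \ref{split}, the set $\mathcal{D}(G^{\pi};C)=V(H)\setminus C$, and the multidegrees ${\bf z}_{C}{\bf x}_{\sigma}$ transport faithfully to the Cameron--Walker labeling. I expect no genuine difficulty beyond this routine check, since the substantive content already resides in Theorem \ref{free resol}; the argument is entirely parallel to the proofs of Theorems \ref{min free of chordal} and \ref{min free of corona}.
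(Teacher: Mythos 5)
Your proposal is correct and follows exactly the route the paper intends: the paper states Theorem \ref{min free of CW} as an immediate consequence of Lemma \ref{CW} (which exhibits $G$ as the clique-whiskered graph $(G|_{V(G_{\rm bip})\cup\{z_{1},\ldots, z_{m}\}})^{\pi}$) together with Theorem \ref{free resol}, and offers no further argument. Your additional verification that $\pi$ is a genuine clique partition and that the whiskering recovers the leaves and pendant triangles is exactly the bookkeeping the paper leaves implicit.
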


We give corollaries for Cohen--Macaulay Cameron--Walker graphs by using Theorem \ref{min free of CW}. 

\begin{cor}
Let $G$ be a Cohen--Macaulay Cameron--Walker graph. Assume the condition in Notation \ref{CW-nota}. Then, we have 
\begin{enumerate}
\item $\beta_{i}(J(G))=\displaystyle\sum_{C\in{\rm Min}(G)}\begin{pmatrix}|V(G)|_{V(G_{\rm bip})\cup\{z_{1},\ldots, z_{m}\}}\setminus C| \\ i\end{pmatrix}\mbox{ for all }i,$
\item ${\rm pd}(J(G))=\max\{|V(G|_{V(G_{\rm bip})\cup\{z_{1},\ldots, z_{m}\}})\setminus C|\,\,:C\in{\rm Min}(G)\}.$
\end{enumerate}
In particular, the graded Betti numbers of $J(G)$ do not depend upon the characteristic of the underlying field $\Bbbk$. 
\end{cor}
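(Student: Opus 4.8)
The plan is to read off both formulas directly from Corollary~\ref{Betti}, once $G$ has been presented as a clique-whiskered graph via Lemma~\ref{CW}. First I would set $H:=G|_{V(G_{\rm bip})\cup\{z_{1},\ldots, z_{m}\}}$ and take the clique partition $\pi=\{\{x_{i}\}\,\,:1\leq i\leq n\}\cup\{\{y_{j}, z_{j}\}\,\,:1\leq j\leq m\}$ of $H$. Lemma~\ref{CW} gives $G=H^{\pi}$, so that in the language of Setup~\ref{split} the base graph is $H$ and the clique-whiskered graph is $G$ itself; in particular the ambient vertex set ``$V(G)$'' appearing in Corollary~\ref{Betti} is here $V(H)=V(G_{\rm bip})\cup\{z_{1},\ldots, z_{m}\}$, while the fresh vertices $w_{1},\ldots, w_{n+m}$ play the role of $w_{1},\ldots, w_{r}$ in Definition~\ref{clique-whisker}.

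With this identification in place, I would apply Corollary~\ref{Betti}(1) to $H^{\pi}=G$, obtaining
$$\beta_{i}(J(G))=\beta_{i}(J(H^{\pi}))=\sum_{C\in{\rm Min}(H^{\pi})}\begin{pmatrix}|V(H)\setminus C| \\ i\end{pmatrix}.$$
Since $H^{\pi}=G$ we have ${\rm Min}(H^{\pi})={\rm Min}(G)$, and substituting $V(H)=V(G_{\rm bip})\cup\{z_{1},\ldots, z_{m}\}$ yields formula (1). Formula (2) follows in the same way from Corollary~\ref{Betti}(2), with the binomial coefficient replaced by the maximum of $|V(H)\setminus C|$ over $C\in{\rm Min}(G)$. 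The final assertion on characteristic-independence is then immediate from the corresponding clause of Corollary~\ref{Betti}, since the resolution of Construction~\ref{const1} is defined over any field $\Bbbk$.

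The only delicate point---and hence the main (mild) obstacle---is the bookkeeping of which graph plays the base role and which plays the clique-whiskered role in Setup~\ref{split}: the leaves $w_{i}$ attached to the $x_{i}$ and the triangle-completing vertices $w_{n+j}$ attached to the edges $\{y_{j}, z_{j}\}$ are exactly the fresh vertices of the whiskering, so they must be excluded from $V(H)$, whereas the $z_{j}$ remain in $V(H)$ as part of the edge-cliques $\{y_{j}, z_{j}\}$. Once this matching is fixed, both formulas are a direct specialization of Corollary~\ref{Betti} and no further computation is required.
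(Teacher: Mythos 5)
Your proposal is correct and follows the same route as the paper: the corollary is obtained by specializing Corollary \ref{Betti} (equivalently, Theorem \ref{free resol}) to the presentation $G=(G|_{V(G_{\rm bip})\cup\{z_{1},\ldots, z_{m}\}})^{\pi}$ furnished by Lemma \ref{CW}, exactly as you describe. Your explicit bookkeeping of which vertices serve as the whisker vertices versus the base graph is a faithful (and slightly more careful) account of what the paper leaves implicit.
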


\begin{cor}
Let $G$ be a Cohen--Macaulay Cameron--Walker graph and let $S=\Bbbk[V(G)]$. Then the Hilbert series of $H^{n+m}_{\mathfrak{m}}(S/I(G))$ is given by the following formula under the assumptions of Notation \ref{CW-nota}: 
\begin{align*}
F(H^{n+m}_{\mathfrak{m}}(S/I(G)),t)&=\displaystyle\sum_j\dim_\Bbbk[H^{n+m}_{\mathfrak{m}}(S/I(G))]_j \ t^j \\[0.15cm]
& =\displaystyle\sum_{i}\displaystyle\sum_{C\in{\rm Min}(G)}
\begin{pmatrix}
|V(G)|_{V(G_{\rm bip})\cup\{z_{1},\ldots, z_{m}\}}\setminus C| \\ i
\end{pmatrix}\left(\frac{t^{-1}}{1-t^{-1}}\right)^{n+m-i}.
\end{align*}
\end{cor}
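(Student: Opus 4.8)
The plan is to read off the Hilbert series from Corollary \ref{hilb} and then feed in the graded Betti numbers of the cover ideal $J(G)$, which are already pinned down by the resolution in Construction \ref{const1}. First I would record the bookkeeping coming from Lemma \ref{CW}: the Cameron--Walker graph $G$ is the clique-whiskered graph $H^{\pi}$ of the base graph $H = G|_{V(G_{\rm bip})\cup\{z_{1},\ldots,z_{m}\}}$, whose clique partition $\pi=\{x_{i}\}_{i}\cup\{\{y_{j},z_{j}\}\}_{j}$ has exactly $r=n+m$ parts. Consequently $|V(H)|=n+2m$, while $|V(G)|=|V(H)|+r=2n+3m$. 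Since $G$ is Cohen--Macaulay with $\dim S/I(G)=r=n+m$, all local cohomology modules $H^{j}_{\mathfrak{m}}(S/I(G))$ vanish except for $j=n+m$, which is why only the top module appears in the statement.

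Next I would apply Corollary \ref{hilb} to the independence complex $\Delta=\Delta(G)$, for which $I_{\Delta}=I(G)$ and $I_{\Delta^{\vee}}=J(G)$, taking $j=n+m$ and using that the ground set has $|V(G)|$ vertices. The third displayed formula there gives
\[
F(H^{n+m}_{\mathfrak{m}}(S/I(G)),t)=\sum_{i}\beta_{i,\,|V(G)|-(n+m)+i}(J(G))\left(\frac{t^{-1}}{1-t^{-1}}\right)^{n+m-i}.
\]
The remaining task is to evaluate these graded Betti numbers. Applying Corollary \ref{Betti} to $G=H^{\pi}$ shows that $J(G)$ has a linear resolution: one has $\beta_{i,j}(J(G))=0$ unless $j=i+|V(H)|$, and in that degree $\beta_{i,i+|V(H)|}(J(G))=\beta_{i}(J(G))=\sum_{C\in{\rm Min}(G)}\binom{|V(H)\setminus C|}{i}$.

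The one point that must be checked --- and it is the only real content beyond citing the earlier results --- is that the internal degree produced by Corollary \ref{hilb} is exactly the degree in which the Betti numbers of $J(G)$ live. This is the arithmetic identity $|V(G)|-(n+m)=(2n+3m)-(n+m)=n+2m=|V(H)|$, so that $|V(G)|-(n+m)+i=|V(H)|+i$. With this match, substituting the Betti formula into the displayed Hilbert series and writing $|V(H)|=|V(G|_{V(G_{\rm bip})\cup\{z_{1},\ldots,z_{m}\}})|$ yields precisely the claimed expression. I expect no genuine obstacle here: once Lemma \ref{CW} identifies $G$ as a clique-whiskered graph, the result is a direct specialization of the clique-whiskered Hilbert-series corollary, the only care needed being the vertex-count bookkeeping that aligns the two degree indices.
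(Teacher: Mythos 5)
Your proposal is correct and follows essentially the same route as the paper: the corollary is intended as a direct specialization of the clique-whiskered Hilbert-series formula, obtained by combining Corollary \ref{hilb} with the Betti numbers from Corollary \ref{Betti} once Lemma \ref{CW} identifies $G$ as $H^{\pi}$ with $r=n+m$ cliques and $|V(H)|=n+2m$. Your degree check $|V(G)|-(n+m)=|V(H)|$ is exactly the bookkeeping that makes the substitution legitimate.
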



\section{multi-clique-whiskered graphs as a generalization of clique-whiskered graphs}\label{multi}
In this section, we define multi-clique-whiskered graphs as a generalization of clique-whiskered graphs. Also, this graph is a generalization of multi-whisker graphs which were first introduced in \cite[Section 1]{mpt} in the context of combinatorial commutative algebra. We show that multi-clique-whiskered graphs are sequentially Cohen--Macaulay as a generalization of a results of \cite[Theorem 3.3]{cn} and \cite[Theorem 6.1]{mpt}. This result leads to a characterization of the projective dimension of multi-clique-whiskered graphs. Also, we construct minimal free resolutions of the cover ideals of multi-clique-whiskered graphs and very well-covered graphs. As a corollary, we obtain the regularity of the edge ideals of multi-clique-whiskered graphs and the Hilbert series of the local cohomology modules of  residue rings modulo edge ideals associated with these graphs. Let us define the multi-clique-whiskered graphs as a generalization of clique-whiskered graphs. 

\begin{defi}
Let $G$ be a graph and let $\pi=\{W_{1},\ldots, W_{r}\}$ be a clique partition. Also we let  $n_{1},\ldots, n_{r}$ be positive integers. We write $W_{i}=\{x_{i,1},\ldots, x_{i,{|W_{i}|}}\}$. Then the graph $G^{\pi}[n_{1},\ldots, n_{r}]$ on the vertex set $V(G)\cup\{w_{1,1},\ldots, w_{1,n_{1}},\ldots, w_{r,1},\ldots, w_{r,n_{r}}\}$ with the edge set$$E(G)\cup\left(\bigcup_{1\leq i\leq r}\{\{x_{i,j}, w_{i,k}\}\,\,: x_{i,j}\in W_{i}\}\right)$$
is called the {\it multi-clique-whiskered graph} of $G$ with respect to $\pi$. 
\end{defi}

Firstly, we prove that multi-clique-whiskered graphs are vertex decomposable, and hence sequentially Cohen--Macaulay, as a generalization of results on clique-whiskered graphs \cite[Theorem 3.3]{cn} and multi-whisker graphs \cite[Theorem 6.1]{mpt}. Before proceeding, let us recall definitions of them, which are defined in \cite{bw} and \cite{s1}.
For a simplicial complex $\Delta$ on the vertex set $V=\{x_{1},\ldots, x_{n}\}$, $\Delta$ is called {\it vertex decomposable}, if either: 
\begin{enumerate}
\item $\Delta=\langle\{x_{1},\ldots, x_{n}\}\rangle$, or $\Delta=\emptyset$. 
\item There exists a vertex $x\in V$ such that ${\rm link}_{\Delta}(\{x\})$ and ${\rm del}_{\Delta}(\{x\})$ are vertex decomposable, and every facet of ${\rm del}_{\Delta}(\{x\})$ is a facet of $\Delta$,
\end{enumerate}
where $${\rm link}_{\Delta}(\{x\})=\{F\in\Delta\,\,: \{x\}\cap F=\emptyset\mbox{ and }\{x\}\cup F\in\Delta\}$$and$${\rm del}_{\Delta}(\{x\})=\{F\in\Delta\,\,: \{x\}\cap F=\emptyset\}.$$Moreover, for a graded module $M$ over $S=\Bbbk[x_{1},\ldots, x_{n}]$, $M$ is called {\it sequentially Cohen--Macaulay}, if there exists a filtration$$0=M_{0}\subset M_{1}\subset\cdots\subset M_{t}=M$$of $M$ by graded $S$-modules such that $\dim M_{i}/M_{i-1}<\dim M_{i+1}/M_{i}$, and $M_{i}/M_{i-1}$ is Cohen--Macaulay for all $i$. For a graph $G$, we say that $G$ is {\it vertex decomposable} , if the independence complex $\Delta(G)$ is vertex decomposable. Also, $G$ is called {\it sequentially Cohen--Macaulay} if $S/I(G)$ is sequentially Cohen--Macaulay. 

\begin{thm}\label{seq CM}
Let $G$ be a graph, $\pi=\{W_{1},\ldots, W_{r}\}$ be a clique partition and let  $n_{1},\ldots, n_{r}$ be positive integers. We write $W_{i}=\{x_{i,1},\ldots, x_{i,{|W_{i}|}}\}$. Then the multi-clique-whiskered graph $G^{\pi}[n_{1},\ldots, n_{r}]$ is vertex decomposable, and hence sequentially Cohen--Macaulay. 
\end{thm}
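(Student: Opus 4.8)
The plan is to prove vertex decomposability by induction on $|V(G)|$, the number of vertices of the base graph $G$, and then to invoke the standard implication that vertex decomposable complexes are sequentially Cohen--Macaulay (vertex decomposable $\Rightarrow$ shellable $\Rightarrow$ sequentially Cohen--Macaulay). Write $\Gamma = G^{\pi}[n_{1},\ldots,n_{r}]$ and $\Delta=\Delta(\Gamma)$ for its independence complex. It is convenient to prove the slightly stronger statement that $\Delta(\Gamma')$ is vertex decomposable whenever $\Gamma'$ is a multi-clique-whiskered graph together with finitely many isolated vertices; since adjoining an isolated vertex corresponds to coning the independence complex, and coning preserves vertex decomposability, these isolated vertices cause no trouble and I would suppress them in the main discussion. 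The base case $|V(G)|=1$ is the star $K_{1,n_{1}}$, which is vertex decomposable.

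For the inductive step I would fix a block $W_{1}$ of $\pi$ and take $x:=x_{1,1}\in W_{1}$ as the candidate shedding vertex. By definition of vertex decomposability I must check three things: that ${\rm del}_{\Delta}(\{x\})$ and ${\rm link}_{\Delta}(\{x\})$ are vertex decomposable, and that every facet of ${\rm del}_{\Delta}(\{x\})$ is a facet of $\Delta$. Here ${\rm del}_{\Delta}(\{x\})=\Delta(\Gamma - x)$ and ${\rm link}_{\Delta}(\{x\})=\Delta(\Gamma - N_{\Gamma}[x])$. Deleting $x$ removes it from $W_{1}$, leaving the clique $W_{1}\setminus\{x\}$ with the same whiskers $w_{1,1},\ldots,w_{1,n_{1}}$ attached, so $\Gamma - x$ is again a multi-clique-whiskered graph over $G-x$ (the whiskers $w_{1,k}$ becoming isolated in the degenerate case $|W_{1}|=1$). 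Since $N_{\Gamma}[x]=W_{1}\cup\{w_{1,1},\ldots,w_{1,n_{1}}\}\cup N_{G}(x)$, removing it deletes all of $W_{1}$ and its whiskers together with $N_{G}(x)$, while each surviving block $W_{j}\setminus N_{G}[x]$ with $j\ge 2$ retains its whiskers; hence $\Gamma - N_{\Gamma}[x]$ is a multi-clique-whiskered graph over $G|_{V(G)\setminus N_{G}[x]}$ plus some isolated whiskers. Both base graphs have strictly fewer vertices than $G$, so the first two conditions follow from the induction hypothesis.

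The crux is the shedding condition. Equivalently, I must show that no maximal independent set $S$ of $\Gamma - x$ is disjoint from $N_{\Gamma}(x)$, for such an $S$ would admit $x$ and thus fail to be a facet of $\Delta$. Suppose $S$ were such a set. Because $W_{1}\setminus\{x\}\subseteq N_{\Gamma}(x)$, the set $S$ avoids $W_{1}\setminus\{x\}$. But in $\Gamma - x$ the whisker $w_{1,1}$ has all of its neighbors inside $W_{1}\setminus\{x\}$, so $S\cup\{w_{1,1}\}$ is independent in $\Gamma - x$, contradicting the maximality of $S$ (note $w_{1,1}\in N_{\Gamma}(x)$, hence $w_{1,1}\notin S$). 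This argument uses only that $n_{1}\ge 1$ and that every whisker of $W_{1}$ is adjacent precisely to $W_{1}$, and it works uniformly whether $|W_{1}|=1$ or $|W_{1}|\ge 2$; it is exactly the whisker-type shedding argument, with the clique playing the role of the attachment vertex.

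Combining the three conditions shows that $x$ is a shedding vertex and that $\Delta$ is vertex decomposable, which specializes to \cite[Theorem 3.3]{cn} when all $n_{i}=1$ and to \cite[Theorem 6.1]{mpt} when all $|W_{i}|=1$. The main obstacle I anticipate is not the shedding verification itself, which is short precisely because the clique structure forces each whisker's neighborhood into $N_{\Gamma}[x]$, but rather the careful bookkeeping that ${\rm del}_{\Delta}(\{x\})$ and ${\rm link}_{\Delta}(\{x\})$ genuinely remain within the inductive class once the isolated whisker vertices (from fully absorbed blocks) appear; handling these through the coning remark is what keeps the induction clean.
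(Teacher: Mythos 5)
Your proof is correct, but it takes a genuinely different route from the paper's. You shed a clique vertex $x_{1,1}\in W_{1}$ and induct on $|V(G)|$, so that both $\mathrm{del}_{\Delta}(x)$ and $\mathrm{link}_{\Delta}(x)$ land back in the (slightly enlarged) inductive class of multi-clique-whiskered graphs plus isolated vertices; your shedding verification via $S\cup\{w_{1,1}\}$ is sound, since in $\Gamma-x$ the whisker $w_{1,1}$ has all its neighbours in $W_{1}\setminus\{x\}\subseteq N_{\Gamma}(x)$. The paper instead sheds a \emph{whisker} vertex $w_{r,n_{r}}$ with $n_{r}>1$, inducting on $|V(G^{\pi}[n_{1},\ldots,n_{r}])|$: deletion simply decrements $n_{r}$, the link is the join of a smaller multi-clique-whiskered complex with the simplex on $\{w_{r,1},\ldots,w_{r,n_{r}-1}\}$, and the argument terminates at the case $n_{i}=1$ for all $i$, which is settled by citing Cook--Nagel's theorem that clique-whiskered graphs are vertex decomposable. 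The trade-off is that the paper's induction is shorter but rests on that external citation, whereas yours is self-contained and in particular reproves \cite[Theorem 3.3]{cn} as the special case $n_{i}\equiv 1$; the price you pay is the extra bookkeeping with isolated whiskers from fully absorbed blocks, which you correctly dispose of by the coning (join-with-a-simplex) remark that the paper also uses. Both arguments need the same convention that the empty complex counts as vertex decomposable in the degenerate case $r=1$.
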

\begin{proof}
We procced by induction on $|V(G^{\pi}[n_{1},\ldots, n_{r}])|$. If $|V(G^{\pi}[n_{1},\ldots, n_{r}])|=2$, then $G^{\pi}[n_{1},\ldots, n_{r}]$ is just the complete graph $K_{2}$ and hence it is vertex decomposable. Assume that $|V(G^{\pi}[n_{1},\ldots, n_{r}])|>2$. If $n_{i}=1$ for all $i$, then it is clique-whiskered graph then we are done by \cite[Theorem 1.1]{hhko}. Hence we may assume that there exist an integer $i$ such that $n_{i}>1$. Without loss of generality, we may assume that $i=r$. Then ${\rm link}_{\Delta(G^{\pi}[n_{1},\ldots, n_{r}])}w_{r, n_{r}}$ is the simplicial complex$$\Delta(G^{\pi}[n_{1},\ldots, n_{r}]-\{x_{r,j}, w_{r, n_{r}}\,\,: 1\leq j\leq |W_{r}|\})*\{w_{r, 1}.\ldots, w_{r, n_{r}-1}\}.$$Notice that if $r=$1, then $G^{\pi}[n_{1},\ldots, n_{r}]-\{x_{r,j}, w_{r, n_{r}}\,\,: 1\leq j\leq |W_{r}|\}$ has no vertices, and hence vertex decomposable. Since $G^{\pi}[n_{1},\ldots, n_{r}]-\{x_{r,j}, w_{r, n_{r}}\,\,: 1\leq j\leq |W_{r}|\}$ is a multi-clique-whiskered graph, by the induction hypothesis, this is vertex decomposable. Since a simplicial join preserves the vertex decomposability, ${\rm link}_{\Delta(G^{\pi}[n_{1},\ldots, n_{r}])}w_{r, n_{r}}$ is vertex decomposable. On the other hand, since $${\rm del}_{\Delta(G^{\pi}[n_{1},\ldots, n_{r}])}w_{r,n_{r}}=\Delta(G^{\pi}[n_{1},\ldots, n_{r}]-w_{r,n_{r}})$$ and $G^{\pi}[n_{1},\ldots, n_{r}]-w_{r,|W_{r}|}$ is a multi-clique-whiskered graph, this is vertex decomposable. Moreover, any independent set of $G^{\pi}[n_{1},\ldots, n_{r}]-N[w_{r,|W_{r}|}]$ is not a maximal independent set of $G^{\pi}[n_{1},\ldots, n_{r}]-w_{r,|W_{r}|}$, and hence $\mathcal{F}({\rm del}_{\Delta(G^{\pi}[n_{1},\ldots, n_{r}])}w_{r,|W_{r}|})\subset\mathcal{F}(\Delta(G^{\pi}[n_{1},\ldots, n_{r}]))$. Therefore, $\Delta(G^{\pi}[n_{1},\ldots, n_{r}])$ is vertex decomposable, which completes the proof. 
\end{proof}

\begin{cor}
Let $G$ be a graph, $\pi=\{W_{1},\ldots, W_{r}\}$ be a clique partition and let  $n_{1},\ldots, n_{r}$ be positive integers. Then the projective dimension of $I(G^{\pi}[n_{1},\ldots, n_{r}])$ is equal to 
$${\rm pd}\hspace{0.05cm}\Bbbk[G^{\pi}[n_{1},\ldots, n_{r}]]/I(G^{\pi}[n_{1},\ldots, n_{r}])={\rm bight}\hspace{0.05cm}I(G^{\pi}[n_{1},\ldots, n_{r}]).$$
\end{cor}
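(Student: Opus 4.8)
The plan is to deduce the corollary immediately from Theorem \ref{seq CM} together with the Auslander--Buchsbaum formula. Throughout write $H = G^{\pi}[n_{1},\ldots, n_{r}]$, let $R = \Bbbk[V(H)]$ and $N = |V(H)|$, and recall that $R/I(H)$ is reduced, so its associated primes are exactly its minimal primes; these are the ideals $P_{C} = (x : x \in C)$ for $C$ a minimal vertex cover of $H$, with ${\rm height}\, P_{C} = |C|$. In particular ${\rm bight}\, I(H) = \max\{|C| : C \in {\rm Min}(H)\}$, which makes the target equality concrete.

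First I would record the inequality ${\rm pd}(R/I(H)) \geq {\rm bight}\, I(H)$, which holds for every graph and uses no hypothesis on $H$. Choose a minimal vertex cover $C$ with $|C| = {\rm bight}\, I(H)$; since $P_{C}$ is an associated prime of $R/I(H)$ we have ${\rm depth}(R/I(H)) \leq \dim(R/P_{C}) = N - |C|$, and hence by Auslander--Buchsbaum ${\rm pd}(R/I(H)) = N - {\rm depth}(R/I(H)) \geq |C| = {\rm bight}\, I(H)$.

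The substance of the proof is the reverse inequality ${\rm pd}(R/I(H)) \leq {\rm bight}\, I(H)$, and this is exactly where the sequential Cohen--Macaulayness from Theorem \ref{seq CM} enters. The key point is that for a sequentially Cohen--Macaulay module the depth equals the least dimension of an associated prime. To see this, take a filtration $0 = D_{0} \subset D_{1} \subset \cdots \subset D_{s} = R/I(H)$ as in the definition of sequential Cohen--Macaulayness, with each $D_{i}/D_{i-1}$ Cohen--Macaulay and $d_{i} := \dim(D_{i}/D_{i-1})$ strictly increasing, so $d_{1} < d_{2} < \cdots < d_{s}$. Running an induction over the short exact sequences $0 \to D_{i-1} \to D_{i} \to D_{i}/D_{i-1} \to 0$ and applying the depth lemma (using $d_{1} < d_{i}$ at each step, so that ${\rm depth}(D_{i}) = \min(d_{1}, d_{i}) = d_{1}$) yields ${\rm depth}(R/I(H)) = d_{1} = \min\{\dim R/P : P \in {\rm Ass}(R/I(H))\} = N - {\rm bight}\, I(H)$. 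Auslander--Buchsbaum then gives ${\rm pd}(R/I(H)) = N - d_{1} = {\rm bight}\, I(H)$, completing the argument.

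The only nontrivial input is the depth identity for sequentially Cohen--Macaulay modules, which I expect to be the main obstacle to a fully self-contained write-up; in practice it is cleaner to quote it from a standard reference rather than reprove it, after which the corollary is immediate from Theorem \ref{seq CM}.
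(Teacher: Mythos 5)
Your proof is correct and follows essentially the same route as the paper: both deduce the statement from Theorem \ref{seq CM} together with the general principle that a sequentially Cohen--Macaulay quotient by a squarefree monomial ideal satisfies ${\rm pd}={\rm bight}$, the paper quoting this from \cite[Remark 5.7]{ds} while you re-derive it via the dimension filtration, the depth lemma, and Auslander--Buchsbaum. Your derivation is sound; the one step you leave to a reference (that $d_{1}$ equals the minimal dimension of an associated prime) is indeed the standard property of the dimension filtration, since ${\rm Ass}(R/I(H))\subseteq\bigcup_{i}{\rm Ass}(D_{i}/D_{i-1})$ and each $D_{i}/D_{i-1}$ is Cohen--Macaulay of dimension $d_{i}\geq d_{1}$.
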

\begin{proof}
Notice that $F\cup N(F)=V(G^{\pi}[n_{1},\ldots, n_{r}])$ holds for any maximal independent set $F$ of $G^{\pi}[n_{1},\ldots, n_{r}]$. Therefore, by \cite[Remark 5.7]{ds} and Theorem \ref{seq CM}, the assertion follows. 
\end{proof}

In the rest of this section, we give explicit minimal free resolutions of the cover ideals of multi-clique-whiskered graphs and very well-covered graphs. Firstly, we investigate multi-clique-whiskered graphs as a corollary of Theorem \ref{free resol}. 

\begin{const}\label{const2}
Let $G$ be a graph, $\pi=\{W_{1},\ldots, W_{r}\}$ be a clique partition and let  $n_{1},\ldots, n_{r}$ be positive integers. We write $W_{i}=\{x_{i,1},\ldots, x_{i,{|W_{i}|}}\}$. Let $G^{\pi}[n_{1},\ldots, n_{r}]$ be the multi-clique-whiskered graph with respect to $\pi$. Set $\mathcal{D}(G^{\pi}[n_{1},\ldots, n_{r}]; C)=V(G)\setminus C$ for any minimal vertex cover $C$ of $G^{\pi}[n_{1},\ldots, n_{r}]$. We consider the following ordering of vertices $V(G)$: 
$$x_{1,1}<x_{1,2}<\cdots<x_{1,|W_{1}|}<x_{2,1}<\cdots<x_{2,|W_{2}|}<\cdots<x_{r,1}<\cdots<x_{r,|W_{r}|}.$$
Under this ordering of vertices of $V(G)$, let $$\mathbb{F}:\cdots\longrightarrow F_{i}\overset{d_{i}}{\longrightarrow} F_{i-1}\overset{d_{i-1}}{\longrightarrow}\cdots \overset{d_{2}}{\longrightarrow}F_{1}\overset{d_{1}}{\longrightarrow}F_{0}\overset{d_{0}}{\longrightarrow}J(G^{\pi}[n_{1},\ldots, n_{r}])\rightarrow0$$
be the complex
\begin{enumerate}
\item[-]whose $i$-th free module $F_{i}$ has a basis the symbols ${\bf f}(C ; \sigma)$ having multidegree ${\bf z}_{C}{\bf x}_{\sigma}$, where $C\in{\rm Min}(G^{\pi}[n_{1},\ldots, n_{r}])$ and 
$\sigma\in\left(\begin{smallmatrix}\mathcal{D}(G^{\pi}[n_{1},\ldots, n_{r}] ; C) \\ i\end{smallmatrix}\right)$ \\
\item[-]and whose differential is given by $d_{0}({\bf f}(C ; \sigma))={\bf z}_{C}$ for $i=0$ and for $i>0$ is defined as follows: 
\begin{align*}
&d_{i}({\bf f}(C ; \sigma)) \\
&=\sum_{x_{i,j}}(-1)^{\alpha(\sigma;x_{i,j})}[{\bf w}_{i}{\bf f}((C\setminus\{w_{i,1},\ldots,  w_{i, n_{i}}\})\cup\{x_{i,j}\};\sigma\setminus\{x_{i,j}\})-x_{i,j}{\bf f}(C;\sigma\setminus\{x_{i,j}\})],
\end{align*}
\end{enumerate}
where the sum runs over the variables $x_{i,j}$ in $\sigma$ and $\alpha(\sigma;x_{i,j})= |\{x_{p.q}\in\sigma\,\,: x_{p,q}>x_{i,j}\}|$, ${\bf w}_{i}=w_{i,1}\cdots w_{i, n_{i}}$. 
\end{const}

\begin{thm}\label{min resol2}
The complex $(\mathbb{F},d_{\bullet})$ given in Construction \ref{const2} is a minimal free resolution of $J(G^{\pi}[n_{1},\ldots, n_{r}])$. 
\end{thm}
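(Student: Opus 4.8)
The plan is to deduce Theorem \ref{min resol2} from Theorem \ref{free resol} by a flat monomial substitution, exploiting the fact that Construction \ref{const2} is obtained from Construction \ref{const1} simply by replacing each whisker variable $w_i$ with the product ${\bf w}_i = w_{i,1}\cdots w_{i,n_i}$. Write $S=\Bbbk[x_{i,j},w_i]$ for the ring of $G^{\pi}$ and $\tilde S=\Bbbk[x_{i,j},w_{i,k}]$ for the ring of $G^{\pi}[n_1,\ldots,n_r]$, and let $\phi\colon S\to\tilde S$ be the ring homomorphism fixing each $x_{i,j}$ and sending $w_i\mapsto{\bf w}_i$. First I would record the combinatorial input: since $W_i\cup\{w_i\}$ induces a complete graph in $G^{\pi}$, while $W_i$ together with the pairwise non-adjacent whiskers $w_{i,1},\ldots,w_{i,n_i}$ exhibits the same ``all vertices of $W_i$, or omit exactly one $x_{i,j_0}$'' pattern in $G^{\pi}[n_1,\ldots,n_r]$, there is a bijection $\mathrm{Min}(G^{\pi})\to\mathrm{Min}(G^{\pi}[n_1,\ldots,n_r])$, established exactly as in Lemma \ref{min}, that preserves the $x$-part of a cover and replaces $w_i\in C$ by $\{w_{i,1},\ldots,w_{i,n_i}\}\subseteq C'$. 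Under this bijection $\phi({\bf z}_C)={\bf z}_{C'}$ and $\mathcal D(G^{\pi};C)=\mathcal D(G^{\pi}[n_1,\ldots,n_r];C')$, so $\phi$ carries the generators of $J(G^{\pi})$ bijectively onto those of $J(G^{\pi}[n_1,\ldots,n_r])$; in particular $J(G^{\pi})\tilde S=J(G^{\pi}[n_1,\ldots,n_r])$.

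The key algebraic step is that $\tilde S$ is a free $S$-module via $\phi$. Indeed $\tilde S=\Bbbk[x_{i,j}]\otimes_{\Bbbk}\bigotimes_i\Bbbk[w_{i,1},\ldots,w_{i,n_i}]$, and on the $i$-th tensor factor the map $\Bbbk[w_i]\to\Bbbk[w_{i,1},\ldots,w_{i,n_i}]$, $w_i\mapsto{\bf w}_i$, is free: the monomials $w_{i,1}^{a_1}\cdots w_{i,n_i}^{a_{n_i}}$ with $\min_k a_k=0$ form a basis over $\Bbbk[{\bf w}_i]$, since each monomial factors uniquely as ${\bf w}_i^{\,b}$ times such a basis monomial with $b=\min_k a_k$. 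A tensor product of free modules is free, so $\phi$ is (faithfully) flat. Consequently, applying $-\otimes_S\tilde S$ to the minimal free resolution $\mathbb F$ of $J(G^{\pi})$ furnished by Theorem \ref{free resol} yields a complex $\mathbb F\otimes_S\tilde S$ whose homology is $H_i(\mathbb F)\otimes_S\tilde S$; hence it is exact in positive degrees with $H_0=J(G^{\pi})\tilde S=J(G^{\pi}[n_1,\ldots,n_r])$, i.e.\ a free resolution of $J(G^{\pi}[n_1,\ldots,n_r])$.

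It remains to identify $\mathbb F\otimes_S\tilde S$ with the complex of Construction \ref{const2} and to check minimality. Applying $\phi$ to the matrices of Construction \ref{const1} replaces each entry $\pm w_i$ by $\pm{\bf w}_i$ and fixes each $\pm x_{i,j}$, while the basis symbols ${\bf f}(C;\sigma)$ are relabelled along the bijection above, with multidegrees $\phi({\bf z}_C{\bf x}_\sigma)={\bf z}_{C'}{\bf x}_\sigma$; this is exactly the differential $d_i$ of Construction \ref{const2}, which one checks is multihomogeneous of the correct degree. For minimality, every entry of each $\phi(d_i)$ lies in the graded maximal ideal $\mathfrak m_{\tilde S}=(x_{i,j},w_{i,k})$, because ${\bf w}_i$ is a monomial of positive degree $n_i\geq 1$; thus there are no unit entries and the resolution is minimal, so by uniqueness it is \emph{the} minimal free resolution. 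I expect the only genuinely delicate point to be the freeness of the non-standard-graded substitution $\phi$ together with the verification that the extended ideal $J(G^{\pi})\tilde S$ equals $J(G^{\pi}[n_1,\ldots,n_r])$ on the nose; once these are in place, the transfer of the resolution and its minimality are formal.
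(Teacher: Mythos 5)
Your proof is correct, and it follows the same overall strategy as the paper: both deduce the theorem from Theorem \ref{free resol} by transporting the resolution of Construction \ref{const1} along the substitution $w_i\mapsto {\bf w}_i=w_{i,1}\cdots w_{i,n_i}$. The difference lies in how that transport is justified. The paper regrades the ring of $G^{\pi}$ by a nonstandard $\mathbb{Z}^{N+r}$-grading in which $\deg(w_i)$ is exactly the multidegree of ${\bf w}_i$, observes that Construction \ref{const1} remains a minimal multigraded resolution, and then asserts that performing the substitution yields Construction \ref{const2}; the step in which exactness survives the substitution is left implicit. You instead prove that the substitution map $\phi\colon S\to\tilde S$ makes $\tilde S$ a free $S$-module (with the explicit basis, on each tensor factor, of monomials $w_{i,1}^{a_1}\cdots w_{i,n_i}^{a_{n_i}}$ with $\min_k a_k=0$), so that $-\otimes_S\tilde S$ preserves exactness; you then check separately that the extended ideal equals $J(G^{\pi}[n_1,\ldots,n_r])$ via the bijection of minimal vertex covers, and that minimality persists because ${\bf w}_i$ is a nonunit. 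This flat-base-change argument is self-contained and supplies precisely the justification that the paper's regrading argument gestures at, while the paper's version buys brevity and a uniform multigraded bookkeeping that feeds directly into Corollary \ref{betti of multi}. Both routes are sound; yours is the more complete write-up of the key step.
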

\begin{proof}
Set $N=\sum_{i=1}^m n_{i}$. We consider a minimal free resolution provided in Theorem \ref{free resol} as a $\mathbb{Z}^{N+r}$–graded resolution. For $i=1,\ldots,m$, we set $I_{i}=\sum_{s<i}n_{s}$. On $S=k[x_1,\dots,x_r,w_1,\dots,w_m]$,  we define
$$\deg(w_i)=\sum_{t=I_i+1}^{I_i+n_i} e_t \in \mathbb{Z}^{N+r}\mbox{ and }
\deg(x_j)=e_{N+j}\in \mathbb{Z}^{N+r},$$
where $e_1,\dots,e_{N+r}$ denote the standard basis vectors of $\mathbb{Z}^{N+r}$. Then, under this grading, a free resolution constructed in Construction \ref{const1} remains a minimal free resolution, as guaranteed by Theorem \ref{free resol}. Replacing $w_{i}$ by ${\bf w}_{i}=w_{i,1}\cdots w_{i, n_{i}}$ for each $i$, one has $J(G^{\pi}[n_{1},\ldots, n_{r}])$. Therefore, by considering a minimal free resolution provided in Construction \ref{const1} as a $\mathbb{Z}^{N+r}$ in $\Bbbk[x_{1},\ldots, x_{r}, w_{1,1},\ldots, w_{1, n_{1}},\ldots, w_{r,n_{r}}]$, we see that a resolution provided in Construction \ref{const2} is a minimal free resolution of $J(G^{\pi}[n_{1},\ldots, n_{r}])$, as required. 
\end{proof}

\begin{cor}\label{betti of multi}
Let $G$ be a graph, $\pi=\{W_{1},\ldots, W_{r}\}$ be a clique partition and let  $n_{1},\ldots, n_{r}$ be positive integers. Then we have 
\begin{enumerate}
\item $\beta_{i, i+j}(J(G^{\pi}[n_{1},\ldots, n_{r}]))=\displaystyle \sum_{\substack{C\in{\rm Min}(G^{\pi}[n_{1},\ldots, n_{r}]), \\[0.1cm] |C|=j}}
\begin{pmatrix}
|V(G)\setminus C| \\ i
\end{pmatrix}\mbox{ for all }i, j$ \\[0.15cm]
\item ${\rm pd}(J(G^{\pi}[n_{1},\ldots, n_{r}]))=\max\{|V(G)\setminus C|\,\,:C\in\mathcal{C}(G^{\pi}[n_{1},\ldots, n_{r}])\}.$
\end{enumerate}
In particular, the graded Betti numbers of $J(G^{\pi}[n_{1},\ldots, n_{r}])$ do not depend upon the characteristic of the underlying field $\Bbbk$. 
\end{cor}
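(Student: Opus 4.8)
The plan is to read off the graded Betti numbers directly from the minimal free resolution exhibited in Construction \ref{const2}, whose minimality is guaranteed by Theorem \ref{min resol2}. Since the differentials of a graded minimal free resolution have all entries in the homogeneous maximal ideal, each basis element of the free module $F_{i}$ contributes one summand $S(-k)$ with $k$ its internal degree; hence $\beta_{i,k}(J(G^{\pi}[n_{1},\ldots,n_{r}]))$ equals the number of basis elements of $F_{i}$ of internal degree $k$. The entire argument therefore reduces to tallying the multidegrees of the generators ${\bf f}(C;\sigma)$.

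First I would record the internal degree of each generator. By Construction \ref{const2}, the symbol ${\bf f}(C;\sigma)$ carries multidegree ${\bf z}_{C}{\bf x}_{\sigma}$, where $C\in{\rm Min}(G^{\pi}[n_{1},\ldots,n_{r}])$ and $\sigma\in\left(\begin{smallmatrix}\mathcal{D}(G^{\pi}[n_{1},\ldots,n_{r}];C) \\ i\end{smallmatrix}\right)$ with $\mathcal{D}(G^{\pi}[n_{1},\ldots,n_{r}];C)=V(G)\setminus C$. The monomial ${\bf z}_{C}={\bf x}_{C_{x}}{\bf w}_{C_{w}}$ is the squarefree product of the variables indexed by all vertices lying in the cover $C$, so its total degree is $|C|$; it is important here that $|C|$ counts the whisker vertices $w_{i,k}$ as well, not only the vertices of $G$. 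Since ${\bf x}_{\sigma}$ has degree $|\sigma|=i$, the generator ${\bf f}(C;\sigma)$ lies in internal degree $|C|+i$.

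Next I would group the basis of $F_{i}$ according to the value $j=|C|$. The generators of internal degree $i+j$ are exactly the ${\bf f}(C;\sigma)$ with $|C|=j$ and $\sigma$ an arbitrary $i$-element subset of $V(G)\setminus C$, and there are $\binom{|V(G)\setminus C|}{i}$ admissible $\sigma$ for each such $C$; summing over the minimal vertex covers $C$ with $|C|=j$ yields the formula in $(1)$. For the projective dimension in $(2)$, I would observe that $F_{i}\neq0$ precisely when $\binom{|V(G)\setminus C|}{i}\neq0$ for some $C$, that is, when $i\leq|V(G)\setminus C|$ for some minimal vertex cover $C$; the largest such $i$ is therefore $\max\{|V(G)\setminus C| : C\in{\rm Min}(G^{\pi}[n_{1},\ldots,n_{r}])\}$, as claimed.

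I expect no genuine obstacle, since the resolution is explicit and minimal and the whole computation is a degree count; the only point demanding care is to separate cleanly the two graded contributions, namely the full cover cardinality $|C|$ (over the enlarged whisker vertex set) governing the internal degree and the size $i$ of $\sigma$ governing the homological degree. Finally, because every Betti number is produced as a count of combinatorial objects, with no recourse to simplicial homology with coefficients in $\Bbbk$, the graded Betti numbers are manifestly independent of the characteristic of $\Bbbk$, which gives the last assertion.
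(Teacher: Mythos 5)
Your proposal is correct and follows exactly the route the paper intends: the corollary is stated without proof as an immediate consequence of Theorem \ref{min resol2}, obtained by counting the basis elements ${\bf f}(C;\sigma)$ of each $F_{i}$ by internal degree $|C|+i$, just as you do. You also correctly handle the one point that needs care (that $|C|$ counts the whisker vertices $w_{i,k}$, so the grading is governed by the full cover size) and sensibly read the paper's $\mathcal{C}(G^{\pi}[n_{1},\ldots,n_{r}])$ in part (2) as ${\rm Min}(G^{\pi}[n_{1},\ldots,n_{r}])$.
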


Thanks to Theorem \ref{min resol2}, we obtain the regularity of $I(G^{\pi}[n_{1},\ldots, n_{r}])$ as a generalization of \cite[Corollary 3.6]{lj} and \cite[Corollary 4.4]{mpt}. 

\begin{cor}\label{reg of multi}
Let $G$ be a graph, $\pi=\{W_{1},\ldots, W_{r}\}$ be a clique partition and let  $n_{1},\ldots, n_{r}$ be positive integers, $S=\Bbbk[V(G^{\pi}[n_{1},\ldots, n_{r}])]$. Then the regularity of $I(G^{\pi}[n_{1},\ldots, n_{r}])$ is equal to $${\rm reg}S/I(G^{\pi}[n_{1},\ldots, n_{r}])={\rm im}(G^{\pi}[n_{1},\ldots, n_{r}]).$$
\end{cor}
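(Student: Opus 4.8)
The plan is to prove the two inequalities $\mathrm{reg}\, S/I(G^{\pi}[n_1,\ldots,n_r]) \geq \mathrm{im}(G^{\pi}[n_1,\ldots,n_r])$ and $\mathrm{reg}\, S/I(G^{\pi}[n_1,\ldots,n_r]) \leq \mathrm{im}(G^{\pi}[n_1,\ldots,n_r])$ separately, following the strategy already used in Corollary \ref{reg of clique-whisker}. The lower bound is general and requires no special structure: for any graph $H$ one has $\mathrm{reg}\, S/I(H) \geq \mathrm{im}(H)$ by \cite[Lemma 2.2]{k}, so this half is immediate with $H = G^{\pi}[n_1,\ldots,n_r]$.

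The substance lies in the upper bound, and here I would exploit the explicit resolution built in Theorem \ref{min resol2}. The key is the Terai-type duality \cite[Corollary 0.3]{t}, which gives $\mathrm{reg}\, S/I(G^{\pi}[n_1,\ldots,n_r]) = \mathrm{pd}\, J(G^{\pi}[n_1,\ldots,n_r])$, since the edge ideal and cover ideal are Alexander dual. By Corollary \ref{betti of multi}(2), the projective dimension of the cover ideal is exactly $\max\{\,|V(G)\setminus C| : C \in \mathrm{Min}(G^{\pi}[n_1,\ldots,n_r])\,\}$. So it remains to choose a minimal vertex cover $C$ attaining this maximum and to verify that the independent set $V(G)\setminus C$ of $G$ extends to an induced matching of $G^{\pi}[n_1,\ldots,n_r]$ of size at least $|V(G)\setminus C|$.

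First I would fix a minimal vertex cover $C$ with $|V(G)\setminus C| = \mathrm{pd}\, J(G^{\pi}[n_1,\ldots,n_r])$. Since $C$ is in particular a vertex cover of $G$, the set $V(G)\setminus C$ is an independent set in $G$. For each vertex $x_{i,j} \in V(G)\setminus C$, I would pair it with one of its whisker neighbors $w_{i,k}$ to form the edge $\{x_{i,j}, w_{i,k}\}$ of $G^{\pi}[n_1,\ldots,n_r]$; the collection $M$ of these edges has cardinality $|V(G)\setminus C|$. The main obstacle, and the step that needs genuine care, is confirming that $M$ is an \emph{induced} matching rather than merely a matching: one must check that distinct chosen vertices $x_{i,j}, x_{i',j'} \in V(G)\setminus C$ are nonadjacent in $G^{\pi}[n_1,\ldots,n_r]$ (which follows from independence in $G$, since whiskering adds no edges among the $x$'s), that the whisker endpoints $w_{i,k}$ have degree governed only by the clique $W_i$, and crucially that no two chosen $x$-vertices lie in a common clique $W_i$ in a way that would place an extra edge inside the union of $M$. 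This last point is exactly where the clique-partition structure and the independence of $V(G)\setminus C$ must be combined, and where I expect to spend the most effort.

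Granting that $M$ is an induced matching, I obtain $|V(G)\setminus C| = |M| \leq \mathrm{im}(G^{\pi}[n_1,\ldots,n_r])$, and chaining the equalities yields
\begin{align*}
\mathrm{reg}\, S/I(G^{\pi}[n_1,\ldots,n_r]) &= \mathrm{pd}\, J(G^{\pi}[n_1,\ldots,n_r]) = |V(G)\setminus C| \leq \mathrm{im}(G^{\pi}[n_1,\ldots,n_r]).
\end{align*}
Combined with the reverse inequality from \cite[Lemma 2.2]{k}, the two bounds force equality, completing the proof. The argument is a clean generalization of Corollary \ref{reg of clique-whisker}, the new ingredient being that each clique $W_i$ now carries $n_i$ whiskers rather than one, which only enlarges the pool of available whisker edges and therefore does not disturb the induced-matching construction.
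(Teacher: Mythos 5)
Your proposal is correct and follows essentially the same route as the paper: the paper's proof is just a pointer to the argument for Corollary \ref{reg of clique-whisker}, namely the lower bound from \cite[Lemma 2.2]{k} and the upper bound via Terai's duality ${\rm reg}\,S/I = {\rm pd}\,J$ together with the Betti-number formula for the cover ideal. Your extra verification that the whisker edges form an \emph{induced} matching (using that an independent set meets each clique $W_i$ in at most one vertex, so the chosen whisker endpoints are distinct and carry no stray edges) is exactly the detail the paper leaves implicit, and it goes through unchanged with $n_i$ whiskers per clique.
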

\begin{proof}
See the proof of Corollary \ref{reg of multi}. 
\end{proof}

\begin{cor}\label{local coho of multi}
Let $G$ be a graph, and let $\pi = \{W_1, \ldots, W_r\}$ be a clique partition of $G$. Let $n_1, \ldots, n_r$ be positive integers, and set $S = \Bbbk[V(G^{\pi}[n_1, \ldots, n_r])]$. Then, for a positive integer $j$, the Hilbert series of $H^j_{\mathfrak{m}}(S/I(G^{\pi}[n_1, \ldots, n_r]))$ is given by the following formula:
\begin{align*}
F(H^j_{\mathfrak{m}}(S/I(G^{\pi}[n_{1},\ldots, n_{r}])),t) 
& =\displaystyle\sum_\ell\dim_\Bbbk[H^j_{\mathfrak{m}}(S/I(G^{\pi}[n_{1},\ldots, n_{r}]))]_{\ell} \ t^{\ell} \\[0.15cm]
& =\displaystyle\sum_{i}\displaystyle\sum_{\substack{C\in{\rm Min}(G^{\pi}[n_{1},\ldots, n_{r}]) \\[0.15cm] |C|=|V(G^{\pi}[n_{1},\ldots, n_{r}])|-j}}
\begin{pmatrix}
|V(G)\setminus C| \\ i
\end{pmatrix}\left(\frac{t^{-1}}{1-t^{-1}}\right)^{j-i}.
\end{align*}
\end{cor}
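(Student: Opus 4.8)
The plan is to derive the Hilbert series formula directly from Corollary \ref{hilb} together with the Betti number computation in Corollary \ref{betti of multi}. Recall that Corollary \ref{hilb} expresses the Hilbert series of the local cohomology module $H^j_{\mathfrak{m}}(S/I(\Delta))$ in terms of the graded Betti numbers of the Alexander dual ideal $I_{\Delta^{\vee}}$. In our setting, $\Delta = \Delta(G^{\pi}[n_{1},\ldots, n_{r}])$ is the independence complex, so that $I_{\Delta} = I(G^{\pi}[n_{1},\ldots, n_{r}])$ and, by the correspondence between edge ideals and cover ideals under Alexander duality, $I_{\Delta^{\vee}} = J(G^{\pi}[n_{1},\ldots, n_{r}])$. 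Thus the last line of Corollary \ref{hilb} becomes
\begin{align*}
F(H^j_{\mathfrak{m}}(S/I(G^{\pi}[n_{1},\ldots, n_{r}])),t)
=\displaystyle\sum_{i}\beta_{i,\,n-j+i}(J(G^{\pi}[n_{1},\ldots, n_{r}]))\left(\frac{t^{-1}}{1-t^{-1}}\right)^{j-i},
\end{align*}
where $n=|V(G^{\pi}[n_{1},\ldots, n_{r}])|$ is the total number of variables of $S$.

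Next I would substitute the explicit Betti numbers from Corollary \ref{betti of multi}. That corollary records $\beta_{i,i+j'}(J(G^{\pi}[n_{1},\ldots, n_{r}]))$ as a sum over minimal vertex covers $C$ of fixed cardinality $|C|=j'$, with the binomial coefficient $\binom{|V(G)\setminus C|}{i}$. The bookkeeping step is to match the homological and internal degrees: the index $n-j+i$ appearing above must be rewritten as $i+j'$, forcing $j' = n-j$, i.e. I should sum over minimal vertex covers $C$ with $|C| = n - j = |V(G^{\pi}[n_{1},\ldots, n_{r}])| - j$. Making this substitution reproduces exactly the inner sum over $\{C\in{\rm Min}(G^{\pi}[n_{1},\ldots, n_{r}]) : |C|=|V(G^{\pi}[n_{1},\ldots, n_{r}])|-j\}$ with binomial coefficient $\binom{|V(G)\setminus C|}{i}$ and the power $\left(\frac{t^{-1}}{1-t^{-1}}\right)^{j-i}$ claimed in the statement.

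The remaining point is to confirm that we are entitled to apply Corollary \ref{hilb} for a single fixed cohomological degree $j$. Since $G^{\pi}[n_{1},\ldots, n_{r}]$ is sequentially Cohen--Macaulay by Theorem \ref{seq CM}, the nonvanishing of local cohomology spreads across several degrees $j$, so unlike the clique-whiskered case there is no single top-degree module; nevertheless Corollary \ref{hilb} is stated degree-by-degree and holds for each $j$ without any Cohen--Macaulay hypothesis, so the formula is valid verbatim for every positive integer $j$.

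The step I expect to be the main obstacle is the degree-matching bookkeeping: one must be careful that the multigrading collapse performed in the proof of Theorem \ref{min resol2} (replacing each $w_i$ by the product ${\bf w}_i = w_{i,1}\cdots w_{i,n_i}$) does not alter the homological degree $i$ but only redistributes the internal degree, so that the identity $\beta_{i,i+j'}$ indexed by $|C|=j'$ continues to hold with $|C|$ measured in the full vertex set of the multi-clique-whiskered graph. Once the index $n-j+i = i+(n-j)$ is verified against the cardinality constraint $|C| = n-j$, the substitution is purely formal and the two displayed sums coincide term by term.
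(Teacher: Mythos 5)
Your derivation is correct and is exactly the route the paper intends: the corollary is stated without proof as an immediate consequence of Corollary \ref{hilb} applied to $\Delta=\Delta(G^{\pi}[n_{1},\ldots,n_{r}])$ (so that $I_{\Delta^{\vee}}=J(G^{\pi}[n_{1},\ldots,n_{r}])$) combined with the Betti numbers from Corollary \ref{betti of multi}, and your degree-matching $n-j+i=i+|C|$, forcing $|C|=|V(G^{\pi}[n_{1},\ldots,n_{r}])|-j$, is the only bookkeeping required. Your side remarks, that Corollary \ref{hilb} needs no Cohen--Macaulay hypothesis and that the multigrading collapse in Theorem \ref{min resol2} leaves the homological degree untouched, are both accurate.
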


\begin{rem}
As mentioned above, multi-clique-whiskered graphs are a generalization of multi-whisker graphs. Therefore, for multi-whisker graphs, Corollary \ref{local coho of multi} provides a description of the Hilbert series of the local cohomology modules in a form different from that in \cite[Theorem 4.1]{mpt}.
\end{rem}

Finally, we provide an explicit minimal free resolution of the cover ideal of a very well-covered graph as a corollary of \cite[Theorem 3.5]{cf}. Recall that a graph $G$ is called \emph{very well-covered} if it has no isolated vertices and all minimal vertex covers have the same cardinality, which is equal to half the number of its vertices. Let $H$ be a Cohen--Macaulay very well-covered graph with $2d_0$ vertices. We follow the notation in \cite{kpty}. According to \cite{crt}, we may assume the following condition:

\vspace{0.1cm}
\begin{itemize}
\item[$(\ast)$] $V(H)=X_{[d_0]}\cup Y_{[d_0]}$, where $X_{[d_0]}=\{x_1,\ldots,x_{d_0}\}$ is a minimal vertex cover of $H$ and $Y_{[d_0]}=\{y_1,\ldots,y_{d_0}\}$ is a maximal independent set of $H$ such that $\{x_1y_1,\ldots,x_{d_0}y_{d_0}\} \subseteq E(H)$. Moreover, $x_iy_j \in E(H)$ implies that $i\le j$.
\end{itemize}
To study the structure of non-Cohen--Macaulay very well-covered graphs, a particularly useful result is provided in~\cite[Theorem~3.5]{kpty}. Let $G$ be a graph with $xy\in E(G)$, then, the graph $G^{\prime}$, obtained by replacing the edge $xy$ in $G$ with a complete bipartite graph $K_{i, i}$ is defined as $$V(G')=(V(G)\setminus \{x,y\} )\cup \{x_1,\ldots,x_i\}\cup\{y_1 ,\ldots,y_i\}$$ and  
\[
\begin{array}{lll}
E(G')	& = & E(G_{V(G)\setminus \{x,y\}}) \cup\{x_jy_k\,\,: 1\le j,k\le i\} \\[0.15cm]
& & \hspace{2.65cm}\cup\ \{x_jz\,\,: 1\le j\le i,\ z \in V(G) \setminus \{x,y \},\ xz\in E(G)\} \\[0.15cm]
& & \hspace{2.65cm}\cup\ \{y_jz\,\,: 1\le j\le i,\ z \in V(G) \setminus \{x,y \},\ yz\in E(G)\}.
\end{array}
\]
Let $H$ be a Cohen--Macaulay very well-covered graph with $2d_0$ vertices and assume the condition $(\ast)$. Let $H^{\prime}$ be a graph with the vertex set $$V(H^{\prime})=\bigcup_{i=1}^{d_0}\Big(\big\{ x_{i1},\ldots,x_{in_i}\big\}\cup\big\{y_{i1},\ldots,y_{in_i}\big\}\Big)$$ which is obtained by replacing the edges $x_1y_1,\ldots,x_{d_0}y_{d_0}$ in $H$ with the complete bipartite graphs $K_{n_1,n_1},\ldots,K_{n_{d_0},n_{d_0}}$, respectively. We write $H(n_1,\ldots,n_{d_0})$ for $H^{\prime}$. The following theorem is the result mentioned above.

\begin{thm}[\cite{kpty}, Theorem 3.5]\label{structure}
Let $G$ be a very well-covered graph on the vertex set $X_{[d]}\cup Y_{[d]}$. Then there exist positive integers $n_1,\ldots,n_{d_0}$ with $\sum_{i\in [d_0] }n_i=d$ and a Cohen--Macaulay very well-covered graph $H$ on the vertex set $X_ {[d_0]}\cup Y_{[d_0]}$ such that $G\cong H(n_1,\ldots,n_{d_0})$.
\end{thm}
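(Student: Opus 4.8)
The plan is to recover the Cohen--Macaulay ``skeleton'' $H$ from $G$ by collapsing the complete bipartite blow-up parts, which I will identify as the classes of a natural preorder built from the matching. By the characterization of very well-covered graphs (cf.\ \cite{crt}), after relabeling I may assume that $X_{[d]}=\{x_1,\dots,x_d\}$ is a minimal vertex cover, $Y_{[d]}=\{y_1,\dots,y_d\}$ is a maximal independent set, $\{x_1y_1,\dots,x_dy_d\}$ is a perfect matching, and the defining adjacency conditions of very well-covered graphs hold. First I would introduce on $[d]$ the relation $i\preceq j$ defined by $x_iy_j\in E(G)$. Using the matching condition $x_iy_i\in E(G)$ and the transitivity condition (if $x_iy_j,x_jy_k\in E(G)$ then $x_iy_k\in E(G)$), this relation is a preorder. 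I then set $i\sim j$ if and only if $i\preceq j$ and $j\preceq i$; this is an equivalence relation, and I let $C_1,\dots,C_{d_0}$ be its classes, with $n_l=|C_l|$, so that $\sum_{l=1}^{d_0}n_l=d$.

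The heart of the argument is to show that each class $C_l$ is exactly a balanced complete bipartite blow-up part. The key claim is that $i\sim j$ forces $x_i,x_j$ to be false twins and $y_i,y_j$ to be false twins, i.e.\ $N_G(x_i)=N_G(x_j)$ and $N_G(y_i)=N_G(y_j)$. Since $N_G(y_i)\subseteq X_{[d]}$ (as $Y_{[d]}$ is independent) and $x_ky_i\in E(G)\iff k\preceq i$, the equality $N_G(y_i)=N_G(y_j)$ is immediate from $i\sim j$; the $Y$-neighbors of $x_i$ likewise agree with those of $x_j$ because $i\preceq k\iff j\preceq k$. The remaining point, equality of the $X$-neighbors of $x_i$ and $x_j$, is where the very well-covered condition relating $X$--$X$ edges to the matching must be invoked. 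Granting the twin property, each class yields a complete bipartite graph $K_{n_l,n_l}$ on $\{x_i\mid i\in C_l\}\cup\{y_i\mid i\in C_l\}$ (the two sides are independent because false twins are nonadjacent, and $x_iy_j\in E(G)$ for all $i,j\in C_l$), and the two sides are automatically balanced of size $n_l$ since each index $i\in C_l$ contributes precisely one $x_i$ and one matched $y_i$.

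Next I would assemble $H$. I choose one representative from each class, set $V(H)=X_{[d_0]}\cup Y_{[d_0]}$, and declare $x_l^{H}y_m^{H}\in E(H)$ (resp.\ $x_l^{H}x_m^{H}\in E(H)$) whenever $x_iy_j\in E(G)$ (resp.\ $x_ix_j\in E(G)$) for some, equivalently every, $i\in C_l$ and $j\in C_m$; this is well defined precisely by the twin property just proved, which shows that every adjacency of $G$ depends only on the classes. It is then routine that $H$ is very well-covered with perfect matching $\{x_l^{H}y_l^{H}\}$, and that $G\cong H(n_1,\dots,n_{d_0})$ by comparing the definition of the blow-up operation with the class structure. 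Moreover the induced preorder on $[d_0]$ is now a genuine partial order: if $l\preceq m$ and $m\preceq l$, then representatives $i\in C_l$, $j\in C_m$ satisfy $i\sim j$, so $C_l=C_m$. Extending this partial order to a linear order and relabeling accordingly produces condition $(\ast)$ for $H$, whence $H$ is Cohen--Macaulay by \cite{crt}.

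The step I expect to be the main obstacle is the $X$-side of the twin property in the second paragraph --- showing that collapsing the preorder is consistent on the $X$--$X$ edges, so that $x_i$ and $x_j$ genuinely have equal neighborhoods whenever $i\sim j$. This is the only place that uses a very well-covered condition beyond reflexivity and transitivity of $\preceq$, and it is exactly what guarantees both that the blow-up operation reconstructs $G$ faithfully and that the quotient graph $H$ is well defined; once it is in hand, the verification that $H$ is Cohen--Macaulay reduces to the purely order-theoretic fact that the quotient of a preorder by its symmetric part is a partial order.
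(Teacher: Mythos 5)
This statement is quoted in the paper from \cite[Theorem~3.5]{kpty} and is not proved there, so there is no in-paper argument to compare against; your proposal is a reconstruction of the standard proof, and its overall strategy --- build the preorder $i\preceq j \iff x_iy_j\in E(G)$ from Favaron-type characterization of very well-covered graphs, quotient by its symmetric part, and recognize the classes as the $K_{n_l,n_l}$-blocks --- is the right one and matches how this is done in the literature. The one substantive step you explicitly leave open (``granting the twin property'') is the $X$--$X$ part of the twin claim, and you should close it, since it is exactly where the proof lives. It does go through: the characterization of very well-covered graphs (Favaron's property, as used in \cite{crt}) says that for each matching edge $x_iy_i$ one has $N(x_i)\cap N(y_i)=\emptyset$ and every vertex of $N(x_i)\setminus\{y_i\}$ is adjacent to every vertex of $N(y_i)\setminus\{x_i\}$. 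So if $i\sim j$ with $i\neq j$, then $x_j\in N(y_i)\setminus\{x_i\}$, whence any $x_k\in N(x_i)\cap X$ is adjacent to $x_j$; by symmetry $N(x_i)\cap X=N(x_j)\cap X$. The same property gives the non-adjacency you assert but do not justify: $x_i\in N(y_j)$ and $N(x_j)\cap N(y_j)=\emptyset$ force $x_ix_j\notin E(G)$, so the two sides of each class really do span an induced $K_{n_l,n_l}$. The transitivity of $\preceq$ that you invoke is also an instance of the same property applied to the edge $x_jy_j$. With these points written out, the rest (well-definedness of $H$, its very well-coveredness, antisymmetry of the quotient order and its linear extension to obtain condition $(\ast)$ and hence Cohen--Macaulayness of $H$ via \cite{crt}, and the identification $G\cong H(n_1,\ldots,n_{d_0})$) is routine exactly as you describe.
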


By Theorem \ref{structure} and \cite[Theorem 3.5]{cf}, we construct a minimal free resolution of the cover ideal of a very well-covered graph $G=H(n_{1},\ldots, n_{d_{0}})$. 

\begin{const}\label{const3}
Let $G$ be a very well-covered graph on the vertex set $X_{[d]}\cup Y_{[d]}$ and let $n_1,\ldots,n_{d_0}$ be positive integers with $\sum_{i\in [d_0] }n_i=d$ and a Cohen--Macaulay very well-covered graph $H$ on the vertex set $X_ {[d_0]}\cup Y_{[d_0]}$ such that $G\cong H(n_1,\ldots,n_{d_0})$. 
We write $X_{[d_{0}]}=\{x^{\prime}_{1},\ldots, x^{\prime}_{d_{0}}\}$ and $Y_{[d_{0}]}=\{y^{\prime}_{1},\ldots, y^{\prime}_{d_{0}}\}$. For all $i=1,\ldots, d_{0}$, we define a degree of $x^{\prime}_{i}$ and $y^{\prime}_{i}$ as
$${\rm deg}(x_{i}^{\prime})={\rm deg}(y^{\prime}_{i})=(0, \ldots, 0, 1, 1,\ldots, 1, 0, \ldots, 0),$$where the $j$-th position of ${\rm deg}(x_{i}^{\prime})$ is 1 for $j=i, i+1,\ldots, i+n_{i}$, and 0 otherwise. Under this grading for variables, let$$\mathbb{F}:\cdots\longrightarrow F_{i}\overset{d_{i}}{\longrightarrow} F_{i-1}\overset{d_{i-1}}{\longrightarrow}\cdots \overset{d_{1}}{\longrightarrow}F_{0}\overset{d_{0}}{\longrightarrow}J(H(n_{1},\ldots, n_{d_{0}}))\rightarrow0,$$where 
\begin{enumerate}
\item[-] whose $i$-th free module $F_{i}$ has a basis the symbols ${\bf f}(C ; \sigma)$ having multidegree ${\bf z}_{C}{\bf x}^{\prime}_{\sigma}$, where $C\in{\rm Min}(H)$ and 
$\sigma\in\left(
\begin{smallmatrix}
\mathcal{C}(H ; C) \\ i
\end{smallmatrix}
\right)
$ \\
\item[-]and whose differential is given by $d_{0}^{W(G)}({\bf f}(C ; \sigma))={\bf z}_{C}$ for $i=0$ and for $i>0$ is defined as follows: 
\begin{align*}
&d_{i}({\bf f}(C ; \sigma)) \\
&=\displaystyle\sum_{x^{\prime}_{s}\in\sigma}(-1)^{\alpha(\sigma;x^{\prime}_{s})}[y^{\prime}_{s}{\bf f}((C\setminus \{y^{\prime}_{s}\})\cup \{x^{\prime}_{s}\};\sigma\setminus \{x^{\prime}_{s}\})-x^{\prime}_{s}{\bf f}(C;\sigma\setminus \{x^{\prime}_{s}\})],
\end{align*}
\end{enumerate}
where $\alpha(\sigma;x^{\prime}_{s})=|\{x^{\prime}_{j}\in\sigma\,\,: j>s\}|$. 
\end{const}

\begin{thm}\label{min resol3}
Let $G=H(n_{1},\ldots, n_{d_{0}})$ be a very well-covered graph. Then the complex $(\mathbb{F},d_{\bullet})$ given in Construction \ref{const3} is a minimal free resolution of $J(G)$
\end{thm}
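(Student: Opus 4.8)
The plan is to mirror the proof of Theorem \ref{min resol2}, with the Cohen--Macaulay very well-covered graph $H$ playing the role that $G^\pi$ played there, and with each edge being ``blown up'' into a complete bipartite graph in place of multiplying a whisker. By Theorem \ref{structure} we may write $G \cong H(n_1, \ldots, n_{d_0})$ for a Cohen--Macaulay very well-covered graph $H$ on $X_{[d_0]} \cup Y_{[d_0]}$ satisfying $(\ast)$. The starting point is \cite[Theorem 3.5]{cf}: in the polynomial ring $R := \Bbbk[x'_1, \ldots, x'_{d_0}, y'_1, \ldots, y'_{d_0}]$ with the ordinary grading, the complex of Construction \ref{const3}, with the symbols $x'_s, y'_s$ read as the genuine variables of $R$, is a minimal free resolution of $J(H)$.

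Next I would record the two combinatorial inputs that transport this resolution to $G$. Introduce the substitution $x'_i \mapsto {\bf x}'_i := x_{i1}\cdots x_{in_i}$ and $y'_i \mapsto {\bf y}'_i := y_{i1}\cdots y_{in_i}$, and set $R' := \Bbbk[{\bf x}'_1, \ldots, {\bf x}'_{d_0}, {\bf y}'_1, \ldots, {\bf y}'_{d_0}] \subseteq S := \Bbbk[V(G)]$. The first input is a bijection ${\rm Min}(H) \to {\rm Min}(G)$: the internal edges of each $K_{n_i,n_i}$ force every minimal vertex cover of $G$ to contain, on block $i$, either the entire $x$-side or the entire $y$-side, and an inspection of the edges created by the iterated edge-replacement shows, using $(\ast)$, that the admissible block-choices obey exactly the compatibility relations governing the choices ``$x'_i$ or $y'_i$'' that describe ${\rm Min}(H)$. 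Under this bijection the substitution sends ${\bf z}_C$ to the cover monomial of the corresponding minimal cover of $G$. The second input, immediate from the first, is that every cover monomial of $G$ already lies in $R'$; hence, identifying $R$ with $R'$ via $x'_i \mapsto {\bf x}'_i$ and $y'_i \mapsto {\bf y}'_i$, the ideal $J(H)$ becomes an ideal $J' \subseteq R'$ with $J' \cdot S = J(G)$.

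The homological step then runs as in Theorem \ref{min resol2}. The ring $S$ is free, hence flat, over $R'$: blockwise $\Bbbk[x_{i1}, \ldots, x_{in_i}]$ is free over $\Bbbk[{\bf x}'_i]$ with basis the monomials having some exponent equal to $0$, and likewise on the $y$-side, and a tensor product over $\Bbbk$ of such free extensions is again free. Tensoring the minimal free resolution of $J'$ over $R'$ with the flat extension $S$ therefore yields a resolution of $J' \cdot S = J(G)$; its differentials are the images of the original ones, so every entry is $\pm {\bf x}'_s$ or $\pm {\bf y}'_s$, a non-unit of $S$, and minimality is preserved. This base change is precisely what the grading of Construction \ref{const3} records, in which $\deg(x'_i) = \deg(y'_i)$ is chosen to equal $\deg({\bf x}'_i) = \deg({\bf y}'_i)$. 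Matching the resulting bases ${\bf f}(C;\sigma)$ and differentials against Construction \ref{const3} finishes the argument.

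The main obstacle is the bijection of minimal vertex covers, in particular the all-or-nothing property on each block together with the verification that the compatibility constraints pass verbatim from $H$ to $G$ under the edge-to-$K_{n_i,n_i}$ replacement; this is where $(\ast)$ and the structure theory of very well-covered graphs from \cite{crt, kpty} enter. Once it is in place, the flatness and minimality arguments are routine and follow Theorem \ref{min resol2} essentially verbatim.
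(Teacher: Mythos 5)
Your proposal is correct, and it reaches the conclusion by a route that is more explicit than, and slightly different from, the paper's. The paper disposes of the theorem in one line --- ``replace $x_i$ with $x'_i$ and $y_i$ with $y'_i$ in the proof of \cite[Theorem 3.5]{cf}'' --- i.e.\ it asks the reader to re-run Crupi--Ficarra's Betti-splitting induction verbatim with the block variables, with Construction \ref{const3} recording the resulting regrading (this mirrors the regrading argument in the proof of Theorem \ref{min resol2}). You instead take \cite[Theorem 3.5]{cf} as a black box and transport its resolution by base change along the inclusion $R'=\Bbbk[{\bf x}'_1,\ldots,{\bf y}'_{d_0}]\subseteq S$, justifying exactness by freeness of $S$ over $R'$ and minimality by the fact that the differential entries map to non-units. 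The two combinatorial inputs you isolate --- that every minimal vertex cover of $G$ consists of exactly one full side of each $K_{n_i,n_i}$ (which, as you could note, follows most quickly from the very well-covered hypothesis: each block forces at least $n_i$ cover vertices and $|C|=\sum n_i$), and that the resulting bijection ${\rm Min}(H)\to{\rm Min}(G)$ identifies $J(G)$ with the substituted ideal $J'S$ --- are precisely what the paper's one-line proof leaves implicit, and what Theorem \ref{structure} and \cite{kpty} are implicitly invoked for. Your version has the advantage of being self-contained modulo the statement (not the proof) of \cite[Theorem 3.5]{cf}, and of making the minimality claim a visible one-step check rather than something buried in a re-run induction; the paper's version avoids having to set up the flat extension at the cost of asking the reader to verify that an entire external proof survives the substitution.
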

\begin{proof}
Simply replace $x_{i}$ with $x^{\prime}_{i}$, and $y_{i}$ with $y^{\prime}_{i}$ in the proof of \cite[Theorem 3.5]{cf}. 
\end{proof}

For a subset $B$ of $V(H)$, we set $N_{B}=\displaystyle\sum_{j}n_{j}$, where the sum runs over those integers $j$ such that $x^{\prime}_{j}\in B$ or $y^{\prime}_{j}\in B$. 

\begin{cor}
Let $G=H(n_{1},\ldots, n_{d_{0}})$ be a very well-covered graph. Then we have 
\begin{enumerate}
\item $\beta_{i,i+j}(J(G))=\displaystyle\sum_{C\in{\rm Min}(H)}|\{D\subset\mathcal{C}(H; C)\,\,: d+N_{D}=j \mbox{ and }|D|=i\}|$
\item ${\rm pd}(J(G))=\max\{|\mathcal{C}(H; C)|\,\,:C\in{\rm Min}(H)\}.$
\end{enumerate}
In particular, the graded Betti numbers of $J(G)$ do not depend upon the characteristic of the underlying field $\Bbbk$
\end{cor}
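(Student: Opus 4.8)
The plan is to read the graded Betti numbers directly off the minimal free resolution $(\mathbb{F},d_\bullet)$ of Construction \ref{const3}, whose minimality is guaranteed by Theorem \ref{min resol3}. Because the resolution is minimal, every differential has entries in the homogeneous maximal ideal, so for each pair $(i,j)$ the number $\beta_{i,j}(J(G))$ is simply the number of basis symbols of $F_i$ of internal degree $j$. Thus the entire corollary reduces to sorting the basis symbols ${\bf f}(C;\sigma)$ by homological degree and by internal degree, and to reading off the length of $\mathbb{F}$.

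First I would record the homological degree. The basis of $F_i$ consists of the symbols ${\bf f}(C;\sigma)$ with $C\in{\rm Min}(H)$ and $\sigma\in\left(\begin{smallmatrix}\mathcal{C}(H;C)\\ i\end{smallmatrix}\right)$, so a symbol lies in homological degree $i$ exactly when $|\sigma|=i$. Next I would compute the internal degree of ${\bf f}(C;\sigma)$, namely the standard $\mathbb{Z}$-degree of its multidegree monomial ${\bf z}_C{\bf x}'_\sigma$. Here one must pass from the fine multigrading used in Construction \ref{const3} (chosen only to force minimality) to the ordinary grading of $\Bbbk[V(G)]$, under which $x'_s$ stands for the product $x_{s1}\cdots x_{sn_s}$ and $y'_s$ for $y_{s1}\cdots y_{sn_s}$, each of degree $n_s$. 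Two degree computations are then needed: $\deg{\bf x}'_\sigma=\sum_{x'_s\in\sigma}n_s=N_\sigma$, and $\deg{\bf z}_C=d$. The latter I would justify from the structure of Cohen--Macaulay very well-covered graphs: since $H$ is unmixed on $2d_0$ vertices, every minimal vertex cover $C$ of $H$ contains exactly one endpoint of each edge $x_iy_i$, and under the blow-up $H\mapsto H(n_1,\ldots,n_{d_0})$ that endpoint contributes its $n_i$ copies, whence $\deg{\bf z}_C=\sum_{i=1}^{d_0}n_i=d$ for every $C$. Combining the two, ${\bf f}(C;\sigma)$ sits in homological degree $|\sigma|$ and internal degree $d+N_\sigma$; grouping the symbols accordingly over all $C\in{\rm Min}(H)$ and all $D=\sigma$ yields part (1).

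For part (2), the projective dimension is the length of $\mathbb{F}$, i.e. the largest $i$ with $F_i\neq0$. As $F_i\neq0$ precisely when some $\mathcal{C}(H;C)$ admits a subset of size $i$, this largest index equals $\max\{|\mathcal{C}(H;C)|\,\,:C\in{\rm Min}(H)\}$, giving part (2). The characteristic-independence is then immediate: Construction \ref{const3} and its minimality (Theorem \ref{min resol3}) hold over an arbitrary field $\Bbbk$, and the ranks and degrees of the $F_i$ are prescribed by the combinatorics of $H$ and the integers $n_1,\ldots,n_{d_0}$ alone, so the numbers $\beta_{i,j}(J(G))$ do not change with $\Bbbk$.

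The step I expect to be the main obstacle is precisely the internal-degree bookkeeping. Since the resolution is set up with a deliberately designed fine multigrading, one must carefully translate back to the standard grading and verify both that $\deg{\bf z}_C$ is the constant $d$ independently of the chosen cover $C$ and that $\deg{\bf x}'_\sigma$ genuinely varies with $\sigma$ through $N_\sigma$. This nonconstant jump in degree along the resolution is exactly what makes the resolution of $J(G)$ nonlinear in general, and getting it right is the crux of the degree formula in part (1).
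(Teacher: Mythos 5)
Your approach is exactly the paper's (implicit) one: Theorem \ref{min resol3} gives a minimal free resolution, so each graded Betti number is just the count of basis symbols ${\bf f}(C;\sigma)$ in the appropriate homological and internal degree, and your bookkeeping --- homological degree $|\sigma|$, $\deg {\bf z}_C=d$ via unmixedness of $H$ and the blow-up, $\deg {\bf x}'_{\sigma}=N_{\sigma}$ --- is correct, as are part (2) and the characteristic-independence. The one step you should not gloss over is the final assertion that this ``yields part (1)'': your (correct) computation places ${\bf f}(C;\sigma)$ in internal degree $d+N_{\sigma}$, hence contributes to $\beta_{|\sigma|,\,d+N_{\sigma}}(J(G))$, whereas the printed statement indexes by $\beta_{i,i+j}$ with $d+N_D=j$, i.e.\ internal degree $i+d+N_D$; these differ by $i$. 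The discrepancy is real, not a misreading: in the test case $n_1=\cdots=n_{d_0}=1$ one has $G=H$ and $N_D=|D|=i$, and the known linear resolution of the cover ideal of a Cohen--Macaulay very well-covered graph gives $\beta_{i,d+i}(J(H))=\sum_{C\in{\rm Min}(H)}\binom{|\mathcal{C}(H;C)|}{i}$, which matches your degree count but not the printed $\beta_{i,d+2i}$. So you should state explicitly that your argument proves
$\beta_{i,j}(J(G))=\sum_{C\in{\rm Min}(H)}|\{D\subset\mathcal{C}(H;C)\ :\ d+N_D=j\ \mbox{and}\ |D|=i\}|$
(equivalently $\beta_{i,i+j}$ with $d+N_D=i+j$), and note that the corollary as printed carries an index shift --- or, equivalently, that $N_D$ there should be $\sum_j (n_j-1)$ rather than $\sum_j n_j$.
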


\begin{cor}\label{local cohomology of very well}
Let $G=H(n_{1},\ldots, n_{d_{0}})$ be a very well-covered graph and let $S=\Bbbk[V(G)]$. Then, for a positive integer $j$, the Hilbert series of $H^{j}_{\mathfrak{m}}(S/I(G))$ is given by the following formula: 
\begin{align*}
&F(H^j_{\mathfrak{m}}(S/I(G)),t) \\[0.15cm]
& =\displaystyle\sum_\ell\dim_\Bbbk[H^j_{\mathfrak{m}}(S/I(G))]_{\ell} \ t^{\ell} \\[0.15cm]
& =\displaystyle\sum_{i}\displaystyle\sum_{C\in{\rm Min}(H)}|\{D\subset\mathcal{C}(H; C)\,\,: d+N_{D}=|V(G)|-j \mbox{ and }|D|=i\}|\left(\frac{t^{-1}}{1-t^{-1}}\right)^{j-i}.
\end{align*}
\end{cor}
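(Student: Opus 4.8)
The plan is to obtain the formula by feeding the graded Betti numbers of $J(G)$, which were just computed in the preceding corollary from the minimal free resolution of Theorem~\ref{min resol3}, into the local-cohomology dictionary of Corollary~\ref{hilb}. No homological input beyond this substitution is needed.

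Concretely, I would first fix the Alexander-duality setup. Since the edge ideal is the Stanley--Reisner ideal of the independence complex, $I(G)=I_{\Delta(G)}$, we have $\Bbbk[\Delta(G)]=S/I(G)$; and since the cover ideal is the Alexander dual of the edge ideal, $J(G)=I_{\Delta(G)^{\vee}}$. Because $G=H(n_{1},\ldots,n_{d_{0}})$ is very well-covered on $X_{[d]}\cup Y_{[d]}$, its vertex number is $n:=|V(G)|=2d$. Taking $\Delta=\Delta(G)$ in the third line of Corollary~\ref{hilb} then gives
$$F(H^{j}_{\mathfrak{m}}(S/I(G)),t)=\sum_{i}\beta_{i,\,|V(G)|-j+i}(J(G))\left(\frac{t^{-1}}{1-t^{-1}}\right)^{j-i},$$
so it remains only to identify the Betti numbers appearing on the right.

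Next I would read these off from the preceding corollary, which states that $\beta_{i,\,i+j'}(J(G))=\sum_{C\in{\rm Min}(H)}|\{D\subseteq\mathcal{C}(H;C):d+N_{D}=j',\ |D|=i\}|$. Writing the internal degree $|V(G)|-j+i$ in the form $i+j'$ forces $j'=|V(G)|-j$, so the condition $d+N_{D}=j'$ becomes exactly $d+N_{D}=|V(G)|-j$. Substituting this count into the displayed Hilbert-series expression yields the claimed formula verbatim; for $j$ outside the range where the cohomology is nonzero both sides vanish, so the identity holds for every positive integer $j$.

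The substitution itself is routine; the only point requiring care is the degree and index bookkeeping that makes the two inputs compatible. I would confirm that $J(G)=I_{\Delta(G)^{\vee}}$ so that Corollary~\ref{hilb} applies with $\Delta=\Delta(G)$, and that the relevant vertex count is $n=|V(G)|=2d$ rather than the vertex number $2d_{0}$ of the underlying Cohen--Macaulay graph $H$, keeping in mind $\sum_{i\in[d_{0}]}n_{i}=d$ from Theorem~\ref{structure}. I expect this reconciliation, together with tracking the multigraded-to-$\mathbb{Z}$-graded passage underlying the Betti formula of the preceding corollary (through which $N_{D}$ enters the internal degree), to be the main and essentially only obstacle; once it is settled, the corollary follows immediately from Corollary~\ref{hilb} and the preceding Betti computation.
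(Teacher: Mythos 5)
Your proposal is correct and is exactly the derivation the paper intends: the corollary follows by substituting the Betti numbers $\beta_{i,\,|V(G)|-j+i}(J(G))$ from the preceding corollary (obtained from the minimal free resolution of Theorem \ref{min resol3}) into the third line of Corollary \ref{hilb} with $\Delta=\Delta(G)$, $I_{\Delta^{\vee}}=J(G)$, and $n=|V(G)|=2d$. The degree bookkeeping $j'=|V(G)|-j$ that you identify is the only step, and you handle it correctly.
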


\begin{rem}
It is known that the Hilbert series of the local cohomology modules of $S/I(G)$ for arbitrary very well-covered graphs in \cite[Theorem 4.4]{kpty}. Although Corollary \ref{local cohomology of very well} is stated in a different form from \cite[Theorem 4.4]{kpty}, they are equivalent. 
\end{rem}


\section*{Acknowledgement}
The authors would like to express their gratitude to Tatsuya Kataoka for his helpful and valuable comments and for carefully reading this paper. 
 The research of Yuji Muta was partially supported by ohmoto-ikueikai and JST SPRING Japan Grant Number JPMJSP2126. 
 The research of Naoki Terai was partially supported by JSPS Grant-in Aid for Scientific Research (C) 24K06670.



\begin{thebibliography}{99}

\bibitem{ahh} A. Aramova, J. Herzog and T. Hibi, 
Squarefree lexsegment ideals, 
\textit{Math. Z.} \textbf{228} (1998), no. 2, 353--378.

\bibitem{bps} D. Bayer, I. Peeva and B. Sturmfels, 
Monomial resolutions, 
\textit{Math. Res. Lett.} \textbf{5} (1998), 31--46.

\bibitem{bs} D. Bayer and B. Sturmfels, 
Cellular resolutions of monomial modules, 
\textit{J. Reine Angew. Math.} \textbf{502} (1998), 123--140.

\bibitem {bw} A. Bj\"{o}rner, M.L. Wachs, 
Shellable nonpure complexes and posets II, 
{\it Trans. Amer. Math. Soc.} {\bf 349} (1997), no. 10, 3945--3975. 

\bibitem{bh} W. Bruns and J. Herzog, 
\textit{Cohen--Macaulay Rings}, 
Cambridge Studies in Advanced Mathematics, vol. 39, Cambridge Univ. Press, Cambridge, 1993.

\bibitem{cw} K. Cameron and T. Walker, 
The graphs with maximum induced matching and maximum matching the same size, 
\textit{Discrete Math.} \textbf{299} (2005), 49--55.

\bibitem{c} R. Chen, 
Minimal free resolutions of linear edge ideals, 
\textit{J. Algebra} \textbf{324} (2010), no. 12, 3591--3613.

\bibitem{cn} D. Cook II and U. Nagel, 
Cohen--Macaulay graphs and face vectors of flag complexes, 
\textit{SIAM J. Discrete Math.} \textbf{26} (2012), 89--101.

\bibitem{cf} M. Crupi and A. Ficarra, 
Very well-covered graphs by Betti splittings, 
\textit{J. Algebra} \textbf{629} (2023), 76--108.

\bibitem{crt} M. Crupi, G. Rinaldo and N. Terai, 
Cohen--Macaulay edge ideal whose height is half of the number of vertices, 
\textit{Nagoya Math. J.} \textbf{201} (2011), 117--131.

\bibitem{ds} H. Dao and J. Schweig, 
Projective dimension, graph domination parameters, and independence complex homology, 
\textit{J. Combin. Theory Ser. A} \textbf{116} (2009), 180--197.

\bibitem{j} J. A. Eagon, 
Partially split double complexes with an associated Wall complex and applications to ideals generated by monomials, 
\textit{J. Algebra} \textbf{135} (1990), no. 2, 344--362.

\bibitem{er} J. A. Eagon and V. Reiner, 
Resolutions of Stanley--Reisner rings and Alexander duality, 
\textit{J. Pure Appl. Algebra} \textbf{130} (1998), 265--275.

\bibitem{ek} S. Eliahou and M. Kervaire, 
Minimal resolutions of some monomial ideals, 
\textit{J. Algebra} \textbf{129} (1990), 1--25.

\bibitem{hh} J. Herzog and T. Hibi, 
Distributive lattices, bipartite graphs and Alexander duality, 
\textit{J. Algebraic Combin.} \textbf{22} (2005), 289--302.

\bibitem{hhz} J. Herzog, T. Hibi and X. Zheng, 
Cohen--Macaulay chordal graphs, 
\textit{J. Combin. Theory Ser. A} \textbf{113} (2006), 911--916.

\bibitem{hhko} T. Hibi, A. Higashitani, K. Kimura and A. B. O'Keefe, 
Algebraic study on Cameron--Walker graphs, 
\textit{J. Algebra} \textbf{422} (2015), 257--269.

\bibitem{hp} D. T. Hoang and M. H. Pham, 
The size of Betti tables of edge ideals of clique corona graphs, 
\textit{Arch. Math. (Basel)} \textbf{118} (2022), 577--586.

\bibitem{h} M. Hochster, 
Cohen--Macaulay rings, combinatorics and simplicial complexes, in: B. R. McDonald and R. A. Morris (eds.), \textit{Ring Theory II}, Lecture Notes in Pure and Appl. Math., vol. 26, Marcel Dekker, New York, 1977, pp. 171--223.

\bibitem{h1} N. Horwitz, 
Linear resolutions of quadratic monomial ideals, 
\textit{J. Algebra} \textbf{318} (2007), no.~2, 981--1001.

\bibitem{k} M. Katzman, 
Characteristic-independence of Betti numbers of graph ideals, 
\textit{J. Combin. Theory Ser. A} \textbf{113} (2006), 435--454.

\bibitem{kty} K. Kimura, N. Terai and S. Yassemi, 
The projective dimension of the edge ideal of a very well-covered graph, 
\textit{Nagoya Math. J.} \textbf{230} (2018), 160--179.

\bibitem{kpty} K. Kimura, M. R. Pournaki, N. Terai and S. Yassemi, 
Very well-covered graphs and local cohomology of their residue rings by the edge ideals, 
\textit{J. Algebra} \textbf{606} (2022), 1--18.

\bibitem{lj} E. Lashani and A. S. Jahan, 
Regularity and projective dimension of some class of well-covered graphs, 
\textit{Turk. J. Math.} \textbf{40} (2016), 1102--1109.

\bibitem{mm} F. Mohammadi and S. Moradi, 
Resolution of unmixed bipartite graphs, 
\textit{Bull. Korean Math. Soc.} \textbf{52} (2015), 977--986.

\bibitem{mpt} Y. Muta, M. R. Pournaki and N. Terai, 
A local cohomological viewpoint on edge rings associated with multi-whisker graphs, 
\textit{Comm. Algebra} \textbf{53} (2025), no. 5, 1856--1865.

\bibitem{s0} R. P. Stanley, 
Hilbert functions of graded algebras, 
\textit{Adv. Math.} \textbf{28} (1978), 57--83.

\bibitem{s1} R. P. Stanley, 
\textit{Combinatorics and Commutative Algebra}, 
2nd ed., Progress in Mathematics, vol.~41, Birkhäuser Boston, Boston, MA, 1996.

\bibitem{t} N. Terai, 
Alexander duality theorem and Stanley--Reisner rings, 
\textit{Sūrikaisekikenkyūsho Kōkyūroku} \textbf{1078} (1999), 174--184.

\bibitem{v0} A. Van Tuyl, 
A beginner's guide to edge and cover ideals, in: 
\textit{Monomial Ideals, Computations and Applications}, Lecture Notes in Math., vol.~2083, Springer, Heidelberg, 2013, pp.~63--94.

\bibitem{v1} R. H. Villarreal, 
Cohen--Macaulay graphs, 
\textit{Manuscripta Math.} \textbf{66} (1990), no. 3, 277--293.

\bibitem{v2} R. H. Villarreal, 
\textit{Monomial Algebras}, 
2nd ed., Monographs and Research Notes in Mathematics, CRC Press, Boca Raton, FL, 2015.

\bibitem{y} S. Yuzvinsky, 
Taylor and minimal resolutions of homogeneous polynomial ideals, 
\textit{Math. Res. Lett.} \textbf{6} (1999), no. 5–6, 779--793.


\end{thebibliography}
\end{document}